\documentclass{amsart}
\usepackage[english]{babel}
\usepackage{csquotes}
\usepackage[letterpaper,top=2cm,bottom=2cm,left=3cm,right=3cm,marginparwidth=1.75cm]{geometry}
\usepackage{tikz}
\usepackage{quiver}
\usepackage{xparse}
\usepackage{enumitem}
\usepackage{anindex}
\anindexsetup{
   heading,
   lines,
     title = {Index of Notation}
}
\usepackage[maxbibnames=99,backend=biber,url=false, style=alphabetic, doi=true,eprint=false]{biblatex}
\renewbibmacro*{url+urldate}{}
\AtEveryBibitem{
  \clearfield{isbn}
  \clearlist{location}
  \clearfield{month}
  \clearfield{day}
  \clearfield{series}
  \clearfield{pages}
  \clearlist{language}
  \clearlist{translator}
  \clearfield{urldate}
    \clearfield{pubstate}
    \clearfield{url}
\renewbibmacro{in:}{}
}
\usepackage{subcaption}
\usetikzlibrary{shapes}
\usetikzlibrary{backgrounds}
\usetikzlibrary{decorations,decorations.pathreplacing,decorations.markings}
\usetikzlibrary{fit,calc,through}
\usetikzlibrary{external}
\usetikzlibrary{positioning}
\tikzstyle{mid>}=[decoration={markings, mark=at position 0.5 with {\arrow{>}}}, postaction={decorate}]
\tikzstyle{mid<}=[decoration={markings, mark=at position 0.5 with {\arrow{<}}}, postaction={decorate}]
\tikzstyle{upper>}=[decoration={markings, mark=at position 0.8 with {\arrow{>}}}, postaction={decorate}]
\tikzstyle{upper<}=[decoration={markings, mark=at position 0.8 with {\arrow{<}}}, postaction={decorate}]
\tikzstyle{lower<}=[decoration={markings, mark=at position 0.25 with {\arrow{<}}}, postaction={decorate}]
\tikzstyle{lower>}=[decoration={markings, mark=at position 0.25 with {\arrow{>}}}, postaction={decorate}]
\tikzstyle{rung}=[blue,dash pattern=on 5pt off 1pt on 1pt off 1pt]
\tikzstyle{rung>}=[blue,dash pattern=on 5pt off 1pt on 1pt off 1pt, decoration={markings, mark=at position 0.5 with {\arrow{>}}}, postaction={decorate}]
\tikzstyle{rung<}=[blue,dash pattern=on 5pt off 1pt on 1pt off 1pt, decoration={markings, mark=at position 0.5 with {\arrow{<}}}, postaction={decorate}]

\usepackage{environ}
\usepackage{xargs}
\newcommandx{\NewEnvironx}[5][2,3]{\expandafter\newcommandx\csname start#1\endcsname[#2][#3]{#4}\NewEnviron{#1}{\csname start#1\expandafter\endcsname\BODY #5}}
\newcommand{\ladderX}{1.5}
\newcommand{\ladderY}{1.5}
\newcommand{\ladderR}{0.6}
\newcommand{\laddercoordinates}[2]{
\foreach \x in {0,...,#1} {
	\foreach \y in {0,...,#2} {
		\coordinate (l\x\y) at (\x * \ladderX, \y * \ladderY);
		\coordinate (u\x\y) at ($(l\x\y)+\ladderR*(0,\ladderY)$);
		\coordinate (d\x\y) at ($(l\x\y)+(0,\ladderY)-\ladderR*(0,\ladderY)$);
	}
}
}
\newcommand{\ladderEn}[5]{
\draw[mid>] (l#1#2) -- (d#1#2);
\draw[mid>] (d#1#2) -- ($(l#1#2)+(0,\ladderY)$) node[left] {#3};
\draw[mid>] ($(l#1#2)+(\ladderX,0)$) -- ($(u#1#2)+(\ladderX,0)$);
\draw[mid>] ($(u#1#2)+(\ladderX,0)$) -- ($(l#1#2)+(\ladderX,\ladderY)$) node[right] {#4};
\draw[mid>] (d#1#2) --node[above]{#5} ($(u#1#2)+(\ladderX,0)$);
}

\newcommand{\ladderEd}[7]{
\draw[#6] (l#1#2) -- (d#1#2);
\draw[#6] (d#1#2) -- ($(l#1#2)+(0,\ladderY)$) node[left] {#3};
\draw[#7] ($(l#1#2)+(\ladderX,0)$) -- ($(u#1#2)+(\ladderX,0)$);
\draw[#7] ($(u#1#2)+(\ladderX,0)$) -- ($(l#1#2)+(\ladderX,\ladderY)$) node[right] {#4};
\draw[mid>] (d#1#2) --node[above]{#5} ($(u#1#2)+(\ladderX,0)$);
}
\newcommand{\TladderEd}[7]{
\draw[#6,very thick] (l#1#2) -- (d#1#2);
\draw[#6,very thick] (d#1#2) -- ($(l#1#2)+(0,\ladderY)$) node[left] {#3};
\draw[#7,very thick] ($(l#1#2)+(\ladderX,0)$) -- ($(u#1#2)+(\ladderX,0)$);
\draw[#7,very thick] ($(u#1#2)+(\ladderX,0)$) -- ($(l#1#2)+(\ladderX,\ladderY)$) node[right] {#4};
\draw[mid>] (d#1#2) --node[above]{#5} ($(u#1#2)+(\ladderX,0)$);
}

\newcommand{\ladderFn}[5]{
\draw[mid>] (l#1#2) -- (u#1#2);
\draw[mid>] (u#1#2) -- ($(l#1#2)+(0,\ladderY)$) node[left] {#3};
\draw[mid>] ($(l#1#2)+(\ladderX,0)$) -- ($(d#1#2)+(\ladderX,0)$);
\draw[mid>] ($(d#1#2)+(\ladderX,0)$) -- ($(l#1#2)+(\ladderX,\ladderY)$) node[right] {#4};
\draw[mid>] ($(d#1#2)+(\ladderX,0)$) --node[above]{#5} (u#1#2);
}

\newcommand{\ladderFd}[7]{
\draw[#6] (l#1#2) -- (u#1#2);
\draw[#6] (u#1#2) -- ($(l#1#2)+(0,\ladderY)$) node[left] {#3};
\draw[#7] ($(l#1#2)+(\ladderX,0)$) -- ($(d#1#2)+(\ladderX,0)$);
\draw[#7] ($(d#1#2)+(\ladderX,0)$) -- ($(l#1#2)+(\ladderX,\ladderY)$) node[right] {#4};
\draw[mid>] ($(d#1#2)+(\ladderX,0)$) --node[above]{#5} (u#1#2);
}
\newcommand{\TladderFd}[7]{
\draw[#6,very thick] (l#1#2) -- (u#1#2);
\draw[#6,very thick] (u#1#2) -- ($(l#1#2)+(0,\ladderY)$) node[left] {#3};
\draw[#7,very thick] ($(l#1#2)+(\ladderX,0)$) -- ($(d#1#2)+(\ladderX,0)$);
\draw[#7,very thick] ($(d#1#2)+(\ladderX,0)$) -- ($(l#1#2)+(\ladderX,\ladderY)$) node[right] {#4};
\draw[mid>] ($(d#1#2)+(\ladderX,0)$) --node[above]{#5} (u#1#2);
}

\newcommand{\ladderIn}[3]{\draw[mid>] (l#1#2) -- +($#3*(0,\ladderY)$);}
\newcommand{\ladderI}[2]{\ladderIn{#1}{#2}{1}}
\NewEnvironx{ladder}[2]{\begin{tikzpicture}[baseline=13*\ladderY*#2]\laddercoordinates{#1}{#2}
  \end{tikzpicture}}

\def\semicolon{;}
\def\applytolist#1{
    \expandafter\def\csname multi#1\endcsname##1{
        \def\omultiack{##1}\ifx\omultiack\semicolon
            \def\next{\relax}
        \else
            \csname #1\endcsname{##1}
            \def\next{\csname multi#1\endcsname}
        \fi
        \next}
    \csname multi#1\endcsname}
\def\calc#1{\expandafter\def\csname c#1\endcsname{{\mathcal #1}}}
\applytolist{calc}QWERTYUIOPLKJHGFDSAZXCVBNM;

\newcommand{\qBinomial}[3][q]{\genfrac{[}{]}{0pt}{}{#2}{#3}_{#1}}

\newcommand{\braidmodule}[2]{\begin{scope}[shift={(0,#1)}]
    \def\spacing{2} \def\totalstrands{4} \def\braidindex{#2} \foreach \j in {1,...,\totalstrands} {\ifnum\j=\braidindex
\else
        \ifnum\j=\numexpr\braidindex+1\relax
\else
          \draw[very thick] (\j*\spacing,0) -- (\j*\spacing,2);
        \fi
      \fi
    }\draw[very thick] ({\braidindex*\spacing},2) ..
      controls ({\braidindex*\spacing},1.5) and ({(\braidindex+1)*\spacing},0.5) ..
      ({(\braidindex+1)*\spacing},0); 
\pgfmathsetmacro{\xcross}{(\braidindex+0.5)*\spacing}
    \pgfmathsetmacro{\ycross}{1}
    \fill[white] (\xcross,\ycross) circle (0.1);

      \draw[very thick] ({(\braidindex+1)*\spacing},2) ..
      controls ({(\braidindex+1)*\spacing},1.5) and ({\braidindex*\spacing},0.5) ..
      ({\braidindex*\spacing},0);
  \end{scope}}

\newcommand{\braidmoduletwo}[2]{\begin{scope}[shift={(0,#1)}]
    \def\spacing{2} \def\totalstrands{2} \def\braidindex{#2} \foreach \j in {1,...,\totalstrands} {\ifnum\j=\braidindex
\else
        \ifnum\j=\numexpr\braidindex+1\relax
\else
          \draw[very thick] (\j*\spacing,0) -- (\j*\spacing,2);
        \fi
      \fi
    }\draw[very thick] ({\braidindex*\spacing},2) ..
      controls ({\braidindex*\spacing},1.5) and ({(\braidindex+1)*\spacing},0.5) ..
      ({(\braidindex+1)*\spacing},0); 
\pgfmathsetmacro{\xcross}{(\braidindex+0.5)*\spacing}
    \pgfmathsetmacro{\ycross}{1}
    \fill[white] (\xcross,\ycross) circle (0.1);

      \draw[very thick] ({(\braidindex+1)*\spacing},2) ..
      controls ({(\braidindex+1)*\spacing},1.5) and ({\braidindex*\spacing},0.5) ..
      ({\braidindex*\spacing},0);
  \end{scope}}

\newcommand{\braidmoduletwoI}[2]{\begin{scope}[shift={(0,#1)}]
    \def\spacing{2} \def\totalstrands{2} \def\braidindex{#2} \foreach \j in {1,...,\totalstrands} {\ifnum\j=\braidindex
\else
        \ifnum\j=\numexpr\braidindex+1\relax
\else
          \draw[very thick] (\j*\spacing,0) -- (\j*\spacing,2);
        \fi
      \fi
    }\draw[very thick] ({(\braidindex+1)*\spacing},2) ..
      controls ({(\braidindex+1)*\spacing},1.5) and ({\braidindex*\spacing},0.5) ..
      ({\braidindex*\spacing},0); 
\pgfmathsetmacro{\xcross}{(\braidindex+0.5)*\spacing}
    \pgfmathsetmacro{\ycross}{1}
    \fill[white] (\xcross,\ycross) circle (0.1);
\draw[very thick] ({\braidindex*\spacing},2) ..
      controls ({\braidindex*\spacing},1.5) and ({(\braidindex+1)*\spacing},0.5) ..
      ({(\braidindex+1)*\spacing},0);
  \end{scope}}

\newcommand{\braidmoduleI}[2]{\begin{scope}[shift={(0,#1)}]
    \def\spacing{2} \def\totalstrands{4} \def\braidindex{#2} \foreach \j in {1,...,\totalstrands} {\ifnum\j=\braidindex
\else
        \ifnum\j=\numexpr\braidindex+1\relax
\else
          \draw[very thick] (\j*\spacing,0) -- (\j*\spacing,2);
        \fi
      \fi
    }\draw[very thick] ({(\braidindex+1)*\spacing},2) ..
      controls ({(\braidindex+1)*\spacing},1.5) and ({\braidindex*\spacing},0.5) ..
      ({\braidindex*\spacing},0); 
\pgfmathsetmacro{\xcross}{(\braidindex+0.5)*\spacing}
    \pgfmathsetmacro{\ycross}{1}
    \fill[white] (\xcross,\ycross) circle (0.1);
\draw[very thick] ({\braidindex*\spacing},2) ..
      controls ({\braidindex*\spacing},1.5) and ({(\braidindex+1)*\spacing},0.5) ..
      ({(\braidindex+1)*\spacing},0);
  \end{scope}}

\newcommand{\abraidmodule}[2]{\begin{scope}[shift={(0,#1)}]
    \def\spacing{2} \def\totalstrands{4} \def\braidindex{#2} \foreach \j in {1,...,\totalstrands} {\ifnum\j=\braidindex
\else
        \ifnum\j=\numexpr\braidindex+1\relax
\else
          \draw[very thick] (\j*\spacing,0) -- (\j*\spacing,2);
        \fi
      \fi
    }\draw[very thick] ({(\braidindex+1)*\spacing},2) ..
      controls ({(\braidindex+1)*\spacing},1.5) and ({\braidindex*\spacing},0.5) ..
      ({\braidindex*\spacing},0);
\pgfmathsetmacro{\xcross}{(\braidindex+0.5)*\spacing}
    \pgfmathsetmacro{\ycross}{1}
    \fill[white] (\xcross,\ycross) circle (0.1);
\draw[very thick] ({\braidindex*\spacing},2) ..
      controls ({\braidindex*\spacing},1.5) and ({(\braidindex+1)*\spacing},0.5) ..
      ({(\braidindex+1)*\spacing},0); 
  \end{scope}}

 \newcommand{\C}{\mathbb{C}}
\newcommand{\R}{\mathbb{R}}
\newcommand{\Z}{\mathbb{Z}}

\newcommand{\diag}{\operatorname{diag}}
\newcommand{\freeA}{\mathcal{F}}

\usepackage{amsmath,amsfonts,amssymb,xcolor}
\usepackage{graphicx}
\usepackage[colorlinks=true, allcolors=blue]{hyperref}
\usepackage{zref-clever}
\usepackage{aliascnt}
\zcsetup{cap}
\providecommand{\cref}[1]{\zcref{#1}}
\providecommand{\Cref}[1]{\zcref{#1}}
\providecommand{\crefrange}[2]{\zcref{#1,#2}}

\zcRefTypeSetup{equation}{
  name-sg=,
  Name-sg=,
  name-pl=,
  Name-pl=
}

\NewDocumentCommand{\NewTheoremWithZref}{ m o m m }{\IfNoValueTF{#2}{\newtheorem{#1}{#3}\zcRefTypeSetup{#1}{name-sg=#3, Name-sg=#3, name-pl=#4, Name-pl=#4}}{\newaliascnt{#1}{#2}\newtheorem{#1}[#1]{#3}\aliascntresetthe{#1}\zcRefTypeSetup{#1}{name-sg=#3, Name-sg=#3, name-pl=#4, Name-pl=#4}}}

\numberwithin{equation}{section}

\newtheorem{itheorem}{Theorem}
\zcRefTypeSetup{itheorem}{
  name-sg=theorem,
  Name-sg=Theorem,
  name-pl=theorems,
  Name-pl=Theorems
}
\newtheorem{theorem}{Theorem}[section]

\NewTheoremWithZref{corollary}[theorem]{Corollary}{Corollaries}
\NewTheoremWithZref{lemma}[theorem]{Lemma}{Lemmas}
\NewTheoremWithZref{assumption}[theorem]{Assumption}{Assumptions}
\NewTheoremWithZref{definition}[theorem]{Definition}{Definitions}
\NewTheoremWithZref{ques}[theorem]{Question}{Questions}
\NewTheoremWithZref{conjecture}[theorem]{Conjecture}{Conjectures}
\NewTheoremWithZref{algorithm}[theorem]{Algorithm}{Algorithms}

\theoremstyle{remark}

\NewTheoremWithZref{remark}[theorem]{Remark}{Remarks}
\NewTheoremWithZref{examplex}{Example}{Examples}
\newcommand{\Spid}[1]{\mathsf{Sp}(#1)}

\newcommand{\SpidM}[1]{\mathsf{Sp}_M(#1)}

\newcommand{\tSpidM}[1]{\mathsf{\widetilde{Sp}}_M(#1)}
\newcommand{\Spidm}[2]{\mathsf{Sp}_{#1}(#2)}

\newcommand{\Spl}[2]{\mathsf{Spl}_{#1}(#2)}

\newcommand{\Spidp}[2]{\mathbf{Sp}_{#1}(#2)}
\newcommand{\Spidq}[2]{\mathsf{Sp}'_{#1}(#2)}
\newcommand{\cij}{c_{ij}}
\newcommand{\Hom}{\operatorname{Hom}}
\newcommand{\shift}{\Lambda}

\title[Knot contact homology as a planar limit of Chern--Simons theory]{Knot contact homology as \\
a planar limit of Chern--Simons theory}
\author{Ben Webster}
\author{Meri Zaimi}

\newcommand{\Ben}[1]{}
\newcommand{\Meri}[1]{}
\newcommand{\ourlambda}{\Lambda}
\newcommand{\nglambda}{\lambda}
\newcommand{\ourmu}{\nu}
\newcommand{\pverma}{\mathsf{M}}
\newcommand{\ngmu}{\mu}
\newcommand{\ngab}[1]{\operatorname{KCH}(#1)}
\newcommand{\Iq}[1]{I_q(#1)}
\newcommand{\Iqp}[1]{I_q'(#1)}

\newcommand{\ourabq}[1]{\mathsf{H}_q(#1)}
\newcommand{\ourab}[1]{\mathsf{H}(#1)}
\newcommand{\wri}{\operatorname{wr}}
\newcommand{\aij}{\gamma_{ij}}
\newcommand{\Br}{\mathbf{r}}

\newcommand{\Bs}{\mathbf{s}}
\newcommand{\Bw}{\mathbf{w}}
\newcommand{\funcs}[1]{\mathcal{F}_{#1}}
\newcommand{\qt}[1]{\mathcal{T}_{#1}}
\newcommand{\nus}[1]{\mathcal{N}_{#1}}
\newcommand{\ev}{\mathsf{ev}}
\newcommand{\Laur}[1]{\mathbb{T}_{#1}}
\newcommand{\Tor}{\operatorname{Tor}}

\begin{document}

\begin{abstract}

We prove a conjecture relating augmentation varieties to the large $N$ limit of Chern--Simons theory. Although our result does not directly establish that the augmentation polynomial of a knot is the classical limit of a deformed $\hat{A}$-polynomial---as suggested by Aganagi\'c and Vafa---it reduces that problem to characterizing certain algebraic properties of a module over the quantum torus introduced by Gaiotto, Kannagi, and Sanjurjo. We call this module the \emph{HOMFLYPT difference module}; it encodes relations among colored HOMFLYPT polynomials with different antisymmetric colorings. We show that, for a knot, the classical limit of this difference module is precisely degree 0 abelianized knot contact homology, and we give a natural extension of this result to links.

\end{abstract}
\maketitle
\section{Introduction}

In \cite{gaiottoDbranesPlanar2026}, Gaiotto, Kannagi, and Sanjurjo discuss applications of holographic duality between Chern--Simons theory and the A-model of topological string theory. The former has well-known applications in knot theory as the source of quantum invariants, such as the Jones polynomial; on the dual side, this should manifest as the study of open Gromov--Witten theory for the conormal to the knot in $T^*S^3$. This, in turn, has been related to knot contact homology by the work of Aganagi\'c, Ekholm, Ng, and Vafa \cite{vafaTopologicalStrings2014}. From a mathematical perspective, this leads to the conjecture that the three-variable augmentation variety of a link can be described in terms of a large $N$ limit of Chern--Simons theory for $\mathfrak{sl}_N$.

More precisely, by a special case of the work of Garoufalidis, Lauda, and L\^e \cite{garoufalidisColoredHOMFLYPT2018}, the HOMFLYPT polynomials of a link corresponding to the wedge powers $\bigwedge{}^{\! m}\C^{N}$ form a $q$-holonomic sequence in $m$. That is, the sequence satisfies a recursion relation in the $q$-commuting transformations $\ourlambda$ and $\ourmu$, where $\ourlambda$ sends $m \mapsto m-1$ and $\ourmu$ acts by multiplication by $q^{m}$. This relation is sometimes called the deformed $\hat{A}$-polynomial. 

Based on \cite{gaiottoDbranesPlanar2026}, we reinterpret this as a statement about a {\bf HOMFLYPT difference module}: a module over the quantum torus that provides a purely skein-theoretic description of these recursions by treating $m$ as a formal variable. While interpreting the polynomial as a function of $q$ for formal $m$ presents difficulties, one can work skein-theoretically to relate values where $m$ is shifted by an integer.

We illustrate this with an example based on \cite[(2.15) \& \S 2.3]{gaiottoDbranesPlanar2026}. Consider the unknot labeled with the representation $\bigwedge{}^{\! m}\C^{N}$. We can compare this with the value of the quantum knot invariant for $\bigwedge{}^{\! m-1}\C^{N}$ using relations between webs in MOY (Murakami--Ohtsuki--Yamada) calculus, a diagrammatic calculus for quantum $\mathfrak{gl}_N$ invariants that we review in detail in \cref{sec:spider}:
\[  \tikz[baseline]{\draw[->,very thick] (0,0.5) node[above] {$m$} arc (90:-315:0.5cm);} =\frac{q-q^{-1}}{q^{m}-q^{-m}} \,  \tikz[baseline]{\draw[->,very thick] (0,0.5)  arc (90:-315:0.5cm); \node at (.75,-.1){$m$}; \node at (-1,.1){$m-1$}; \draw[mid<,very thick] (0,0.5) -- (0,-.5);}=\frac{q^{N-m+1}-q^{-N+m-1}}{q^{m}-q^{-m}}\tikz[baseline]{\draw[->,very thick] (0,0.5) node[above] {$m-1$} arc (90:-315:0.5cm);}. \]
We can ``HOMFLYPTize'' this formula in the final step by substituting $g=q^N$ and $\ourmu=q^{m}$. This substitution is purely formal: while evaluating the HOMFLYPT polynomial at a non-integer value of $m$ is problematic, the skein relations themselves remain valid when we treat $\ourmu=q^m$ as an independent variable. The key insight is that the MOY relations relate diagrams with labels differing by integers, so we can work with formal shifts of $m$ without ever needing to evaluate at a specific value. This removes the explicit dependence on $N$, and now the equation only depends on $m$ through $\ourmu$.  This is effectively taking the planar limit of Chern--Simons theory, sending $N\to \infty$ and $q\to 1$. 

Finally, since $m$ is now purely formal, we can define a transformation $\ourlambda$\notation{$\ourlambda$}{The label-shift operator sending $m$ to $m-1$.} (the precise domain and codomain of which are discussed later) that sends the label $m$ to $m-1$. While this shift is not well-defined for fixed integers $m$, it is consistent for a formal variable. To preserve the MOY relations---specifically \cref{eq:bigon2}---the transformations $\ourlambda$ and $\ourmu$ must $q$-commute such that $\ourmu\ourlambda=q\ourlambda\ourmu$. Using this notation, we have:
\[  \tikz[baseline]{\draw[->,very thick] (0,0.5) node[above] {$m$} arc (90:-315:0.5cm);} =\frac{qg\ourmu^{-1} -q^{-1}g^{-1}\ourmu}{\ourmu-\ourmu^{-1}}\tikz[baseline]{\draw[->,very thick] (0,0.5) node[above] {$m-1$} arc (90:-315:0.5cm);} =\frac{qg\ourmu^{-1} -q^{-1}g^{-1}\ourmu}{\ourmu-\ourmu^{-1}}\ourlambda\, \tikz[baseline]{\draw[->,very thick] (0,0.5) node[above] {$m$} arc (90:-315:0.5cm);}\;. \]

Thus, we conclude that the expression $\ourmu-\ourmu^{-1}-\ourlambda(g\ourmu^{-1} -g^{-1}\ourmu)$ annihilates the value of the unknot. This is the deformed $\hat{A}$-polynomial of the unknot, up to several changes of convention. For instance, our results align with \cite[\S 6.1]{aganagicLargeDuality2012}, with minor differences due to their use of symmetric rather than antisymmetric powers; our variables $q, g, \ourmu, \ourlambda$ correspond to their $q^{1/2}, Q^{1/2}, e^{\hat{p}/2}, e^{\hat{x}}$, respectively. After setting $q=1$ and applying the substitution in \cref{eq:substitution}, this matches the augmentation polynomial of the unknot. Note that this derivation does not require an explicit calculation of the quantum invariant; in \cref{rem:difficulties}, we discuss the challenges of defining that invariant in the limits $N\to \infty$ and $q\to 1$. 

Fix a general link $L$. Consider the quantum torus 
\[ \qt{r} = \C[q^{\pm 1}, g^{\pm 1}] \langle \ourlambda_i, \ourmu_i \rangle / ( \ourlambda_j \ourmu_i = q^{-\delta_{ij}} \ourmu_i \ourlambda_j, \ourmu_i \ourmu_j = \ourmu_j \ourmu_i, \ourlambda_i \ourlambda_j = \ourlambda_j \ourlambda_i ) \]
whose number of variables $r$ is equal to the number of components of $L$; let $\nus{r}$ be the subalgebra generated by $q^{\pm 1}$ and $\ourmu_i$. Applying an argument analogous to that above, we obtain a module $\ourabq{L}$ over $\qt{r}$ whose elements are webs where the link is colored by formal parameters $m_i$ shifted by integers, with additional web portions colored by integers. We call this the \textbf{HOMFLYPT difference module} of $L$.

Let $\funcs{r}$ be the space of all maps $\Z^r \to \C(q)[g^{\pm 1}]$. This is a module over the quantum torus $\qt{r}$ by the action 
\[ \ourmu_i f(k_1, \dots, k_r) = q^{k_i} f(k_1, \dots, k_r), \qquad \ourlambda_i f(k_1, \dots, k_r) = f(k_1, \dots, k_i-1, \dots, k_r). \]
There is a natural $\qt{r}$-module homomorphism $\ev \colon \ourabq{L} \to \funcs{r}$ sending a web $w$ to the function $f_w(k_1, \dots, k_r)$ given by the evaluation of $w$ in the HOMFLYPT skein category after substituting $k_i$ for $m_i$. This provides a new perspective on finding recursions satisfied by antisymmetric HOMFLYPT polynomials by framing the problem as an investigation into the structure of the HOMFLYPT difference module $\ourabq{L}$. Let $[L]$ be the element of $\ourabq{L}$ consisting of the link $L$ colored by the parameters $m_i$.

\begin{definition}
    Let $\Iq{L} = \{ x \in \qt{r} \mid x[L] = 0 \}$ be the annihilator of $[L]$.
\end{definition}

Since the homomorphism $\ev$ maps $[L]$ to the colored HOMFLYPT polynomials of $L$ for all antisymmetric colorings (viewed as an element of $\funcs{r}$), all elements of the left ideal $\Iq{L}$ yield recurrence relations satisfied by these polynomials. We conjecture (\cref{conj:ev-injective}) that $\ev$ is injective and that $\Iq{L}$ coincides with the ideal $\Iqp{L}$ of recurrence relations satisfied by the antisymmetric HOMFLYPT polynomials of $L$.

The $q$-holonomicity theorem of \cite{garoufalidisColoredHOMFLYPT2018} shows that the variety defined by $\Iqp{L}$ in the classical limit $q \to 1$ is Lagrangian; accordingly, one would expect the same for $\Iq{L}$, suggesting that $\ourabq{L}$ is ``not too big.''

The advantage of this perspective is that, unlike its image under $\ev$, we can provide an explicit presentation of the module $\ourabq{L}$.

We can write $\ourabq{L}$ as a quotient of the tensor product of the quantum torus and enveloping algebra $U = \qt{r} \otimes U_q(\mathfrak{gl}_s)$ arising from a presentation of $L$ as the closure of a braid $\beta$ with $s$ strands. Specifically, it is a quotient of this algebra by the sum of a left and a right ideal---that is, a tensor product of cyclic left and right modules. The resulting presentation has a particularly simple classical limit as $q \to 1$, consistent with earlier discussions of the augmentation polynomial.

\begin{itheorem}\label{main-theorem-intro}
The base change $\ourab{L}$ of $\ourabq{L}$ to the locus where $q=1$ (keeping $\ourmu_i, \ourlambda_i$ as formal variables) is the degree 0 part of the abelianized knot contact homology $\ngab{L}$ under the parameter change:	
\begin{equation}\label{eq:substitution-intro}
		\ngmu \mapsto \ourmu^{-2}, \qquad U \mapsto g^{-2}, \qquad \nglambda \mapsto -g^{-1}\ourlambda^{-1}.
	\end{equation} 
In particular, the base change $I(L)$ of the ideal $\Iq{L}$ lies in the augmentation ideal $\mathsf{Au}_L$ of $L$, that is, the kernel of the natural map $\C[U^{\pm 1},\allowbreak \ngmu_i^{\pm 1},\allowbreak \lambda_i^{\pm 1}] \to \ngab{L}$.
\end{itheorem}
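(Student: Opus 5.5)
Write $L$ as the closure of a braid $\beta\in B_s$ and compare two explicit presentations. On the HOMFLYPT side, the paper's forthcoming presentation gives
\[
\ourabq{L}\cong \bigl(U/U\,\mathcal I_{\mathrm{left}}\bigr)\otimes_{U}\bigl(\mathcal I_{\mathrm{right}}\backslash U\bigr),\qquad U=\qt{r}\otimes U_q(\mathfrak{gl}_s),
\]
a tensor product of a cyclic right and a cyclic left module. Here one module comes from the ``cap'' webs closing the braid, where component $i$ carries label $m_i$ and the shifts $\ourlambda_i$, $\ourmu_i$ act. The other comes from the ``cup'' webs twisted by the braid action on $U_q(\mathfrak{gl}_s)$, i.e.\ the quantum Weyl group or $R$-matrix automorphism attached to $\beta$. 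On the contact homology side, I use Ng's combinatorial description of degree $0$ knot contact homology. It is generated by the $a_{ij}$ ($i\neq j$) modulo the relations
\[
\Lambda\cdot\Phi^L_\beta\cdot A=A=A\cdot\Phi^R_\beta\cdot\Lambda^{-1},
\]
where $\Phi^{L,R}_\beta$ are Ng's matrices built from the action $\phi_\beta$ on the tensor algebra. The abelianization $\ngab{L}$ is the corresponding commutative algebra over $\C[U^{\pm1},\ngmu_i^{\pm1},\nglambda_i^{\pm1}]$. This is again a tensor product of two cyclic modules, cut out by a left relation and a right relation.

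The key steps, in order, are as follows.

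\textbf{First,} take the $q=1$ base change of $U$. The torus $\qt{r}$ becomes the commutative Laurent ring, and $U_q(\mathfrak{gl}_s)$ becomes $U(\mathfrak{gl}_s)$. After specializing the formal labels, however, the relevant quotient is really a commutative coordinate ring. Concretely, I would identify the $q\to1$ image of the Chevalley-type generators $E_{ij}$ (produced as ladder rungs between strands $i$ and $j$) with Ng's $a_{ij}$, suitably rescaled. I would verify that the $q$-commutators $[E_{ij},E_{jk}]$ degenerate to commutative relations, using that in the planar limit the rungs of formal thickness commute up to factors involving $g,\ourmu$.

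\textbf{Second,} match the braid action. I would show that conjugation by the braid generator $\sigma_k$ on the generators, computed from the MOY square-switch and crossing relations with one formal and one integer label, specializes at $q=1$ to Ng's $\phi_{\sigma_k}$:
\[
a_{ki}\mapsto -a_{k+1,i}-a_{k+1,k}a_{ki},\qquad \text{etc.}
\]
This is checked after the substitution~\eqref{eq:substitution-intro}, $\ngmu\mapsto\ourmu^{-2}$, $U\mapsto g^{-2}$, $\nglambda\mapsto -g^{-1}\ourlambda^{-1}$. Because both sides are group actions of $B_s$, it suffices to check this on generators. This gives a matching of $\Phi^L_\beta,\Phi^R_\beta$ with the classical limits of the twisted cup and cap relations.

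\textbf{Third,} check the boundary relations. I would verify that the cap and cup relations, where $\ourlambda_i$ shifts the label $m_i$ and $\ourmu_i$ records $q^{m_i}$, become exactly $\Lambda\Phi^L_\beta A=A$ and $A\Phi^R_\beta\Lambda^{-1}=A$. The unknot computation in the introduction serves as a sanity check: it gives $\ourmu-\ourmu^{-1}-\ourlambda(g\ourmu^{-1}-g^{-1}\ourmu)$, whose substitution should reproduce the unknot's augmentation polynomial $U-\nglambda-\ngmu+\nglambda\ngmu$.

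\textbf{Finally,} deduce the ideal statement. Base change is right exact, so $\ourab{L}$ is the tensor product of the specialized cyclic modules, hence isomorphic to $\ngab{L}$. The image of $[L]$ is the unit. Therefore any $x\in\Iq{L}$ specializes to an element killing $1\in\ngab{L}$, which is exactly the augmentation ideal $\mathsf{Au}_L$.

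The main obstacle is the second step, namely signs and normalizations in the braid action. The $q\to1$ limit of the quantum Weyl group action on webs carries framing factors and powers of $g$, $\ourmu$, and these must conspire to give Ng's framing-dependent $\Lambda$ and the sign in $\nglambda\mapsto-g^{-1}\ourlambda^{-1}$. I would first do $s=2$ and the unknot or trefoil explicitly, to fix the rescaling of $a_{ij}$ by monomials in $\ourmu_i,\ourmu_j$. I would then prove the general statement by showing that both actions satisfy the same recursive formula under strand insertion.
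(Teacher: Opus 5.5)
Your overall plan is close to the paper's. You glue a cap module and a cup module along a slice of the braid closure, match Ng's $\phi_\beta$ with sliding light strands through the heavy braid, let $\ourlambda$ record label shifts, and use $[L]\mapsto 1$ for the ideal statement. As written, though, two steps fail.

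\textbf{Links and the ``degree $0$ part''.} For a link with $r>1$ components, Step~1 and your final conclusion $\ourab{L}\cong\ngab{L}$ both break down. Step~1 identifies the rung between strands $i$ and $j$ with $a_{ij}$. But if $i$ and $j$ lie on different components $c\neq c'$, the rung changes the heavy label at a single point of each of these closed components. Going once around $c$, the label then does not return to itself, so there is no consistently labeled diagram, and hence no element of $\ourab{L}$, corresponding to $a_{ij}$. Only monomials of total degree $0$ for the $\Z^r$-grading $\deg a_{ij}=e_{c(i)}-e_{c(j)}$ are realized. This is exactly the ``degree $0$ part'' in the statement, and your proposal never addresses it.

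The paper handles this in three ways. It defines the inverse map $Y$ by pairing basis vectors of ${}_{T_1}C'$ and $C'_{T_2}$ with matching labels; matching labels around each closed component forces degree $0$. It proves $Y$ is an isomorphism onto the degree-$0$ subspace only. It also changes the rescaling to $a_{ij}\mapsto(-g)^{d(i)-d(j)}\ourmu_{c(i)}\ourmu_{c(j)}^{-1}a_{ij}$, with one nontrivial diagonal entry of $\boldsymbol{\Lambda}'$ per component, built from that component's writhe.

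Nothing in your argument shows the other graded pieces vanish, so the full isomorphism you assert is unsupported. For a knot the grading is trivial, so there your conclusion is the right one. Your deduction $I(L)\subseteq\mathsf{Au}_L$ survives, since $[L]\mapsto 1$ lies in degree $0$.

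\textbf{The target of Step~3.} The relations \eqref{KCH2}--\eqref{KCH3} are $\mathbf{\hat{A}}=\boldsymbol{\Lambda}\boldsymbol{\Phi}^L_\beta\mathbf{A}$ and $\mathbf{A}=\mathbf{\hat{A}}\boldsymbol{\Phi}^R_\beta\boldsymbol{\Lambda}^{-1}$, where $\mathbf{\hat{A}}$ has $Ua_{ij}$ above the diagonal and $U-\ngmu$ on it. Your version, with $\mathbf{A}$ in place of $\mathbf{\hat{A}}$, is the $U=1$ specialization. For the unknot it gives $(\nglambda-1)(1-\ngmu)=0$, not the $U-\nglambda-\ngmu+\nglambda\ngmu$ of your own sanity check. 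So no matching with the $g$-dependent diagram side can succeed.

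The missing mechanism is that $\mathbf{A}_{ij}$ and $-g\mathbf{\hat{A}}_{ij}$ are $\ourmu^{-1}$ times the same rung with its right end curled in opposite directions, by the framing-change relations \eqref{eq:framingchange1}--\eqref{eq:framingchange2}. Sliding half of the light strand around the closure flips this curl, and that is where $\mathbf{\hat{A}}$ comes from. Separately, Ng's $\boldsymbol{\Lambda}$ differs from the factor $\ourlambda^{-1}\ourmu^{-2\wri(\beta)}(-g)^{\wri(\beta)}$ produced by the base point and the framing twists. This discrepancy is absorbed by $a_{ij}\mapsto(-g)^{d(i)-d(j)}a_{ij}$, indexed by position along the cycle of $\beta$, not by monomials in $\ourmu$.

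\textbf{The integral form in Step~1.} Specializing $U_q(\mathfrak{gl}_s)$ to $U(\mathfrak{gl}_s)$ uses the wrong integral form. There $[E_{ij},E_{jk}]=E_{ik}$ survives at $q=1$, and the cup relations \eqref{eq:cup-action} have poles. Commutativity comes from the lattice generated by $(q-q^{-1})E_{ij}$ and $(q-q^{-1})F_{ij}$, i.e.\ the blue rungs defining $\Spidq{\mathbf{m},\boldsymbol{\epsilon}}{N}$ and $C'$. Their commutators are divisible by $q-q^{-1}$; commutativity does not come from the heavy labels.

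A minor correction: Ng's formula is $a_{ki}\mapsto a_{k+1,i}-a_{k+1,k}a_{ki}$, without the sign on the first term.
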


We conjecture that $\ev$ is injective, so $\Iq{L}$ describes the recursions of the colored HOMFLYPT polynomials and that this base change is precisely the augmentation ideal of $L$ (see \cref{conj:augmentation-ideal}). Together, these conjectures would provide a rigorous proof of the conjecture in \cite{aganagicLargeDuality2012, vafaTopologicalStrings2014} that the augmentation ideal of $L$ is a classical limit of the recursions satisfied by colored HOMFLYPT polynomials.
    
We emphasize that the proof of \cref{main-theorem-intro} is purely algebraic. Compared to the difficulty of calculating knot invariants for specific values of $N$, our approach is relatively elementary and follows the spirit of \cite{garoufalidisColoredHOMFLYPT2018}: one derives recursions for quantum link invariants from MOY calculus and demonstrates, without explicit calculation of invariants, that in the limit $N \to \infty, q \to 1$, these relations recover the equations of the augmentation variety implicitly given in \cite[Th. 1.3]{ekholmFiltrationsKnot2013}.

Formalizing these arguments requires working with a limit of the representation category of $U_q(\mathfrak{gl}_N)$ where, in addition to standard wedge powers $\bigwedge{}^{\! m}\C^{N}$, we include objects where $m$ is treated as a formal variable. This limit appears worthy of further study, although it breaks certain structures of the category, such as its braided and rigid structures.

An index of notation is provided at the end of the paper for reference.

\subsection*{Acknowledgments}
Many thanks to Davide Gaiotto for exceptionally useful discussions; this paper owes a large debt to his generous sharing of his ideas.  We also thank Mina Aganagi\'c and Joel Kamnitzer for helpful comments when this paper was in the draft stage.

The authors were supported by the NSERC through a Discovery Grant and Postdoctoral Fellowship. This research was supported in part by Perimeter Institute for Theoretical Physics. Research at Perimeter Institute is supported in part by the Government of Canada through the Department of Innovation, Science and Economic Development Canada and by the Province of Ontario through the Ministry of Colleges and Universities.

\section{Heavy skein algebras}
\subsection{The HOMFLYPT spider}\label{sec:spider}

Our starting point is the ribbon category of representations of $U_q(\mathfrak{gl}_N)$. This is a braided spherical $\C[q,q^{-1}]$-linear category.  

However, in this paper, rather than considering $N$ as a fixed integer, we vary it, following the approach of Deligne categories. The key idea, going back to Deligne's work on interpolating representation categories \cite{Deligne2007}, is that many categorical constructions that appear to require $N$ to be a positive integer can be defined for formal values of $N$ by working with polynomial relations rather than explicit vector spaces. A version of the Deligne category for $U_q(\mathfrak{gl}_N)$ is discussed by Brundan in \cite[\S 1.4]{brundanRepresentationsOriented2017}, based on the oriented skein category.   We will use a variant of this approach based on the work of Murakami, Ohtsuki, and Yamada \cite{murakamiHOMFLYPolynomial1998}, developed further by Cautis, Kamnitzer, and Morrison \cite{cautisWebsQuantum2014} and Queffelec and Sartori \cite{queffelecMixedQuantum2019}, which gives a combinatorial description of this category in terms of {\it spiders}, certain planar diagrams on which we impose linear relations.  This makes it clear how the relations of the category depend on $N$.  In fact, we can remove the dependence on $N$ entirely by considering the power $g=q^N$ \notation{$g$}{The quantity $q^N$ where we think of $N$ as a variable.} as an independent variable---one can think of this as ``HOMFLYPTizing'' the spider category, since the resulting link invariants become functions of $q$ and $g$ independently, as in the HOMFLYPT polynomial. The resulting category is denoted $\mathsf{Sp}(N)$ in the notation of \cite[Def. 6.5]{queffelecMixedQuantum2019}.  

The variable $N$ only appears in the spider relations in quantum binomial coefficients of the form
\begin{equation}\label{eq:formal-q-binom}
 \qBinomial{N-a}{b}=\frac{(gq^{-a}-g^{-1}q^{a})\cdots (gq^{-a-b+1}-g^{-1}q^{a+b-1})}{(q^{b}-q^{-b})\cdots (q-q^{-1})}.	
\end{equation}
We will continue to use the symbol $\qBinomial{N-a}{b}$, but use it to mean the rational function in $g$ and $q$ given by \eqref{eq:formal-q-binom}.

The category $\mathsf{Sp}(N)$ is defined in terms of ladders:
\begin{definition}\label{def:ladder1}
	A ladder with $k$ uprights is a diagram drawn in a rectangle with  
	\begin{itemize}
		\item 
	$k$ parallel vertical lines that run from the bottom edge to the top edge of the rectangle, oriented upwards or downwards.  
	\item some number of oriented horizontal rungs connecting adjacent uprights,
	\item a labeling of each rung and each section of an upright by an element of $\Z_{\geq 0}$. The labeling must satisfy the condition that the signed sum of the segments at each vertex is zero.
	\end{itemize} 
\end{definition}

\begin{definition}\label{def:spider}
	The \textbf{(HOMFLYPT) spider category} $\Spid{N}$\notation{$\Spid{N}$}{The HOMFLYPT spider category.} is a braided monoidal category generated by objects $ \Z_{\geq 0}^+=\{n^+\mid n\in \Z_{\geq 0}\}$ and their duals $\Z_{\geq 0}^-=\{n^-\mid n\in \Z_{\geq 0}\}$. In the original representation-theoretic interpretation of this category, the objects $n^{\pm}$ corresponded to the $q$-deformation of $\bigwedge{}^{\!n}\C^N$ to a representation of $U_q(\mathfrak{gl}_N)$, and to its dual.

	Thus, a general object in this category is a finite list of these generators, which one should think of as corresponding to a tensor product of the wedge powers mentioned above.

The tensor product of two objects is given by concatenating their lists;  in particular, the empty list gives a unit object in the category. We include an isomorphism of $0^{\pm}$ to the empty list in our category.  
        
	The morphisms from $(a_1^{\epsilon_1},\dots, a_k^{\epsilon_k})$ to $(b_1^{\tau_1},\dots, b_{\ell}^{\tau_{\ell}})$  for $a_i,b_i\in\Z_{\geq 0}$ and $\epsilon_i,\tau_i\in \{+,-\}$ are generated by a fixed isomorphism $0^+\cong \emptyset$ and by ladders in $\R\times [0,1]$ that satisfy:
	\begin{enumerate}
		\item The points where the graph meets $\R\times \{0\}$ are labeled with $a_i$ read from left to right, with the edge oriented upward if $\epsilon_i= +$ and downward if $\epsilon_i = -$.  
		\item The points where the graph meets $\R\times \{1\}$ are labeled with $b_i$ read from left to right, with the edge oriented upward if $\tau_i= +$ and downward if $\tau_i = -$.  
	\end{enumerate}
    The tensor product of morphisms is as usual given by horizontal composition.  We can interpret an arbitrary morphism in this category as a trivalent graph, where applying the isomorphism $0^+\cong \emptyset$ to an upright of a ladder is represented by simply contracting that upright and smoothing out the trivalent vertex it arose from.  Conversely, we can ``ladderize'' any such trivalent graph to obtain a unique morphism, using uprights with the label $0^{\pm}$.  
    
The morphism space between two objects is spanned by formal linear combinations of such graphs over $\C(q)[g^{\pm 1}]$, considered up to isotopy of the corresponding graphs, modulo 
\begin{enumerate}
    \item the relations below,
    \item their mirror images and rotations, and
    \item the relations obtained by reversing the orientation of an upright in a ladder, and switching the label $k$ to $N-k$.  We include the two most important examples of the relations obtained this way in \cref{eq:bigon2,eq:commutation-2}.
\end{enumerate}
\begin{gather}
  \tikz[baseline]{\draw[->] (0,0.5) node[above] {$k$} arc (45:-315:0.5cm);}
  = \qBinomial{N}{k}
  \label{eq:loop}
  \\[1em]
  \begin{tikzpicture}[baseline=20]
    \foreach \n in {0,...,3} {
      \coordinate (z\n) at (0.4*\n, 0.8*\n);
    }
    \draw[mid>] (z0) -- node[right] {$k$}(z1);
    \draw[mid>] (z2) --  node[right] {$k$}(z3);
    \draw[mid>] (z1) to[out=150,in=-190] node[left] {$l$} (z2);
    \draw[mid>] (z1) to[out=-30,in=0]  node[right] {$k-l$}  (z2);
  \end{tikzpicture}
  = \qBinomial{k}{l}
  \tikz[baseline=20]{\draw[mid>] (0,0) -- node[right] {$k$} (1,2);}
  \qquad
  \begin{tikzpicture}[baseline=20]
    \foreach \n in {0,...,3} {
      \coordinate (z\n) at (0.4*\n, 0.8*\n);
    }
    \draw[mid>] (z0) -- node[right] {$k$} (z1);
    \draw[mid>] (z2) -- node[right] {$k$} (z3);
    \draw[mid<] (z1) to[out=150,in=-190] node[left] {$l$} (z2);
    \draw[mid>] (z1) to[out=-30,in=0]   node[right] {$k+l$} (z2);
  \end{tikzpicture}
  = \qBinomial{N-k}{l}
  \tikz[baseline=20]{\draw[mid>] (0,0) -- node[right] {$k$} (1,2);}
  \label{eq:bigon2}
  \displaybreak[1]
  \\[1em]
  \begin{tikzpicture}[baseline=20]
    \foreach \x/\y in {0/0,1/0,2/0,0/1,1/1,0/2} {
      \coordinate (z\x\y) at (\x+\y/2,\y/1.5);
    }
    \coordinate (z03) at (1,2);
    \draw[mid>] (z00) node[below] {$k$}      -- (z01);
    \draw[mid>] (z01)              -- node[left]  {$k+l$} (z02);
    \draw[mid>] (z10) node[below] {$l$}      -- (z01);
    \draw[mid>] (z20) node[below] {$m$}      -- (z02);
    \draw[mid>] (z02) -- node[left] {$k+l+m$} (z03);
  \end{tikzpicture}
  =
  \begin{tikzpicture}[baseline=20]
    \foreach \x/\y in {0/0,1/0,2/0,0/1,1/1,0/2} {
      \coordinate (z\x\y) at (\x+\y/2,\y/1.5);
    }
    \coordinate (z03) at (1,2);
    \draw[mid>] (z00) node[below] {$k$} -- (z02);
    \draw[mid>] (z10) node[below] {$l$} -- (z11);
    \draw[mid>] (z20) node[below] {$m$} -- (z11);
    \draw[mid>] (z11)              -- node[right] {$l+m$} (z02);
    \draw[mid>] (z02) -- node[left] {$k+l+m$} (z03);
  \end{tikzpicture}
  \label{eq:IH}
  \displaybreak[1]
  \\
  \label{eq:commutation}
  \begin{tikzpicture}[baseline=40]
    \laddercoordinates{1}{2}
    \node[left]  at (l00) {$k$};
    \node[right] at (l10) {$l$};
    \ladderEn{0}{0}{$k{-}s$}{$l{+}s$}{$s$}
    \ladderFn{0}{1}{$k{-}s{+}r$}{$l{+}s{-}r$}{$r$}
  \end{tikzpicture}
  = \sum_t \qBinomial{k-l+r-s}{t}
  \begin{tikzpicture}[baseline=40]
    \laddercoordinates{1}{2}
    \node[left]  at (l00) {$k$};
    \node[right] at (l10) {$l$};
    \ladderFn{0}{0}{$k{+}r{-}t$}{$l{-}r{+}t$}{$r{-}t$}
    \ladderEn{0}{1}{$k{-}s{+}r$}{$l{+}s{-}r$}{$s{-}t$}
  \end{tikzpicture}
  \displaybreak[1]
  \\
  \label{eq:commutation-2}
    \begin{tikzpicture}[baseline=40]
    \laddercoordinates{1}{2}
    \node[left]  at (l00) {$k$};
    \node[right] at (l10) {$l$};
    \ladderEd{0}{0}{$k{+}s$}{$l{+}s$}{$s$}{mid<}{mid>}
    \ladderFd{0}{1}{$k{+}s{-}r$}{$l{+}s{-}r$}{$r$}{mid<}{mid>}
  \end{tikzpicture}
  = \sum_t \qBinomial{N-k-l+r-s}{t}
  \begin{tikzpicture}[baseline=40]
    \laddercoordinates{1}{2}
    \node[left]  at (l00) {$k$};
    \node[right] at (l10) {$l$};
    \ladderFd{0}{0}{$k{-}r{+}t$}{$l{-}r{+}t$}{$r{-}t$}{mid<}{mid>}
    \ladderEd{0}{1}{$k{+}s{-}r$}{$l{+}s{-}r$}{$s{-}t$}{mid<}{mid>}
  \end{tikzpicture}
  \displaybreak[1]
  \\
    \label{eq:id1b}
  \tikz[baseline=40]{
    \laddercoordinates{1}{2}
    \ladderEn{0}{0}{$k-s$}{$l+s$}{$s$}
    \ladderEn{0}{1}{$k-s-r$}{$l+s+r$}{$r$}
    \node[left]  at (l00) {$k$};
    \node[right] at (l10) {$l$};
  }
  =
  \qBinomial{r+s}{r}
  \tikz[baseline=20]{
    \laddercoordinates{1}{1}
    \ladderEn{0}{0}{$k-s-r$}{$l+s+r$}{$r+s$}
    \node[left]  at (l00) {$k$};
    \node[right] at (l10) {$l$};
  }
  \displaybreak[1]
  \\
  \begin{tikzpicture}[baseline=32,yscale=.7]
  \laddercoordinates{2}{2}
\node[below] at (l00) {$k_1$};
\node[below] at (l10) {$k_2$};
\node[below] at (l20) {$k_3$};
\ladderEn{0}{0}{$k_1{-}r$}{$k_2{+}r$}{$r$}
\ladderFn{1}{1}{$k_2{+}r{+}s$}{$k_3{-}s$}{$s$}
\ladderIn{0}{1}{1}
\ladderIn{2}{0}{1}
\end{tikzpicture}
 =
\begin{tikzpicture}[baseline=32,yscale=.7]
\laddercoordinates{2}{2}
\node[below] at (l00) {$k_1$};
\node[below] at (l10) {$k_2$};
\node[below] at (l20) {$k_3$};
\ladderEn{0}{1}{$k_1{-}r$}{$k_2{+}r{+}s$}{$r$}
\ladderFn{1}{0}{$k_2{+}s$}{$k_3{-}s$}{$s$}
\ladderIn{0}{0}{1}
\ladderIn{2}{1}{1}
\end{tikzpicture}
\label{eq:IHlad}
\displaybreak[1] \\
\begin{tikzpicture}[baseline=32,yscale=.7]
\laddercoordinates{2}{2}
\node[below] at (l00) {$k_1$};
\node[below] at (l10) {$k_2$};
\node[below] at (l20) {$k_3$};
\ladderFn{0}{0}{$k_1{+}r$}{$k_2{-}r$}{$r$}
\ladderEn{1}{1}{$k_2{-}r{-}s$}{$k_3{+}s$}{$s$}
\ladderIn{0}{1}{1}
\ladderIn{2}{0}{1}
\end{tikzpicture}
 =
\begin{tikzpicture}[baseline=32,yscale=.7,xscale=.9]
\laddercoordinates{2}{2}
\node[below] at (l00) {$k_1$};
\node[below] at (l10) {$k_2$};
\node[below] at (l20) {$k_3$};
\ladderFn{0}{1}{$k_1{+}r$}{$k_2{-}r{-}s$}{$r$}
\ladderEn{1}{0}{$k_2{-}s$}{$k_3{+}s$}{$s$}
\ladderIn{0}{0}{1}
\ladderIn{2}{1}{1}
\end{tikzpicture}
\label{eq:IHlad2}\\
 \begin{tikzpicture}[baseline=45,yscale=.7,xscale=.9]
 \laddercoordinates{2}{3}
\ladderEn{0}{0}{}{}{$1$}
\ladderEn{0}{1}{}{}{$1$}
\ladderEn{1}{2}{}{}{$1$}
\ladderI{0}{2}
\ladderIn{2}{0}{2}
\node[below] at (l00) {$k_1$};
\node[below] at (l10) {$k_2$};
\node[below] at (l20) {$k_3$};
\node[below] at (u03) {$k_1-2$};
\node[below] at (u13) {$k_2+1$};
\node[below] at (u23) {$k_3+1$};
\end{tikzpicture}
- [2]_q
 \begin{tikzpicture}[baseline=45,yscale=.7,xscale=.9]\laddercoordinates{2}{3}
 \ladderEn{0}{0}{}{}{$1$}
\ladderEn{0}{2}{}{}{$1$}
\ladderEn{1}{1}{}{}{$1$}
\ladderI{0}{1}
\ladderI{2}{0}
\ladderI{2}{2}
\node[below] at (l00) {$k_1$};
\node[below] at (l10) {$k_2$};
\node[below] at (l20) {$k_3$};
\node[below] at (u03) {$k_1-2$};
\node[below] at (u13) {$k_2+1$};
\node[below] at (u23) {$k_3+1$};
\end{tikzpicture}
+
 \begin{tikzpicture}[baseline=45,yscale=.7,xscale=.9]\laddercoordinates{2}{3}
\ladderEn{1}{0}{}{}{$1$}
\ladderEn{0}{1}{}{}{$1$}
\ladderEn{0}{2}{}{}{$1$}
\ladderI{0}{0}
\ladderIn{2}{1}{2}
\node[below] at (l00) {$k_1$};
\node[below] at (l10) {$k_2$};
\node[below] at (l20) {$k_3$};
\node[below] at (u03) {$k_1-2$};
\node[below] at (u13) {$k_2+1$};
\node[below] at (u23) {$k_3+1$};
\end{tikzpicture}
= 0\label{eq:serre1}
\displaybreak[1] \\
\begin{tikzpicture}[baseline=45,yscale=.7,xscale=.9]\laddercoordinates{2}{3}
\ladderEn{1}{0}{}{}{$1$}
\ladderEn{1}{1}{}{}{$1$}
\ladderEn{0}{2}{}{}{$1$}
\ladderI{2}{2}
\ladderIn{0}{0}{2}
\node[below] at (l00) {$k_1$};
\node[below] at (l10) {$k_2$};
\node[below] at (l20) {$k_3$};
\node[below] at (u03) {$k_1-1$};
\node[below] at (u13) {$k_2-1$};
\node[below] at (u23) {$k_3+2$};
\end{tikzpicture}
- [2]_q
 \begin{tikzpicture}[baseline=45,yscale=.7,xscale=.9]\laddercoordinates{2}{3}
 \ladderEn{1}{0}{}{}{$1$}
\ladderEn{1}{2}{}{}{$1$}
\ladderEn{0}{1}{}{}{$1$}
\ladderI{2}{1}
\ladderI{0}{0}
\ladderI{0}{2}
\node[below] at (l00) {$k_1$};
\node[below] at (l10) {$k_2$};
\node[below] at (l20) {$k_3$};
\node[below] at (u03) {$k_1-1$};
\node[below] at (u13) {$k_2-1$};
\node[below] at (u23) {$k_3+2$};
\end{tikzpicture}
+
 \begin{tikzpicture}[baseline=45,yscale=.7,xscale=.9]\laddercoordinates{2}{3}
\ladderEn{0}{0}{}{}{$1$}
\ladderEn{1}{1}{}{}{$1$}
\ladderEn{1}{2}{}{}{$1$}
\ladderI{2}{0}
\ladderIn{0}{1}{2}
\node[below] at (l00) {$k_1$};
\node[below] at (l10) {$k_2$};
\node[below] at (l20) {$k_3$};
\node[below] at (u03) {$k_1-1$};
\node[below] at (u13) {$k_2-1$};
\node[below] at (u23) {$k_3+2$};
\end{tikzpicture}
= 0\label{eq:serre2}
\displaybreak[1] 
\end{gather}
We can define over and undercrossings in this category by the formulas below, making it into a braided monoidal category:
\begin{gather}
  \begin{tikzpicture}[baseline=10,xscale=.8]
    \node (C) at (-1,2) {$l$};
    \node (D) at (1,-1) {$l$};
    \node (A) at (1,2)  {$k$};
    \node (B) at (-1,-1){$k$};
    \node (i) at (intersection of A--B and D--C) {};
    \draw[mid>] (B) -- (A);
    \draw[mid>] (D) -- (i);
    \draw[mid>] (i) -- (C);
  \end{tikzpicture}
  = \sum_{\substack{a,b\geq 0 \\ b-a = k-l}} (-q)^{k-b}
  \begin{tikzpicture}[baseline=40]
    \laddercoordinates{1}{2}
    \node[left]  at (l00) {$k$};
    \node[right] at (l10) {$l$};
    \ladderEn{0}{0}{$k{-}b$}{$l{+}b$}{$b$}
    \ladderFn{0}{1}{$l$}{$k$}{$a$}
  \end{tikzpicture}
    = \sum_{\substack{a,b\geq 0 \\ b-a = l-k}} (-q)^{l-b}
  \begin{tikzpicture}[baseline=40]
    \laddercoordinates{1}{2}
    \node[left]  at (l00) {$k$};
    \node[right] at (l10) {$l$};
    \ladderFn{0}{0}{$k{+}b$}{$l{-}b$}{$b$}
    \ladderEn{0}{1}{$l$}{$k$}{$a$}
  \end{tikzpicture} \;,
  \label{eq:braid}
    \\
  \begin{tikzpicture}[baseline=10,xscale=.8]
    \node (C) at (-1,2) {$l$};
    \node (D) at (1,-1) {$l$};
    \node (A) at (1,2)  {$k$};
    \node (B) at (-1,-1){$k$};
    \node (i) at (intersection of A--B and D--C) {};
    \draw[mid>] (D) -- (C);
    \draw[mid>] (B) -- (i);
    \draw[mid>] (i) -- (A);
  \end{tikzpicture}
  = \sum_{\substack{a,b\geq 0 \\ b-a = k-l}} (-q)^{b-k}
  \begin{tikzpicture}[baseline=40]
    \laddercoordinates{1}{2}
    \node[left]  at (l00) {$k$};
    \node[right] at (l10) {$l$};
    \ladderEn{0}{0}{$k{-}b$}{$l{+}b$}{$b$}
    \ladderFn{0}{1}{$l$}{$k$}{$a$}
  \end{tikzpicture}  = \sum_{\substack{a,b\geq 0 \\ b-a = l-k}} (-q)^{b-l}
  \begin{tikzpicture}[baseline=40]
    \laddercoordinates{1}{2}
    \node[left]  at (l00) {$k$};
    \node[right] at (l10) {$l$};
    \ladderFn{0}{0}{$k{+}b$}{$l{-}b$}{$b$}
    \ladderEn{0}{1}{$l$}{$k$}{$a$}
  \end{tikzpicture} \;.
  \label{eq:braid-op}
\end{gather}
\end{definition}

The spider category is closely related to another diagrammatic category:
\begin{lemma}
	There is an inclusion functor $\mathcal{OS}(q^{-1}-q,-g)\to \Spid{N}$ and this functor induces an equivalence of Karoubi envelopes.
\end{lemma}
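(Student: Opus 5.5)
The plan is to build the functor on generators, check the defining relations, and then get the Karoubi equivalence from idempotents. Brundan's oriented skein category $\mathcal{OS}(z,t)$ is the strict monoidal category generated by objects $\uparrow,\downarrow$. Its morphisms are generated by a positive crossing of two upward strands together with cups and caps, subject to:
\begin{itemize}
\item the HOMFLYPT skein relation (crossing minus inverse crossing equals $z$ times the identity),
\item the curl relation (a positive curl equals $t$ times the identity),
\item the requirement that the sideways crossings are invertible,
\item the bubble relation (a counterclockwise circle equals $(t-t^{-1})/z$).
\end{itemize}

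\textbf{Step 1: define the functor.} Send $\uparrow\mapsto 1^+$ and $\downarrow\mapsto 1^-$. Send cups and caps to the corresponding $1$-labelled cups and caps in $\Spid{N}$. Send the crossing of two upward strands to the crossing \eqref{eq:braid} with $k=l=1$. Explicitly, this crossing is $-q\cdot\mathrm{id}+(\text{ladder through a }2\text{-labelled rung})$, possibly up to the sign and normalization dictated by the parameters $z=q^{-1}-q$ and $t=-g$.

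\textbf{Step 2: verify the relations.} This is a set of routine MOY computations using \eqref{eq:loop} and \eqref{eq:bigon2}.
\begin{itemize}
\item The skein relation follows from \eqref{eq:braid} and \eqref{eq:braid-op}: their difference is $(-q+q^{-1})$ times the identity, since the two expansions differ only in the identity coefficient.
\item The bubble relation follows from \eqref{eq:loop}: a $1$-labelled loop gives $\qBinomial{N}{1}=(g-g^{-1})/(q-q^{-1})$. This equals $(t-t^{-1})/z$ for $t=-g$ and $z=q^{-1}-q$, which is what pins down the parameters.
\item The curl relation is obtained by expanding the crossing and then applying \eqref{eq:bigon2} to the resulting $2$-labelled bigon. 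This gives $-q\qBinomial{N}{1}+\qBinomial{N-1}{1}$, which should simplify to $\pm g^{\pm1}$ and fix the sign conventions.
\item The Reidemeister II and III relations and invertibility of the sideways crossings follow because \eqref{eq:braid} defines a braiding in $\Spid{N}$, as stated in the definition.
\end{itemize}
Faithfulness is not needed for the equivalence, and ``inclusion'' will follow from the Karoubi statement together with the fact that the two Hom spaces have matching dimensions in the semisimple generic setting. Alternatively, faithfulness can be cited from Brundan's basis theorem, comparing via the specializations $g=q^N$.

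\textbf{Step 3: Karoubi equivalence.} It suffices to show that every generating object $n^\pm$ of $\Spid{N}$ is a direct summand of the image of $\uparrow^{\otimes n}$ (resp.\ $\downarrow^{\otimes n}$), and that the relevant maps lie in the image of the functor. I would use the merge and split webs $1^{\otimes n}\to n\to 1^{\otimes n}$. By iterating \eqref{eq:bigon2}, split composed with merge equals $[n]_q!$ times the identity of $n$. Hence $e_n=\frac{1}{[n]_q!}\,\mathrm{split}\circ\mathrm{merge}$ is an idempotent on $1^{\otimes n}$ whose image is $n^+$.

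The main obstacle is showing that $e_n$ lies in the image of the functor. I would identify $e_n$ with the $q$-antisymmetrizer in the Hecke algebra generated by the $1$-$1$ crossings. For this I would use the quadratic relation from Step 2 and check inductively, via \eqref{eq:IH} and \eqref{eq:id1b}, that the split--merge composite is the Hecke quasi-idempotent $\sum_w(-q)^{-\ell(w)}T_w$, which needs $[n]_q!$ to be invertible over $\C(q)$. Once $e_n$ is in the image, full faithfulness on Karoubi envelopes follows: every ladder factors through merges and splits of $1$-strands, by exploding each edge labelled $k$ via \eqref{eq:bigon2}. Thus every morphism between the images of idempotents is a composite of crossings, cups, caps and the $e_n$, all of which lie in the image. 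Combined with faithfulness, this gives the equivalence.
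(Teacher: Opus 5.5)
The paper gives no proof of its own here; it cites \cite[Prop.~6.7]{queffelecMixedQuantum2019}. Your route is a viable self-contained reconstruction of what that citation supplies: define the functor on generators, check the relations, explode edges so that fullness and essential surjectivity reduce to the split--merge quasi-idempotents $x_k=\mathrm{split}_k\circ\mathrm{merge}_k$, and get faithfulness by specializing $g$. Your relation checks are correct. In particular the curl is $-q\qBinomial{N}{1}+\qBinomial{N-1}{1}=-g=t$, so no further renormalization is needed.

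Your claim that faithfulness ``is not needed for the equivalence'' is false. Both categories embed fully faithfully in their Karoubi envelopes, and the induced functor restricts to the original one. So the equivalence forces $\mathcal{OS}\to\Spid{N}$ to be faithful, and this is the only deep input. Your ``matching dimensions'' justification is circular, since fullness bounds the ranks of Hom spaces in $\Spid{N}$ only from above. You must use your second route:
\begin{itemize}
\item the basis theorem for $\mathcal{OS}$, which survives base change $g\mapsto q^e$;
\item faithfulness of quantum mixed Schur--Weyl duality for $U_q(\mathfrak{gl}_e)$, with $e$ large relative to the number of strands;
\item a choice of $e$ such that the Laurent-polynomial coefficients of a given nonzero element do not all vanish at $g=q^e$;
\item the fact that this realization factors through $\Spid{N}\to\Spid{e}_{\leq e}$.
\end{itemize}

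For $e_n$, the relations \eqref{eq:IH} and \eqref{eq:id1b} never produce crossings, so they cannot place $x_k$ in the image. Absorption ($c_ix_k=q^{-1}x_k$) does not help either without already knowing $\operatorname{End}(1^{\otimes k})$. Instead, apply \eqref{eq:braid} with labels $(k-1,1)$. This gives $\mathrm{split}_{1,k-1}\circ\mathrm{merge}_{k-1,1}=c_{k-1,1}+qR$, where $R$ is the ladder with a single rung labelled $k-2$. Sandwich this between $\mathrm{merge}_{k-1}\otimes 1$ and $1\otimes\mathrm{split}_{k-1}$, use naturality of the braiding, and insert $\mathrm{id}_{k-2}=\frac{1}{[k-2]_q!}\mathrm{merge}_{k-2}\circ\mathrm{split}_{k-2}$. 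This yields
\[
x_k=(1\otimes x_{k-1})\circ c_{1^{\otimes(k-1)},1}+\frac{q}{[k-2]_q!}\,(1\otimes x_{k-1})\circ(x_{k-1}\otimes 1),
\]
where $c_{1^{\otimes(k-1)},1}$ is a product of $k-1$ elementary crossings. Induction starting from $x_1=\mathrm{id}$ (which gives $x_2=c+q$) then puts every $x_k$ in the algebra generated by crossings. With these two repairs your argument proves the lemma.
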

Here $\mathcal{OS}(z,t)$ denotes the \textbf{oriented skein category}, whose morphisms are tangles in a thickened strip modulo the oriented skein relation; see \cite[\S 3]{brundanRepresentationsOriented2017} for details. This lemma, noted in \cite{brundanRepresentationsOriented2017} and proved in \cite[Prop. 6.7]{queffelecMixedQuantum2019}, shows that our spider category is equivalent (after idempotent completion) to the category underlying the HOMFLYPT skein theory.

 In particular, we have a skein relation generalizing \cite[(S)]{brundanRepresentationsOriented2017}:
\begin{equation}\label{eq:skein}
     \begin{tikzpicture}[baseline=20]
    \node (C) at (0,1.5) {$\ell$};
    \node (D) at (1,0) {$\ell$};
    \node (A) at (1,1.5)  {$1$};
    \node (B) at (0,0){$1$};
    \node (i) at (intersection of A--B and D--C) {};
    \draw[mid>] (B) -- (A);
    \draw[mid>] (D) -- (i);
    \draw[mid>] (i) -- (C);
  \end{tikzpicture} -      \begin{tikzpicture}[baseline=20]
    \node (C) at (0,1.5) {$\ell$};
    \node (D) at (1,0) {$\ell$};
    \node (A) at (1,1.5)  {$1$};
    \node (B) at (0,0){$1$};
    \node (i) at (intersection of A--B and D--C) {};
    \draw[mid>] (D) -- (C);
    \draw[mid>] (B) -- (i);
    \draw[mid>] (i) -- (A);
  \end{tikzpicture} = (q^{-1}-q)\,\begin{tikzpicture}[baseline=20]
       \laddercoordinates{1}{1}
    \ladderFn{0}{0}{$\ell$}{$1$}{$\ell-1$}
    \node[left]  at (l00) {$1$};
    \node[right] at (l10) {$\ell$};
  \end{tikzpicture} \;.
\end{equation}
As mentioned before, these relations give an isomorphism between $0^{\pm}$ and the empty list via the lollipop diagram (a single trivalent vertex, with two of the adjacent edges joined).   Thus, our category is effectively unchanged if we just delete all edges with label $0$.

It follows by \cite[Cor. 6.17]{queffelecMixedQuantum2019} and \cite[Th. 1.3]{brundanRepresentationsOriented2017} that we can recover the usual finite $\mathfrak{gl}_e$ spider category $\Spid{e}_{\leq e}$ for $e\in \Z_{\geq 0}$ by specializing $g=q^e$ and killing the objects $n^\pm$ for all $n>e$.  Note that this specialization already sets the dimension of the objects we have killed to 0.  

Experts will note that we have used a renormalized braiding rather than the usual choice for $\mathfrak{sl}_N$, and thus \cref{eq:braid} does not match \cite[(E.6)]{gaiottoDbranesPlanar2026}; it does match the renormalized crossing introduced in \cite[(C.8)]{gaiottoDbranesPlanar2026}. This yields the same link invariants for links in the zero-framing as the usual choice, but at the price that we can only choose a realization functor to the braided category of $U_q(\mathfrak{gl}_N)$ modules; as noted by Queffelec \cite{queffelecSkeinModules2015}, we need the strand with label $N$ to correspond to a non-trivial object.  This difference is not important for our purposes, and avoiding these powers will simplify the exposition.

An important observation that helps structure this category is \cite[Prop. 5.2.1]{cautisWebsQuantum2014}: there is a functor from the idempotented form $\dot{U}$ of $U_q(\mathfrak{gl}_k)$, considered as a category with each weight as an object. This functor sends the weight $(p_1,\dots, p_k)$ to the tensor product $p_1^+\otimes \cdots \otimes p_k^+$ if $p_i\geq 0$ for all $i$, and to 0 otherwise.  Composing with the realization functor, we recover skew Howe duality (\cite[Th. 4.2.1]{cautisWebsQuantum2014}).  We can generalize this to include the objects $n^{-}$ as in \cite{queffelecMixedQuantum2019} (where they denote $n^+$ by $n$ and $n^-$ by $n^*$); in this case, we still have a functor from a version of the idempotented form of $U_q(\mathfrak{gl}_k)$, but the relations \cref{eq:bigon2,eq:commutation-2} show that we must send $n^-$ to $N-n$ in the weight of the quantum group as in \cite[(7.4)]{queffelecMixedQuantum2019}.

\subsection{Heavy objects}
This subsection introduces heavy uprights and explains their role in the planar limit, giving a brief intuition for readers less familiar with Deligne-style limits.

We wish to expand this category by adding new objects that are ``heavy,'' in contrast with the existing ``light'' objects. Rather than an integer label, a heavy object carries a formal label $m$ in an arbitrary index set $M$\notation{$M$}{The index set that we will use for labels on heavy uprights.}, or more generally, that formal label plus an integer (of either sign).  As befits their name, these objects are represented by thicker lines than light objects.

Before discussing formalities, let us indicate the intuition behind this construction.  Our work is based on \cite[\S 2.2]{gaiottoDbranesPlanar2026}, where these heavy objects are considered in the context of the $N \to  \infty$ limit of Chern--Simons theory.  The key distinction is:
\begin{itemize}
\item \textbf{Light objects} correspond to representations $\bigwedge{}^{\!k}\C^N$ where $k$ is a fixed integer independent of $N$. The quantum dimension of such an object is a polynomial in $g=q^N$.
\item \textbf{Heavy objects} correspond to representations $\bigwedge{}^{\!\alpha N}\C^N$ where $\alpha\in (0,1)$ is fixed, so the label scales with $N$. In the planar limit, these become the ``D-branes'' of the dual string theory.
\end{itemize}
For our purposes, it is better to think of $\alpha N$ as an independent formal variable $m$, but in future work, we hope to give a more detailed analysis from the viewpoint of this scaling interpretation.  

We work over the base ring $\Bbbk=\C(q)[g^{\pm 1},q^{\pm m}]_{m\in M}$\notation{$\Bbbk$}{The base ring $\C(q)[g^{\pm 1},q^{\pm m}]$.}.  When we have a list of elements $m_1,\dots, m_k\in M$, we write $\ourmu_i=q^{m_i}$\notation{$\ourmu,\ourmu_i$}{The formal exponential $q^m,q^{m_i}$.} for simplicity of notation.   In \cite{gaiottoDbranesPlanar2026}, the symbol $\mu_i$ is used for $\ourmu_i$, but we wish to avoid confusion with the notation for knot contact homology, since a change of variables is needed to relate to the usual notation, for example in \cite{ngTopologicalIntroduction2014}.
As before, we use $N$ as just a formal symbol that reminds us to add factors of $g=q^N$.

We consider the ladders as in \cref{def:ladder1}, but now using labels from the set $M$:
\begin{definition}
	A {\bf heavy ladder} with $k$ uprights is a ladder diagram where each rung is labeled by a non-negative integer and each section of an upright is labeled by an element of $\Z_{\geq 0}$, or an element of $M+\Z$.  The labeling must satisfy the condition that the signed sum of the segments at each vertex is zero.
	Note that this means that all the labels on a single upright lie in $m+\Z$ for a fixed $m\in M$ or in $\Z_{\geq 0}$.  
\end{definition}
Let $\Pi=\Z_{\geq 0}\cup (M+\Z)$\notation{$\Pi$}{The combined index sets for light and heavy uprights $\Pi=\Z_{\geq 0}\cup (M+\Z)$.}.  
We call the uprights with labels in $M+\Z$ {\bf heavy} and those with labels in $\Z_{\geq 0}$ {\bf light}.

As we did with the integers, we consider positive and negative copies of $\Pi$ as labels on the  bottom and top of each upright of a ladder: if $r$ is the label on the adjacent portion of the upright, then we label the bottom with $r^+$ if it is oriented up and $r^-$ if it is oriented down.  For convenience, given $r\in \Pi, a\in \Z$, we let $r^{\pm}+a=(r\pm a)^{\pm}$.

\begin{definition}\label{def:SpidM}
As in the definition of $\Spid{N}$, we consider the monoidal category $\SpidM{N}$\notation{$\SpidM{N}$}{The spider category incorporating heavy uprights labeled with elements of $\Pi$.} generated by objects $\Pi^+$ and $\Pi^-$, so that a general object is a list of these generators, with tensor product given by concatenation.

	The morphisms from $(a_1^{\epsilon_1},\dots, a_k^{\epsilon_k})$ to $(b_1^{\tau_1},\dots, b_{\ell}^{\tau_{\ell}})$  for $a_i,b_i\in\Pi$ and $\epsilon_i,\tau_i\in \{+,-\}$ are generated by a fixed isomorphism $0^+\cong \emptyset$ and by heavy ladders in $\R\times [0,1]$ that satisfy:
	\begin{enumerate}
		\item The points where the ladder meets $\R\times \{0\}$ are labeled with $a_i$ read from left to right, with the edge oriented upwards if $\epsilon_i= +$ and downwards if $\epsilon_i = -$.  
		\item The points where the ladder meets $\R\times \{1\}$ are labeled with $b_i$ read from left to right, with the edge oriented upward if $\tau_i= +$ and downward if $\tau_i = -$.  
	\end{enumerate}
We can convert these ladders into graphs as in \cref{def:spider}.  The morphism space from the object $(a_1^{\epsilon_1},\dots, a_k^{\epsilon_k})$ to $(b_1^{\tau_1},\dots, b_{\ell}^{\tau_{\ell}})$ is thus spanned by the graphs arising from heavy ladders as above, modulo the local relations \crefrange{eq:loop}{eq:serre2} with any mix of heavy and light uprights, where we interpret the $q$-binomial coefficient as a formal rational function in terms of $q,g=q^N,\ourmu=q^m$ and $\ourmu'=q^{m'}$ as in \cref{eq:formal-q-binom}:
  \begin{align}
      \qBinomial{m-m'+r-s}{t}&=\frac{\prod_{i=0}^{t-1} (q^{r-s-i}\ourmu(\ourmu')^{-1}-q^{s-r+i}\ourmu^{-1}\ourmu')}{\prod_{i=1}^{t}(q^i-q^{-i})},\\    \qBinomial{N-m-m'+r-s}{t}&=\frac{\prod_{i=0}^{t-1} (gq^{r-s-i}(\ourmu\ourmu')^{-1}-g^{-1}q^{s-r+i}\ourmu\ourmu')}{\prod_{i=1}^{t}(q^i-q^{-i})}.
  \end{align}
\end{definition}
We let $\mathbf{m},\boldsymbol{\epsilon}$ be the $k$-tuples in $M$ (ignoring addition by integers) and $\{\pm\}$ such that $m_i^{\epsilon_i}$ is the label on the $i$th heavy upright.  Since we do not allow heavy labels on rungs, if these sequences are different, then the morphism space between the corresponding objects is trivial.

\begin{definition}
    Let $\Spidm{\mathbf{m},\boldsymbol{\epsilon}}{N}$\notation{$\Spidm{\mathbf{m},\boldsymbol{\epsilon}}{N}$}{The full subcategory of $\SpidM{N}$ consisting of the objects $((m_1+p_1)^{\epsilon_1},\dots(m_n+p_n)^{\epsilon_n})$.} be the full subcategory of $\SpidM{N}$ consisting of the objects $((m_1+p_1)^{\epsilon_1},\dots,(m_n+p_n)^{\epsilon_n})$ with $p_i \in \mathbb{Z}$.  The morphisms in this category can be written as ladders where all uprights are heavy.  

    Let $\Spl{\mathbf{m},\boldsymbol{\epsilon}}{N}$\notation{$\Spl{\mathbf{m},\boldsymbol{\epsilon}}{N}$}{The full subcategory of $\SpidM{N}$ consisting of the objects where the labels on heavy uprights are $((m_1+p_1)^{\epsilon_1},\dots(m_n+p_n)^{\epsilon_n})$, but additional light uprights are allowed.} be the larger category where we allow additional light uprights at the top and bottom of the diagram while leaving the same sequence of heavy uprights.  In particular, in a diagram giving a morphism in $\Spidm{\mathbf{m},\boldsymbol{\epsilon}}{N}$, if $y_0<y_1$ are generic heights (that is, neither height passes through a crossing, minimum or maximum), then the portion of a diagram between the lines $y=y_0$ and $y=y_1$ is a morphism in $\Spl{\mathbf{m},\boldsymbol{\epsilon}}{N}$.
\end{definition}

Another consequence is that we cannot allow heavy objects to braid with each other or to form cups and caps, so we no longer have a braided or rigid category once we add them.  It is an intriguing question in what context we can define such a braiding, which would seem to require an infinite sum.  
\begin{remark}\label{rem:difficulties}
To see some of the difficulties of defining a value for heavy objects, consider the case of the unknot.  Note that the argument in the introduction where we found a recursion for the quantum invariants of the unknot did not require us to assign an actual value to the unknot in the limit $N\to \infty$ and $q\to 1$; obviously, this is not possible as a single complex number, but there is an interesting way to think about it as a divergent series, discussed in \cite[\S 2.2]{gaiottoDbranesPlanar2026}.  For finite values of $N,m$, if we fix $\hbar= \frac{\log g}{N}$ and so $q=e^{\hbar}$, one obtains 
\begin{align}
	\log  \tikz[baseline]{\draw[->,very thick] (0,0.5) node[above] {$m$} arc (90:-315:0.5cm);} &=\sum_{i=0}^{m-1}\log(gq^{-i}-g^{-1}q^{i})-\log(q^{i+1}-q^{-i-1})\\
	&=\sum_{i=0}^{m-1}\log(g^{1-i/N}-g^{-1+i/N})-\log(g^{(i+1)/N}-g^{(-i-1)/N}).
\end{align}
For large $m$ and $N$, this becomes a Riemann sum approximating the integral
\[N\int_0^{m/N} \log(g^{1-x}-g^{-1+x})-\log(g^{x}-g^{-x})dx.\]
We can now treat $m$ and $N$ as continuous quantities, fix the ratio $\alpha=m/N$, and write this as
\[\log  \tikz[baseline]{\draw[->,very thick] (0,0.5) node[above] {$m$} arc (90:-315:0.5cm);}=\frac{\log g}{\hbar}\int_0^{\alpha} \log(g^{1-x}-g^{-1+x})-\log(g^{x}-g^{-x})dx.\]
Thus, considered as a holomorphic function of $\hbar$, the value of the unknot has an essential singularity at $\hbar=0$ proportional to the function $e^{1/\hbar}$, but away from this point, it is holomorphic. 

The study of how to make sense of this sort of function is a central part of resurgent analysis, so this argument suggests a connection with the study of resurgence in the context of link invariants \cite{garoufalidisResurgentStructure2021}.
\end{remark}

\begin{remark}
    One potential approach to making sense of these heavy objects is to consider an ultraproduct category.  That is, for every $\mathbf{k}\in \mathbb{Z}^M$, we consider a copy of the category $\Spid{N}$, and consider the ultraproduct of these categories with respect to a Fubini non-principal ultrafilter on $\mathbb{Z}^M$.  
    We can define light objects in this ultraproduct category by taking the same object $(a)$ in each factor of the product for all $\mathbf{k}$, and we define a heavy object with label $m$ by taking the usual object $(k_m)$ in each factor.  
    The resulting category is linear over the ultraproduct of copies of $\C[q,q^{-1}]$, and still braided and ribbon.   
    The price one pays for this is that the field of scalars is enormous---a closed heavy link has a well-defined value, but it is just the ultraproduct of the values of the HOMFLYPT polynomial of the link with different colors.

    It is an interesting question to ask whether this ultraproduct category has a natural MOY type model; the relations in $\Spidm{\mathbf{m},\boldsymbol{\epsilon}}{N}$ hold of course, but this does not account for skein type relations involving heavy tangles.  In particular, since the tensor product $\bigwedge{}^{\!a}\C^N\otimes \bigwedge{}^{\!b}\C^N$ is multiplicity-free for all $a,b,N$, we have relations that arise from mutation of links.  This shows that two tangles that differ by a mutation give the same morphism in the ultraproduct category by \cite[Th. 5]{mortonDistinguishingMutants1996}.
\end{remark}

On the other hand, we can make sense of the braiding of heavy and light objects by extending the equations \crefrange{eq:braid}{eq:braid-op} when one of the strands in the formula is light.  Let us record the result in the case where the light strand has label $1$:
\begin{align}
  \begin{tikzpicture}[baseline=23,scale=.6]
    \node (C) at (-1,3) {$m$};
    \node (D) at (1,0) {$m$};
    \node (A) at (1,3)  {$1$};
    \node (B) at (-1,0){$1$};
    \node (i) at (intersection of A--B and D--C) {};
    \draw[mid>,very thick] (D) -- (C);
    \draw[mid>] (B) -- (i);
    \draw[mid>] (i) -- (A);
  \end{tikzpicture}
  &= 
  \begin{tikzpicture}[baseline=23,scale=.6]
    \laddercoordinates{1}{2}
    \node[left]  at (l00) {$1$};
    \node[right] at (l10) {$m$};
    \node[left]  at (l02) {$m$};
    \node[right] at (l12) {$1$};
     \node[inner sep=0pt] (i) at (intersection of l00--l11 and l10--l02) {};
          \node[inner sep=0pt] (j) at (intersection of l01--l12 and l10--l02) {};
    \draw[mid>] (l00) -- (i);
    \draw[mid>] (j) -- (l12);
    \draw[mid>, very thick] (l10)-- (l02);
  \end{tikzpicture}
   -q^{-1}
  \begin{tikzpicture}[baseline=23,scale=.6]
    \laddercoordinates{1}{2}
    \node[left]  at (l00) {$1$};
    \node[right] at (l10) {$m$};
    \node[left]  at (l02) {$m$};
    \node[right] at (l12) {$1$};
     \node[inner sep=0pt] (i) at (intersection of l00--l11 and l10--l02) {};
          \node[inner sep=0pt] (j) at (intersection of l01--l12 and l10--l02) {};
              \draw[mid>] (l00) -- (j);
    \draw[mid>] (i) -- (l12);
    \draw[mid>, very thick] (l10)-- (l02);
  \end{tikzpicture}\;,   \label{eq:thick-braid1}\\   \begin{tikzpicture}[baseline=23,scale=.6]
    \node (C) at (1,3) {$m$};
    \node (D) at (-1,0) {$m$};
    \node (A) at (-1,3)  {$1$};
    \node (B) at (1,0){$1$};
    \node (i) at (intersection of A--B and D--C) {};
    \draw[mid>,very thick] (D) -- (C);
    \draw[mid>] (B) -- (i);
    \draw[mid>] (i) -- (A);
  \end{tikzpicture}
  &= 
  \begin{tikzpicture}[baseline=23,yscale=.6,xscale=-.6]
    \laddercoordinates{1}{2}
    \node[right]  at (l00) {$1$};
    \node[left] at (l10) {$m$};
    \node[right]  at (l02) {$m$};
    \node[left] at (l12) {$1$};
     \node[inner sep=0pt] (i) at (intersection of l00--l11 and l10--l02) {};
          \node[inner sep=0pt] (j) at (intersection of l01--l12 and l10--l02) {};
    \draw[mid>] (l00) -- (i);
    \draw[mid>] (j) -- (l12);
    \draw[mid>, very thick] (l10)-- (l02);
  \end{tikzpicture}
   -q
     \begin{tikzpicture}[baseline=23,yscale=.6,xscale=-.6]
    \laddercoordinates{1}{2}
    \node[right]  at (l00) {$1$};
    \node[left] at (l10) {$m$};
    \node[right]  at (l02) {$m$};
    \node[left] at (l12) {$1$};
     \node[inner sep=0pt] (i) at (intersection of l00--l11 and l10--l02) {};
          \node[inner sep=0pt] (j) at (intersection of l01--l12 and l10--l02) {};
              \draw[mid>] (l00) -- (j);
    \draw[mid>] (i) -- (l12);
    \draw[mid>, very thick] (l10)-- (l02);
  \end{tikzpicture} \;,
  \label{eq:thick-braid2}
    \\
  \begin{tikzpicture}[baseline=23,scale=.6]
    \node (C) at (-1,3) {$1$};
    \node (D) at (1,0) {$1$};
    \node (A) at (1,3)  {$m$};
    \node (B) at (-1,0){$m$};
    \node (i) at (intersection of A--B and D--C) {};
    \draw[mid>] (D) -- (C);
    \draw[mid>, very thick] (B) -- (i);
    \draw[mid>, very thick] (i) -- (A);
  \end{tikzpicture}
  &=   \begin{tikzpicture}[baseline=23,yscale=.6,xscale=-.6]
    \laddercoordinates{1}{2}
    \node[right]  at (l00) {$1$};
    \node[left] at (l10) {$m$};
    \node[right]  at (l02) {$m$};
    \node[left] at (l12) {$1$};
     \node[inner sep=0pt] (i) at (intersection of l00--l11 and l10--l02) {};
          \node[inner sep=0pt] (j) at (intersection of l01--l12 and l10--l02) {};
    \draw[mid>] (l00) -- (i);
    \draw[mid>] (j) -- (l12);
    \draw[mid>, very thick] (l10)-- (l02);
  \end{tikzpicture}
   -q^{-1}
  \begin{tikzpicture}[baseline=23,yscale=.6,xscale=-.6]
    \laddercoordinates{1}{2}
    \node[right]  at (l00) {$1$};
    \node[left] at (l10) {$m$};
    \node[right]  at (l02) {$m$};
    \node[left] at (l12) {$1$};
     \node[inner sep=0pt] (i) at (intersection of l00--l11 and l10--l02) {};
          \node[inner sep=0pt] (j) at (intersection of l01--l12 and l10--l02) {};
              \draw[mid>] (l00) -- (j);
    \draw[mid>] (i) -- (l12);
    \draw[mid>, very thick] (l10)-- (l02);
  \end{tikzpicture} \;,
 \label{eq:thick-braid3} \\   \begin{tikzpicture}[baseline=23,scale=.6]
    \node (C) at (1,3) {$1$};
    \node (D) at (-1,0) {$1$};
    \node (A) at (-1,3)  {$m$};
    \node (B) at (1,0){$m$};
    \node (i) at (intersection of A--B and D--C) {};
    \draw[mid>] (D) -- (C);
    \draw[mid>,very thick] (B) -- (i);
    \draw[mid>,very thick] (i) -- (A);
  \end{tikzpicture}
  &= 
  \begin{tikzpicture}[baseline=23,scale=.6]
    \laddercoordinates{1}{2}
    \node[left]  at (l00) {$1$};
    \node[right] at (l10) {$m$};
    \node[left]  at (l02) {$m$};
    \node[right] at (l12) {$1$};
     \node[inner sep=0pt] (i) at (intersection of l00--l11 and l10--l02) {};
          \node[inner sep=0pt] (j) at (intersection of l01--l12 and l10--l02) {};
    \draw[mid>] (l00) -- (i);
    \draw[mid>] (j) -- (l12);
    \draw[mid>, very thick] (l10)-- (l02);
  \end{tikzpicture}
   -q
  \begin{tikzpicture}[baseline=23,scale=.6]
    \laddercoordinates{1}{2}
    \node[left]  at (l00) {$1$};
    \node[right] at (l10) {$m$};
    \node[left]  at (l02) {$m$};
    \node[right] at (l12) {$1$};
     \node[inner sep=0pt] (i) at (intersection of l00--l11 and l10--l02) {};
          \node[inner sep=0pt] (j) at (intersection of l01--l12 and l10--l02) {};
              \draw[mid>] (l00) -- (j);
    \draw[mid>] (i) -- (l12);
    \draw[mid>, very thick] (l10)-- (l02);
  \end{tikzpicture} \;. 
  \label{eq:thick-braid4}
\end{align}
In particular, we have a thick version of the skein relation \eqref{eq:skein}:
\begin{align}\label{eq:thick-skein1}
	\begin{tikzpicture}[baseline=23,scale=.6]
    \node (C) at (1,3) {$m$};
    \node (D) at (-1,0) {$m$};
    \node (A) at (-1,3)  {$1$};
    \node (B) at (1,0){$1$};
    \node (i) at (intersection of A--B and D--C) {};
    \draw[mid>,very thick] (D) -- (C);
    \draw[mid>] (B) -- (i);
    \draw[mid>] (i) -- (A);
  \end{tikzpicture} -   \begin{tikzpicture}[baseline=23,scale=.6]
    \node (C) at (-1,3) {$1$};
    \node (D) at (1,0) {$1$};
    \node (A) at (1,3)  {$m$};
    \node (B) at (-1,0){$m$};
    \node (i) at (intersection of A--B and D--C) {};
    \draw[mid>] (D) -- (C);
    \draw[mid>, very thick] (B) -- (i);
    \draw[mid>, very thick] (i) -- (A);
  \end{tikzpicture}&=(q^{-1}-q)    \begin{tikzpicture}[baseline=23,yscale=.6,xscale=-.6]
    \laddercoordinates{1}{2}
    \node[right]  at (l00) {$1$};
    \node[left] at (l10) {$m$};
    \node[right]  at (l02) {$m$};
    \node[left] at (l12) {$1$};
     \node[inner sep=0pt] (i) at (intersection of l00--l11 and l10--l02) {};
          \node[inner sep=0pt] (j) at (intersection of l01--l12 and l10--l02) {};
              \draw[mid>] (l00) -- (j);
    \draw[mid>] (i) -- (l12);
    \draw[mid>, very thick] (l10)-- (l02);
  \end{tikzpicture} \;,\\\label{eq:thick-skein2}
  \begin{tikzpicture}[baseline=23,scale=.6]
    \node (C) at (1,3) {$1$};
    \node (D) at (-1,0) {$1$};
    \node (A) at (-1,3)  {$m$};
    \node (B) at (1,0){$m$};
    \node (i) at (intersection of A--B and D--C) {};
    \draw[mid>] (D) -- (C);
    \draw[mid>,very thick] (B) -- (i);
    \draw[mid>,very thick] (i) -- (A);
  \end{tikzpicture} -   \begin{tikzpicture}[baseline=23,scale=.6]
    \node (C) at (-1,3) {$m$};
    \node (D) at (1,0) {$m$};
    \node (A) at (1,3)  {$1$};
    \node (B) at (-1,0){$1$};
    \node (i) at (intersection of A--B and D--C) {};
    \draw[mid>,very thick] (D) -- (C);
    \draw[mid>] (B) -- (i);
    \draw[mid>] (i) -- (A);
  \end{tikzpicture}&=  (q^{-1}-q) \begin{tikzpicture}[baseline=23,scale=.6]
    \laddercoordinates{1}{2}
    \node[left]  at (l00) {$1$};
    \node[right] at (l10) {$m$};
    \node[left]  at (l02) {$m$};
    \node[right] at (l12) {$1$};
     \node[inner sep=0pt] (i) at (intersection of l00--l11 and l10--l02) {};
          \node[inner sep=0pt] (j) at (intersection of l01--l12 and l10--l02) {};
              \draw[mid>] (l00) -- (j);
    \draw[mid>] (i) -- (l12);
    \draw[mid>, very thick] (l10)-- (l02);
  \end{tikzpicture} \;. 
\end{align}

This makes $\SpidM{N}$ into a monoidal bimodule category over $\Spid{N}$ with a braiding relating the left and right actions.  Since the functors of horizontal composition on the left or right by a given object are isomorphic, we can package this information into just the left action of $\Spid{N}$, which is a braided monoidal action in the sense of \cite[Def. 3.4]{ben-zviQuantumCharacter2018}, with the automorphism $E$ given by winding the object of $\Spid{N}$ around the thick strand in the positive direction.
As discussed in \cite[Rmk. 3.6]{ben-zviQuantumCharacter2018}, this notion depends on a framing of the annulus, which gives us the framing of the thin strands as they wrap around the thick one.   However, the ribbon structure of $\Spid{N}$ allows us to relate all these different framings by adding in an appropriate number of full twists to the object in $\Spid{N}$ as it wraps around.

Returning to thinking of left and right actions, the  braiding structure of the action exactly captures the fact that there are two isomorphisms between the left and right actions, corresponding to the two different ways of braiding the heavy and light objects.
	
\begin{remark}\label{rem:skein-algebra}
From a ribbon category like $\Spid{N}$, we can produce an invariant of surfaces called the \emph{skein category}.  This is a natural extension of the skein algebras; it seems to have first appeared in written form in unpublished work of Walker. It was shown by Cooke in \cite{cookeExcisionSkein2023} that we can interpret this category as the factorization homology of the ribbon category on the surface.

We should be able to interpret our ``heavy strands'' as taking the factorization homology with marked points as discussed in  \cite[\S 5.1]{ben-zviQuantumCharacter2018}, considering the case with a single heavy upright as a braided module category over $\Spid{N}$, which we then put on each marked point.  

The only case that will be important for us is when the surface is $\R^2$ or $S^2$.  In the former case, we obtain the subcategory of $\SpidM{N}$ where we fix  $k$ heavy uprights with labels in $m_i^{\epsilon_i}+\Z$ for fixed $m_i\in M, \epsilon_i\in \{+,-\}$ and then add in arbitrary light uprights; in the latter case, we obtain a quotient of this category by specializing the central elements of the universal enveloping algebra to the scalars given by the ``charge at $\infty$'' in \cite[\S 3.1]{gaiottoDbranesPlanar2026}.
\end{remark}

It is straightforward to give a basis of the morphism spaces in $\Spidm{\mathbf{m},\boldsymbol{\epsilon}}{N}$, based on the PBW theorem. First, recall that we can give the quantum group $U_q(\mathfrak{gl}_n;\Bbbk)$ a grading by the group $\Z^n$, where the homogeneous component for $(w_1,\dots, w_n)\in \Z^n$ is the set of elements of this weight under the adjoint action of the Cartan---let \[U_{(w_1,\dots,w_n)}=\{u\in U_q(\mathfrak{gl}_n;\Bbbk) \mid K_iuK_i^{-1}=q^{w_i}u \text{ for }i=1,\dots, n\}.\]  

Let $q^{r^+}=q^{r}$ and $q^{r^-}=gq^{-r}$.  For a fixed $n$-tuple $(r_1,\dots, r_n)$ with $r_i\in \Pi^{\pm}$, consider the map $\Bbbk[\mathbf{K}^{\pm 1}]=\Bbbk[K_1^{\pm 1},\dots, K_n^{\pm 1}]\to \Bbbk$ sending $K_i\mapsto q^{r_i}$; let $\Bbbk_{\Br}$ be $\Bbbk$, thought of as a $\Bbbk[\mathbf{K}^{\pm 1}]$-module.

\begin{definition}
    Let $\dot U_{\mathbf{m},\boldsymbol{\epsilon}}:=\dot U_{\mathbf{m},\boldsymbol{\epsilon}}(\mathfrak{gl}_n;\Bbbk)$\notation{$\dot U_{\mathbf{m},\boldsymbol{\epsilon}}$}{The extension of the idempotented quantum group to include the objects $((m_1+p_1)^{\epsilon_1},\dots,(m_n+p_n)^{\epsilon_n})$.} be the category whose objects are $n$-tuples $\Br=(r_1,\dots, r_n)$ with $r_i\in m_i^{\epsilon_i}+\Z$, and where  
    \[ \Hom(\Br,\Br')=\Bbbk_{\Br'}\otimes_{\Bbbk[\mathbf{K}^{\pm 1}]}U_q(\mathfrak{gl}_n;\Bbbk)\otimes_{\Bbbk[\mathbf{K}^{\pm 1}]} \Bbbk_{\Br}\cong 
        U_{\Br'-\Br}\otimes_{\Bbbk[\mathbf{K}^{\pm 1}]} \Bbbk_{\Br}. \]
  Composition is induced by multiplication in $U_q(\mathfrak{gl}_n;\Bbbk)$.
\end{definition}

  By a natural extension of \cite[Prop. 7.2 \& 7.5]{queffelecMixedQuantum2019}, we see that we have:
\begin{lemma}\label{Uq-injective}
	There is an equivalence of categories  \[\dot U_{\mathbf{m},\boldsymbol{\epsilon}}\to \Spidm{\mathbf{m},\boldsymbol{\epsilon}}{N}\] 
    sending $\Br$ to the same $n$-tuple, thought of as an object in $\SpidM{N}$, and sending $E_i$ and $F_i$ to the usual diagrams 
	\[
	  \label{eq:E-F}
    E_i\mapsto \begin{tikzpicture}[baseline=20]
    \laddercoordinates{1}{1}
    \node[left]  at (l00) {};
    \node[right] at (l10) {};
    \TladderFd{0}{0}{}{}{$1$}{}{}
  \end{tikzpicture} \;,
  \qquad  F_i\mapsto 
  \begin{tikzpicture}[baseline=20]
    \laddercoordinates{1}{2}
    \node[left]  at (l00) {};
    \node[right] at (l10) {};
    \TladderEd{0}{0}{}{}{$1$}{}{}
  \end{tikzpicture} \;.
	\]
\end{lemma}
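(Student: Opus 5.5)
Since Lemma~\ref{Uq-injective} is asserted to follow from a ``natural extension'' of \cite[Prop. 7.2 \& 7.5]{queffelecMixedQuantum2019}, I will follow their argument and track only what changes when the labels become formal. The argument has three steps: define the functor, show it is full, and show it is faithful.

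\textbf{Well-definedness.} On objects the functor sends $\Br\mapsto\Br$. It sends $K_i$ to the scalar $q^{r_i}$ on the object $\Br$, and $E_i,F_i$ to the rung diagrams displayed in the lemma. We must check that the defining relations of $U_q(\mathfrak{gl}_n;\Bbbk)$ hold among these ladders.
\begin{itemize}
\item The commutator relation $[E_i,F_i]=\frac{K_iK_{i+1}^{-1}-K_i^{-1}K_{i+1}}{q-q^{-1}}$ is the $r=s=1$ case of \cref{eq:commutation}, or of \cref{eq:commutation-2} when the orientations are mixed. There the relevant binomial is $\qBinomial{k-l}{1}$ or $\qBinomial{N-k-l}{1}$, and evaluating it with $q^{k}=\ourmu_i q^{p}$ gives exactly $q^{r_i}$, using the convention $q^{r^-}=gq^{-r}$.
\item The commutation of $E_i$ and $F_j$ for $i\neq j$ comes from \cref{eq:IHlad,eq:IHlad2}.
\item The Serre relations are \cref{eq:serre1,eq:serre2}.
\item The far-commutativity relations hold by isotopy.
\end{itemize}
All of these relations are imposed in $\SpidM{N}$ for arbitrary mixes of heavy and light uprights, with the binomials read as the formal rational functions of \cref{def:SpidM}. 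So the functor is well defined. This step is essentially bookkeeping; the only subtlety is checking that the $\pm$ conventions for $q^{r^\pm}$ match the substitution $k\mapsto N-k$ for downward uprights.

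\textbf{Fullness.} A morphism in $\Spidm{\mathbf{m},\boldsymbol{\epsilon}}{N}$ is a linear combination of heavy ladders with all uprights heavy and integer-labeled rungs. Using \cref{eq:id1b} run backwards, with the $q$-binomial invertible over $\C(q)$, a rung labeled $s$ is $\frac{1}{[s]!}$ times a stack of $s$ rungs labeled $1$. It is therefore the image of a divided power $E_i^{(s)}$ or $F_i^{(s)}$. Hence every ladder is the image of a product of divided powers applied to $1_\Br$.

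\textbf{Faithfulness.} This is the main obstacle. In \cite{queffelecMixedQuantum2019} faithfulness is obtained by mapping to actual representations and invoking skew Howe duality, which needs integer labels. My plan is a specialization argument. Use the PBW basis of $U_{\Br'-\Br}\otimes\Bbbk_{\Br}$: monomials $F^{\mathbf a}E^{\mathbf b}$ in root vectors, which is free over $\Bbbk$. Given a relation $\sum c_j F^{\mathbf a_j}E^{\mathbf b_j}\mapsto 0$ with $c_j\in\Bbbk$, specialize $\ourmu_i\mapsto q^{k_i}$ and $g\mapsto q^{e}$ for integers $k_i$ and $e\gg k_i\gg 0$ (for downward uprights, read $k_i$ as the label $N-k_i$). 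Every relation in $\SpidM{N}$ specializes to a relation in the finite spider $\Spid{e}_{\le e}$. In that finite setting, \cite[Prop. 7.2 \& 7.5]{queffelecMixedQuantum2019}, together with skew Howe duality in \cite[Th. 4.2.1]{cautisWebsQuantum2014}, says the PBW monomials of bounded degree act linearly independently on $\bigwedge^{\bullet}$ once the weights are large. Hence the specialized $c_j$ vanish.

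Because the $c_j$ are Laurent polynomials (after clearing denominators in $\C(q)$) in $g$ and the $\ourmu_i$, and they vanish on a Zariski-dense set of specializations, the $c_j$ are zero. The delicate point is ensuring the finite statement holds uniformly. We need large weights so that the relevant PBW monomials of bounded degree are independent in the finite spider. I would get this from the standard fact that $\dot U(\mathfrak{gl}_n)$ acts faithfully on sufficiently dominant weight spaces of $\bigwedge^{\bullet}(\C^n\otimes\C^e)$.

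Combining the three steps, the functor is bijective on objects, full, and faithful, hence an equivalence.
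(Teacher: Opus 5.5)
\textbf{Where your outline matches the paper.} Your plan follows the paper's proof. The functor comes from checking the quantum group relations against the ladder relations, as in \cite[Prop.~7.2]{queffelecMixedQuantum2019}. Fullness comes from writing label-$s$ rungs as images of divided powers. Faithfulness is reduced to specializing the formal labels to integers and invoking skew Howe duality.

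Your bookkeeping in the faithfulness step is sound. You use the PBW basis of $U_{\Br'-\Br}\otimes\Bbbk_{\Br}$ over $\Bbbk$, specialize $g$ together with the $\ourmu_i$, and conclude that the coefficients vanish by density. This is more explicit than the paper about a $\Bbbk$-coefficient possibly vanishing under a particular specialization. The paper instead keeps $g$ formal, passes through the mixed Schur algebra and its faithful functor to $\Spid{N}$ \cite[Prop.~7.11]{queffelecMixedQuantum2019}, and only sets $g=q^t$ inside the skew Howe module.

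\textbf{The gap.} What you leave unproved is the input that carries all the content of faithfulness. You need that a fixed finite set of PBW monomials $F^{\mathbf a}E^{\mathbf b}1_\lambda$ has linearly independent images on $\bigwedge^{\bullet}(\C^n\otimes\C^e)$ for every $\lambda$ in a suitably deep region and every $e$ large enough. Your density argument needs this on a whole region, not at a single point. The results you cite give the integer-label functor and skew Howe duality, not this uniform statement.

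The ``standard fact'' you propose to deduce it from is false as written, for two reasons.
\begin{itemize}
\item For fixed $e$ the module is finite-dimensional, while $1_{\lambda'}\dot U1_\lambda$ is infinite-dimensional for $n\ge 2$.
\item Even summing over all $e$, an element such as $F_i^{(a)}E_i^{(a)}1_\lambda$, with $a$ larger than the relevant coordinate of $\lambda$, factors through a weight with a negative entry. It therefore acts by zero, although it is a nonzero PBW basis element of $\dot U$.
\end{itemize}
So no fixed weight space carries a faithful action, and dominance is the wrong condition. What you need is that every coordinate of the specialized weight, and $e$ minus every coordinate, is large compared with the degrees of the monomials. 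In the mixed case this is where $K_i\mapsto gq^{-r_i}$ enters.

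\textbf{How to close it.} The uniform statement is true, but it must be proved. One way is the paper's argument.
\begin{itemize}
\item Write the element as $\sum_i x_i'x_i''$ with $x_i'\in U^+$ and $x_i''\in U^-$.
\item Choose a highest-weight irreducible $V_{\lambda'}$ and a lowest-weight irreducible $\Lambda_{\lambda''}$ whose weights are far from the walls. Then the bounded-degree parts of $U^-$ and $U^+$ act injectively on the respective extremal vectors.
\item Deduce that the element acts nontrivially on $V_{\lambda'}\otimes\Lambda_{\lambda''}$. This tensor product occurs in the skew Howe module once $t\ge t'+t''$.
\end{itemize}
Since any deep $\lambda$ splits as such a $\lambda'+\lambda''$, this gives exactly the region you need. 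Beilinson--Lusztig--MacPherson stabilization of $q$-Schur algebras would also supply the statement. Once this lemma is in place, the rest of your specialization and density argument goes through.
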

\begin{proof}
  The verification of the quantum group relations is exactly as in \cite[Prop. 7.2]{queffelecMixedQuantum2019}, so we obtain a functor. This functor is full because any ladder can be written in terms of rungs with label $1$; rungs with other integer labels are images of divided powers $E_i^{(r)},F_i^{(r)}$.  It is also essentially surjective by direct construction.
 Thus, the only nontrivial part of the proof is faithfulness.  

 We can always choose integers $a_m$, and we obtain a functor $\SpidM{N}\to \Spid{N}$ to the spider category which sends $(m+k)^{\pm}\to (a_m+k)^\pm$ if $a_m+k\geq 0$ and otherwise sends this object to $0$.  Of course, this functor acts on diagrams by killing all diagrams containing heavy uprights with a label $m+k$ where $a_m+k<0$, and otherwise turning heavy uprights with the label $m+k$ for $m\in M, k\in \Z$ into light uprights with label $a_m+k$. 

Similarly, we have a functor from $\dot U_{\mathbf{m},\boldsymbol{\epsilon}}$ to the mixed Schur algebra $\dot{U}_q(\mathfrak{gl}_{\boldsymbol{\epsilon}})_g$ as defined in \cite[Prop. 7.1]{queffelecMixedQuantum2019}.  We thus have a commutative diagram:
\[\begin{tikzcd}
	{\dot{U}_{\mathbf{m},\boldsymbol{\epsilon}}} & {\SpidM{N}} \\
	{\dot{U}_q(\mathfrak{gl}_{\boldsymbol{\epsilon}})_g} & {\Spid{N}}
	\arrow[from=1-1, to=1-2]
	\arrow[from=1-1, to=2-1]
	\arrow[from=1-2, to=2-2]
	\arrow[from=2-1, to=2-2]
\end{tikzcd} \;.\]
    
    Consider the functor from top left to bottom right.  No fixed choice of $a_m$ gives a faithful functor (as is clear from the definition), but the family over all choices of $a_m$ is faithful.  By \cite[Prop. 7.11]{queffelecMixedQuantum2019}, the bottom arrow is faithful, so we only need to consider the kernel of the left arrow.  Thus, we need to show that for any non-zero element $x\in U_{\Br'-\Br}\otimes_{\Bbbk[\mathbf{K}^{\pm 1}]} \Bbbk_{\Br}$, there exists a choice of $a_m$ such that the image of $x$ in $\dot{U}_q(\mathfrak{gl}_{\boldsymbol{\epsilon}})_g$ is non-zero.

    By \cite[Prop. 7.7]{queffelecMixedQuantum2019}, for any $t\in\Z_{\geq 0}$, we have an action of $\dot{U}_q(\mathfrak{gl}_{\boldsymbol{\epsilon}})_g$ on a direct sum $T$ of tensor products of wedge powers of $\C(q)^t$ and its dual, depending on the sign sequence $\boldsymbol{\epsilon}$, and commuting with the action of $U_q(\mathfrak{sl}_t)$.  
    This action is often called ``skew Howe duality.'' 
    We can understand its kernel by decomposing $T$ into irreducible representations of $\dot{U}_q(\mathfrak{gl}_n)$.  
    We can thus confirm that an element $x$ is non-zero by showing it acts non-trivially on some irreducible summand of $T$.
   The irreducible representations of $\dot{U}_q(\mathfrak{gl}_n)$ that appear have a reasonable combinatorial description: we obtain exactly the irreps corresponding to highest weights $\lambda$ satisfying $0\leq \lambda_1\leq \cdots \leq \lambda_n\leq t$. The identity  $1_\Br$ acts by projection to a weight space for the action of $\dot{U}_q(\mathfrak{gl}_n)$, induced by the specialization $g=q^t,q^{m}=q^{a_m}$.

    Thus, we have reduced to the question of showing that no nonzero element $x\in U_{\Br'-\Br}\otimes_{\Bbbk[\mathbf{K}^{\pm 1}]} \Bbbk_{\Br}$ maps to zero under this specialization for all possible choices of $t,a_m$.  So, for a given $x$, we need to find some $t$ and $a_m$ such that $x$ acts non-trivially on some irreducible representation with highest weight $\lambda$ satisfying $0\leq \lambda_1\leq \cdots \leq \lambda_n\leq t$.
    We can consider the analogue of $U_{\Br}$ in the subalgebras $U_q^{\pm}$ generated by $E_i$'s and $F_i$'s respectively.    That is:
    \[U^{\pm}_{(w_1,\dots,w_n)}=\{u\in U_q^{\pm} \mid K_iuK_i^{-1}=q^{w_i}u\}.\]  First consider $x=x'x''$ for $x'\in U_{\Bw'}^{+}$ and $x''\in U_{\Bw''}^-$ for some $\Bw',\Bw''$.  We can choose a dominant weight $\lambda'$ satisfying $0\leq \lambda_1'\leq \cdots \leq \lambda_n'\leq t'$ for some $t'$ where $x'$ acts non-trivially on the highest weight vector of the corresponding highest-weight irrep $V_{\lambda'}$, and similarly a lowest weight $\lambda''$ satisfying $0\leq \lambda_n''\leq \cdots \leq \lambda_1''\leq t''$ where $x''$ acts non-trivially on the lowest weight vector of the lowest weight irrep $\Lambda_{\lambda''}$.  One can verify that this means that the action of $x=x'x''$ on their tensor product in $V_{\lambda'}\otimes \Lambda_{\lambda''}$ is non-zero.  More generally, if we write $x=\sum x_i'x_i''$ for $x_i'\in U_{\Bw_i'}^{+}$ and $x_i''\in U_{\Bw_i''}^-$, we can choose a single $t'$ and $t''$ such that all non-zero linear combinations of the $x_i'$ act non-trivially on the highest weight vector of some $V_{\lambda'}$, and similarly all non-zero linear combinations of the $x_i''$ act non-trivially on the lowest weight vector of some $\Lambda_{\lambda''}$.  This shows that the action of $x$ on the tensor product of these two irreps is non-zero.
    Choosing $t\geq t'+t''$ ensures all summands of this tensor product appear in $T$, so $x$ acts non-trivially on $T$.  
\end{proof}

We can describe the resulting basis of Hom spaces more geometrically.  Fix an object $(m_1^{\epsilon_1},\dots, m_n^{\epsilon_n})$.  Let $\alpha_{ij}=(\delta_{jk}-\delta_{ik})\in \Z^{n}$ be the usual root vector (so $\alpha_{12}=(-1,1,0,\dots)$, etc.).  For non-negative integers $b_{ij}\in \Z_{\geq 0}$ with $i\neq j\in \{1,\dots, n\}$, collected into an $n\times n$ matrix $B$ with zero diagonal, we set $w(B)=\sum_{i,j}b_{ij}\alpha_{ij}$.  

Consider the ladder diagrams \begin{equation}
\cij =\begin{cases} \begin{tikzpicture}[baseline=20]
	\laddercoordinates{4}{1}
	\node (A) at (l00) {$r_i$};
	\node (B) at (l01) {$r_i-1$};
	\node (C) at (l30) {$r_j$};
	\node (D) at (l31) {$r_j+1$};
	\node (A1) at (l10) {\phantom{$m_i$}};
	\node (B1) at (l11) {\phantom{$m_i$}};
	\node (C1) at (l20) {\phantom{$m_i$}};
	\node (D1) at (l21) {\phantom{$m_i$}};
	\node[inner sep=0pt] (i) at (0,.5 *\ladderY){};
\node[inner sep=0pt] (j) at (3*\ladderX,.5 *\ladderY){};
	\node[inner sep=5pt] (i1) at (intersection of A1--B1 and i--j){};
		\node[inner sep=5pt] (i2) at (intersection of C1--D1 and i--j){};
	\draw[mid>,very thick] (A)--(B);
	\draw[mid>,very thick] (C)--(D); 
		\draw[mid>,very thick] (A1)--(B1);
	\draw[mid>,very thick] (C1)--(D1); 
	\draw[mid>] (i)--(i1);
	\draw[mid>] (i1)--(i2);
		\draw[mid>] (i2)--(j);	
\end{tikzpicture} & i < j\\
-\begin{tikzpicture}[baseline=20]
	\laddercoordinates{4}{1}
	\node (A) at (l00) {$r_j$};
	\node (B) at (l01) {$r_j+1$};
	\node (C) at (l30) {$r_i$};
	\node (D) at (l31) {$r_i-1$};
	\node (A1) at (l10) {\phantom{$m_i$}};
	\node (B1) at (l11) {\phantom{$m_i$}};
	\node (C1) at (l20) {\phantom{$m_i$}};
	\node (D1) at (l21) {\phantom{$m_i$}};
	\node[inner sep=3pt] (i) at (0,.5 *\ladderY){};
\node[inner sep=3pt] (j) at (3*\ladderX,.5 *\ladderY){};
\node[inner sep=0pt] (ii) at (0, .5*\ladderY-.3){};
\node[inner sep=0pt] (jj) at (3*\ladderX,.5 *\ladderY+.3){};
	\node[inner sep=5pt] (i1) at (intersection of A1--B1 and i--j){};
		\node[inner sep=5pt] (i2) at (intersection of C1--D1 and i--j){};
	\draw[mid>,very thick] (A)--(i) ;
	\draw[mid>,very thick] (i)--(B);
	\draw[mid>,very thick] (C)--(D); 
		\draw[mid>,very thick] (A1)--(B1);
	\draw[mid>,very thick] (C1)--(D1); 
	\draw[mid<] (ii) to[out=180,in=-90] (-.3,.5 *\ladderY-.15) to[out=90,in=180] (0,.5 *\ladderY) to (i1);
	\draw[mid>] (jj) to[out=0,in=90] (3*\ladderX + .3,.5 *\ladderY+.15) to[out=-90,in=0]  (j);
	\draw[mid<] (i1)--(i2);
		\draw[mid<] (i2)--(j);	
\end{tikzpicture} & j < i. 
\end{cases}
\end{equation}
For a matrix $B$ as above, consider the morphism from $(r_1,\dots, r_n)$ to $(r_1+w_1(B),\dots, r_n+w_n(B))$ given by 
\[c(B)=\prod_{i=1}^n\prod_{j=1}^n \cij^{b_{ij}}.\]
It follows from the PBW theorem and \cref{Uq-injective} that:
\begin{corollary}\label{cor:spider-basis}
   The space of morphisms \[\Hom_{\Spidm{\mathbf{m},\boldsymbol{\epsilon}}{N}}((r_1,\dots, r_n), (s_1,\dots, s_n))\] has a basis over $\Bbbk$ given by the diagrams $c(B)$ for $B$ as above satisfying $w(B)=(s_1-r_1,\dots, s_n-r_n)$.
\end{corollary}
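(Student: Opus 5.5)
Via \cref{Uq-injective}, the Hom space in $\Spidm{\mathbf{m},\boldsymbol{\epsilon}}{N}$ from $\Br$ to $\Bs$ is identified with $U_{\Bs-\Br}\otimes_{\Bbbk[\mathbf{K}^{\pm 1}]}\Bbbk_{\Br}$, so it is enough to produce a $\Bbbk$-basis of this space and then check that its image under the equivalence consists of the diagrams $c(B)$, up to triangular change of basis and unit scalars.

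First, I use the PBW theorem for $U_q(\mathfrak{gl}_n;\Bbbk)$. Fix a convex ordering of the positive roots and Lusztig's root vectors $E_{ij}$ ($i<j$) and $F_{ij}$ ($i<j$). The ordered monomials $\prod E_{ij}^{a_{ij}}\cdot \mathbf{K}^{\mathbf{c}}\cdot\prod F_{ij}^{b_{ij}}$ then form a $\Bbbk$-basis, and $U$ is free over $\Bbbk[\mathbf{K}^{\pm1}]$ with basis the monomials $\prod E_{ij}^{a_{ij}}\prod F_{ij}^{b_{ij}}$. Since each such monomial is a weight vector, $U_{\mathbf{w}}$ is free over $\Bbbk[\mathbf{K}^{\pm 1}]$ on the monomials of weight $\mathbf{w}$. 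Tensoring with $\Bbbk_{\Br}$ therefore gives a $\Bbbk$-basis of the Hom space, indexed by matrices $B$ with zero diagonal whose entries record the exponents of the root vectors and which satisfy $w(B)=\Bs-\Br$.

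Next, I identify the images. A single rung between uprights $i$ and $j$ passing under/over the intermediate uprights is, by the braiding relations \eqref{eq:thick-braid1}--\eqref{eq:thick-braid4} (transported to heavy–light crossings), exactly the image of a quantum root vector. Concretely, $E_{ij}=E_iE_{i+1,j}-q^{\pm1}E_{i+1,j}E_i$ is a $q$-commutator, and expanding the crossings of a rung with intermediate uprights produces precisely such two-term combinations. So $\cij$ is, up to a unit (sign or power of $q$), the image of the root vector $E_{ij}$ or $F_{ji}$ on the appropriate weight space. The sign in the definition of $\cij$ for $j<i$ accounts for the cup/cap convention. Thus $c(B)$ equals the image of the corresponding PBW monomial times a unit, possibly in a different ordering of the factors.

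Finally, I handle the ordering. The product $c(B)=\prod_i\prod_j\cij^{b_{ij}}$ uses a fixed ordering, which may differ from the PBW ordering and may interleave $E$'s and $F$'s. The standard Levendorskii–Soibelman straightening shows that reordering root vectors changes a monomial by a unit times the reordered monomial plus terms lower in a filtration (by total degree, or by height of weights in the $E$/$F$ parts). So the transition matrix between $\{c(B)\}$ and the PBW basis is unitriangular up to units, and $\{c(B)\}$ is again a basis. The step I expect to require the most care is matching $\cij$ with root vectors precisely, that is, checking the braid expansion with heavy intermediate strands and orientations. I would argue this by induction on $j-i$ using \eqref{eq:thick-braid1}, and reduce to the known light case via the faithful specialization family from the proof of \cref{Uq-injective}.
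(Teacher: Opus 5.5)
Your proposal is correct and follows the paper's own argument. The paper obtains the corollary in one line from the PBW theorem and the equivalence of \cref{Uq-injective}; you have made explicit the identification of each $c_{ij}$, up to a unit, with a quantum root vector, and the reordering needed because $c(B)$ interleaves $E$-type and $F$-type factors. One small correction to that reordering step: total degree only handles moving $E$-type factors past $F$-type ones, because reordering root vectors of the same sign can preserve both the number of factors and the weight (in $U_q(\mathfrak{gl}_4)$, $E_{13}E_{24}-E_{24}E_{13}$ is a nonzero multiple of $E_{14}E_{23}$), so for that part you need the De Concini--Kac filtration, whose associated graded algebra is $q$-commutative.
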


\subsection{Braids and cups}\label{sec:braid}

There is an important structure arising from braiding heavy strands, even though we do not obtain a braided monoidal structure. 
Braids naturally act on $n$-tuples $\mathbf{m},\boldsymbol{\epsilon}$ by permutation, and in \cite[Lem. 7.3]{queffelecMixedQuantum2019}, Queffelec and Sartori show that there is a braid equivalence $\mathsf{T}_{\beta}\colon \dot U_{\mathbf{m},\boldsymbol{\epsilon}}\to \dot U_{\beta\mathbf{m},\beta\boldsymbol{\epsilon}}$ based on Lusztig's braid group action on the quantum group.  
We can therefore transport this to an equivalence $\mathsf{T}_{\beta}\colon\Spidm{\mathbf{m},\boldsymbol{\epsilon}}{N}\to \Spidm{\beta\mathbf{m},\beta\boldsymbol{\epsilon}}{N}$\notation{$\mathsf{T}_{\beta}$}{The equivalence $\mathsf{T}_{\beta}\colon\Spidm{\mathbf{m},\boldsymbol{\epsilon}}{N}\to \Spidm{\beta\mathbf{m},\beta\boldsymbol{\epsilon}}{N}$ induced by a braid $\beta$.}. Geometrically, this amounts to conjugating a ladder diagram by braiding the heavy uprights and then straightening the uprights.  This creates crossings between rungs and uprights, which we convert into ladders using \crefrange{eq:thick-skein1}{eq:thick-skein2}; this reproduces the formulas of \cite[(7.5)]{queffelecMixedQuantum2019} up to sign.  

Note that if we interpret our category $\Spl{\mathbf{m},\boldsymbol{\epsilon}}{N}$ as factorization homology with marked points as in \cref{rem:skein-algebra}, then this is just the action of the mapping class group of the marked surface on this factorization homology.  

In fact, this action is most naturally viewed as an action of framed braids, where we may add any number of twists to each strand.  This simply multiplies each diagram by a power of $-g\ourmu_i^{-2}$, but it is useful for bookkeeping.

Similarly, while we cannot define a duality between heavy strands like we can between light strands with labels $k^{\pm}$, an important shadow of this structure exists in the form of modules over the category $\Spidm{\mathbf{m},\boldsymbol{\epsilon}}{N}$. 

Let us first describe this in the case of a single cup.  

We consider the category $\dot U_{\mathbf{m},\boldsymbol{\epsilon}}$ for $\mathbf{m}=(m,m)$ and $\boldsymbol{\epsilon}=(+,-)$.  In this case, there is a module $C_m$\notation{$C_m$}{The module over $\Spidm{(m,m),(+,-)}{N}$ corresponding to a single cup.} over $\dot U_{\mathbf{m},\boldsymbol{\epsilon}}$ sending the object $(a,b)$ to a 1-dimensional vector space with basis $v_p$ if $(a,b)=((p+m)^+,(p+m)^-)$ and to 0 otherwise, with the action:
\begin{equation}\label{eq:cup-action}
	E v_p = \frac{q^{p+1}\ourmu-q^{-p-1}\ourmu^{-1}}{q-q^{-1}} v_{p+1},\qquad F v_p = \frac{gq^{-p+1}\ourmu^{-1}-g^{-1}q^{p-1}\ourmu}{q-q^{-1}} v_{p-1}.
\end{equation} 

We visualize this more simply by representing $v_p$ as a single cup joining uprights with labels $(p+m)^+$ and $(p+m)^-$.  Thus the actions of $E$ and $F$ come from stringing a single light strand across the top of the cup either oriented left to right or the opposite: 
\begin{gather}\label{E-cup-action}
	\begin{tikzpicture}[baseline=20,xscale=1.3]
	\laddercoordinates{1}{1}
	\node[inner sep=0pt] (A) at (.5*\ladderX,-.5) {};
	\draw[mid>,very thick] (u00) -- ($(l00)+(0,\ladderY)$) node[above, scale=.75] {$(m+p+1)^+$};
\draw[mid<,very thick] ($(d00)+(\ladderX,0)$) -- ($(l00)+(\ladderX,\ladderY)$) node[above, scale=.75] {$(m+p+1)^-$};
\draw[mid>] ($(d00)+(\ladderX,0)$) -- (u00);
\draw[mid>, very thick] ($(d00)+(\ladderX,0)$) to (l10) to[out=-90,in=0](A) to[out=180,in=-90] (l00) -- (u00);
\end{tikzpicture}=  \frac{q^{p+1}\ourmu-q^{-p-1}\ourmu^{-1}}{q-q^{-1}} \quad		\begin{tikzpicture}[baseline=20,xscale=1.3]
	\laddercoordinates{1}{1}
	\node[inner sep=0pt] (A) at (.5*\ladderX,-.5) {};
	\draw[mid>,very thick] (u00) -- ($(l00)+(0,\ladderY)$) node[above, scale=.75] {$(m+p+1)^+$};
\draw[mid<,very thick] ($(d00)+(\ladderX,0)$) -- ($(l00)+(\ladderX,\ladderY)$) node[above, scale=.75] {$(m+p+1)^-$};
	 \draw[mid>, very thick] ($(d00)+(\ladderX,0)$) to (l10) to[out=-90,in=0](A) to[out=180,in=-90] (l00) -- (u00);
\end{tikzpicture} \;,\\ \label{F-cup-action}
	\begin{tikzpicture}[baseline=20,xscale=1.3]
	\laddercoordinates{1}{1}
	\node[inner sep=0pt] (A) at (.5*\ladderX,-.5) {};
	\draw[mid>,very thick] (u00) -- ($(l00)+(0,\ladderY)$) node[above, scale=.75] {$(m+p-1)^+$};
\draw[mid<,very thick] ($(d00)+(\ladderX,0)$) -- ($(l00)+(\ladderX,\ladderY)$) node[above, scale=.75] {$(m+p-1)^-$};
\draw[mid<] ($(u00)+(\ladderX,0)$) -- (d00);
\draw[mid>, very thick] ($(d00)+(\ladderX,0)$) to (l10) to[out=-90,in=0](A) to[out=180,in=-90] (l00) -- (u00);
\end{tikzpicture}=  \frac{gq^{-p+1}\ourmu^{-1}-g^{-1}q^{p-1}\ourmu}{q-q^{-1}} \quad		\begin{tikzpicture}[baseline=20,xscale=1.3]
	\laddercoordinates{1}{1}
	\node[inner sep=0pt] (A) at (.5*\ladderX,-.5) {};
	\draw[mid>,very thick] (u00) -- ($(l00)+(0,\ladderY)$) node[above, scale=.75] {$(m+p-1)^+$};
\draw[mid<,very thick] ($(d00)+(\ladderX,0)$) -- ($(l00)+(\ladderX,\ladderY)$) node[above, scale=.75] {$(m+p-1)^-$};
\draw[mid>, very thick] ($(d00)+(\ladderX,0)$) to (l10) to[out=-90,in=0](A) to[out=180,in=-90] (l00) -- (u00);
\end{tikzpicture} \;.
\end{gather}
We can transport this to a module over $\Spidm{\mathbf{m},\boldsymbol{\epsilon}}{N}$ by the equivalence of \cref{Uq-injective}.  
It will be important for us to extend this to a module $\widetilde{C}_m$ over $\Spl{\mathbf{m},\boldsymbol{\epsilon}}{N}$.  Geometrically, this means that we allow ourselves to add light uprights with label $0$ on either side of the diagram or in the middle of the cup, and allow rungs to join them from the heavy cup or to form light cups and caps.  We impose the local relations of \cref{def:SpidM} on these, and furthermore, allow light strands to isotope across the minimum of the heavy cup:
\begin{equation}
    \label{eq:cup-isotope}
	\begin{tikzpicture}[baseline=20,xscale=1.3]
	\laddercoordinates{1}{1}
	\node[inner sep=0pt] (A) at (.5*\ladderX,-.5) {};
	\draw[mid>,very thick] (u00) -- ($(l00)+(0,\ladderY)$) node[above, scale=.75] {$(m+p+1)^+$};
\draw[mid<,very thick] ($(d00)+(\ladderX,0)$) -- ($(l00)+(\ladderX,\ladderY)$) node[above, scale=.75] {$(m+p+1)^-$};
\draw[mid>] ($(d00)+(\ladderX,-.5*\ladderY)$) -- ($(u00)+({.5*\ladderX},-.25*\ladderY)$)--($(l00)+({.5*\ladderX},\ladderY)$);
\draw[mid>, very thick] ($(d00)+(\ladderX,0)$) to (l10) to[out=-90,in=0](A) to[out=180,in=-90] (l00) -- (u00);
\end{tikzpicture}= 		\begin{tikzpicture}[baseline=20,xscale=1.3]
	\laddercoordinates{1}{1}
	\node[inner sep=0pt] (A) at (.5*\ladderX,-.5) {};
    \draw[mid>] ($(d00)+(0,-.5*\ladderY)$) -- ($(u00)+({.5*\ladderX},-.25*\ladderY)$)--($(l00)+({.5*\ladderX},\ladderY)$);
	\draw[mid>,very thick] (u00) -- ($(l00)+(0,\ladderY)$) node[above, scale=.75] {$(m+p+1)^+$};
\draw[mid<,very thick] ($(d00)+(\ladderX,0)$) -- ($(l00)+(\ladderX,\ladderY)$) node[above, scale=.75] {$(m+p+1)^-$};
	 \draw[mid>, very thick] ($(d00)+(\ladderX,0)$) to (l10) to[out=-90,in=0](A) to[out=180,in=-90] (l00) -- (u00);
\end{tikzpicture} \;.
\end{equation}
The relations \cref{E-cup-action,F-cup-action} are a consequence of these relations and \cref{eq:bigon2}.

We can also horizontally compose several of these cups to obtain a module $\widetilde{C}_{m_1,\dots, m_k}$ over $\Spl{\mathbf{m},\boldsymbol{\epsilon}}{N}$ for $\mathbf{m}=(m_1,m_1,m_2,m_2,\dots, m_k,m_k)$ for some choice of $m_1,\dots, m_k$ and $\boldsymbol{\epsilon}=(+,-,+,-,\cdots,+,-)$.
That is, we take the external tensor product of the modules $\widetilde{C}_{m_i}$ over the categories $\mathsf{Spl}_{m_i}=\Spl{(m_i,m_i),(+,-)}{N}$ and then consider the induced module
\begin{equation*}
\widetilde{C}_{m_1,\dots, m_k}=\Spl{\mathbf{m},\boldsymbol{\epsilon}}{N}\otimes_{\mathsf{Spl}_{m_1}\otimes\cdots \otimes \mathsf{Spl}_{m_k}}(\widetilde{C}_{m_1}\otimes \cdots \otimes \widetilde{C}_{m_k}).
\end{equation*}
Geometrically, we can represent this as a sequence of $k$ cups, each of which is labeled with $(m_i+p_i)^+$ and $(m_i+p_i)^-$ for different choices of $p_i\in \Z$ at the bottom of the cup, with additional light strands joining the different cups: 
\begin{equation}
		\begin{tikzpicture}[baseline=-5]
	\laddercoordinates{1}{0}
	\node[inner sep=0pt] (A) at (.5*\ladderX,-.5) {};
	 \draw[mid>, very thick] (l10)  
	 to [in=0,out=-90] node[at start, above, scale=.75] {$(m_1+p_1)^-$}(A) to[out=180,in=-90] node[at end, above , scale=.75]{$(m_1+p_1)^+$}(l00);
\end{tikzpicture} 	\quad 	\begin{tikzpicture}[baseline=-5]
	 \draw[mid>, very thick] (l10)  
	 to [in=0,out=-90] node[at start, above, scale=.75] {$(m_2+p_2)^-$}(A) to[out=180,in=-90] node[at end, above , scale=.75]{$(m_2+p_2)^+$}(l00);
\end{tikzpicture}\quad\cdots \quad 	\begin{tikzpicture}[baseline=-5]
	 \draw[mid>, very thick] (l10)  
	 to [in=0,out=-90] node[at start, above, scale=.75] {$(m_k+p_k)^-$}(A) to[out=180,in=-90] node[at end, above , scale=.75]{$(m_k+p_k)^+$}(l00);
\end{tikzpicture} \;.
\end{equation}
We obtain a module $C_{m_1,\dots, m_k}$ over $\Spidm{\mathbf{m},\boldsymbol{\epsilon}}{N}$ by only considering the diagrams with no light strands at the top;  we need to define this module over $\Spl{\mathbf{m},\boldsymbol{\epsilon}}{N}$ first so that we can use the relation \cref{eq:cup-isotope} regardless of what the diagram on top of it looks like.

We can extend this construction by twisting the action on the module $C_{m_1,\dots, m_k}$ by the automorphism $\mathsf{T}_{\beta}$ attached to a framed braid $\beta$.  This allows us to attach a right module ${}_TC$ to any framed tangle $T$ with only maxima; an analogous construction attaches a left module $C_{T'}$ to a tangle $T'$ with only minima.\notation{${}_TC,C_{T'}$}{The $\Spidm{\mathbf{m},\boldsymbol{\epsilon}}{N}$ module corresponding to a tangle $T$ with only maxima (right module ${}_TC$) or a tangle $T'$ with only minima (left module $C_{T'}$).}  

\begin{lemma}\label{lem:only-tangle}
  The module attached to a framed tangle with only minima or only maxima is independent of the choice of braid $\beta$ representing that tangle---that is, it is an invariant of the tangle up to isotopy.  
\end{lemma}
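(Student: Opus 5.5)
The plan is to reduce the statement to a presentation of the relevant stabilizer subgroup of the framed braid group, and then check the generators one at a time. I treat a tangle with only minima (the case of maxima is the mirror image). Any such framed tangle on $2k$ endpoints can be written as $\beta$ applied to the standard configuration of $k$ nested-free cups, that is, to the module $C_{m_1,\dots,m_k}$. By Hilden's theorem in its framed form, two framed braids $\beta,\beta'$ give isotopic tangles if and only if $\beta^{-1}\beta'$ lies in the framed Hilden subgroup $H_{2k}$. This is the stabilizer of the standard cup configuration, up to isotopy rel boundary. So it suffices to produce, for each generator $h$ of $H_{2k}$, an isomorphism of modules $C_{m_1,\dots,m_k}\circ\mathsf{T}_h\cong C_{h(m_1,\dots,m_k)}$. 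These isomorphisms should be compatible enough that they compose; then any two choices $\beta,\beta'$ give isomorphic modules.

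I would use the standard generators of $H_{2k}$:
\begin{enumerate}
\item the framing twist on a single strand;
\item the half twist $\sigma_{2i-1}$ of the two endpoints of one cup;
\item the exchange $\sigma_{2i}\sigma_{2i-1}\sigma_{2i+1}\sigma_{2i}$ of two adjacent cups;
\item the move $\sigma_{2i}\sigma_{2i-1}^{2}\sigma_{2i}$ (or its variant), which passes one cup through the arc of its neighbour.
\end{enumerate}
Generator (1) only rescales by the scalar $-g\ourmu_i^{-2}$, so it is immediate.

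For the remaining generators I would not compute with Lusztig's formulas directly. Instead I would use the geometric description of $\mathsf{T}_\beta$: conjugate a ladder by the heavy braid, then resolve the rung/upright crossings using \crefrange{eq:thick-skein1}{eq:thick-skein2}. Since the $C$'s are modules over $\Spidm{\mathbf{m},\boldsymbol{\epsilon}}{N}$, it suffices to check the action of the generators $E_i,F_i$ of \cref{Uq-injective}, since these generate by fullness. Each weight space of $C_{m_1,\dots,m_k}$ is one-dimensional, spanned by the cup diagram with bottom labels $(m_i+p_i)^{\pm}$. So a candidate isomorphism is a rescaling $v_{\mathbf p}\mapsto c_{\mathbf p}v_{h(\mathbf p)}$, and I must show that the twisted $E_i,F_i$ actions agree with the untwisted ones up to such a rescaling.

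The key tool here is the extended module $\widetilde{C}$ over $\Spl{\mathbf{m},\boldsymbol{\epsilon}}{N}$ together with the relation \cref{eq:cup-isotope}. For each generator $h$, the diagram $\mathsf{T}_h(x)\cdot v$ is the rung $x$ conjugated by the heavy braid $h$ and placed atop the cups. Since $h$ fixes the cup configuration up to isotopy, the light rung in this diagram can be slid around the cup minima, using \cref{eq:cup-isotope} and the light--heavy braid relations \crefrange{eq:thick-braid1}{eq:thick-braid4}, until it becomes $x$ on the standard cups. The only discrepancies should be scalars: framing factors from the half twist in generator (2), and powers of $q^{\pm p}\ourmu^{\pm1}$ arising from light strands crossing heavy ones. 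These scalars determine $c_{\mathbf p}$.

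I expect the main obstacle to be generator (4), where one cup passes through another. In that case the light strand must wind around a heavy cup, and sliding it back requires combining \cref{eq:cup-isotope} with both skein relations. The check is then that the accumulated scalars are consistent, meaning independent of the order in which $E$'s and $F$'s are applied. I would first verify this for $k=2$ directly against \cref{eq:cup-action}, then deduce the general case by locality of the induced-module construction. Finally, I would check that the isomorphisms are natural in the choice of word, which follows because each was given by a canonical diagrammatic isotopy.
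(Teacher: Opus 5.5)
Your strategy matches the paper's: reduce to generators of the stabilizer of the standard cups, and check each one by sliding light strands over the cup minima using \cref{eq:cup-isotope}. The paper checks only two moves, the full twist of one cup and the crossing of one cup's pair of strands over its neighbour's, and says both follow ``by easy isotopy arguments.'' Your generator (4) is a needed addition to that list. The paper's two moves generate only the cabled framed braid group of the cups. In any pure element of that group, strand $1$ links strands $3$ and $4$ equally. In $\sigma_2\sigma_1^2\sigma_2$, strand $3$ encircles the first cup, so strand $1$ links strand $3$ once and strand $4$ not at all. This element is handled by sliding the loop off the first cup's minimum.

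Generator (2), however, does not belong in your list. The cups are oriented with sign sequence $(+,-)$, and $\sigma_{2i-1}$ changes this to $(-,+)$. So $C_{m_1,\dots,m_k}$ twisted by $\mathsf{T}_{\sigma_{2i-1}}$ is a module over $\Spidm{\mathbf{m},\boldsymbol{\epsilon}'}{N}$ for a different $\boldsymbol{\epsilon}'$, and it cannot be isomorphic to any $C_{\mathbf{m}'}$. Geometrically it represents the cup with reversed orientation, which is a different oriented tangle. The group you need is the orientation-preserving subgroup of the framed Hilden group, which has index $2^k$. It contains $\sigma_{2i-1}^2$ (the paper's first check) but not $\sigma_{2i-1}$. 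You would also have to justify a generating set for this subgroup, for example by Reidemeister--Schreier applied to Hilden's generators.

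Your second problem is the claim that every weight space of $C_{m_1,\dots,m_k}$ is one-dimensional. This is false for $k\geq 2$. A rung from one cup to another and back gives a vector of the same weight as the bare cups, and it is independent of them; compare the basis by monomials in the $a_{ij}$ used in the proof of \cref{lem:Z-isomorphism}. So an isomorphism need not be a rescaling of bare cups. Moreover, checking $E_i,F_i$ against \cref{eq:cup-action}, which describes a single cup, says nothing about rungs between cups, and that is exactly where generators (3) and (4) act.

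The repair is to use that the module is generated by the bare cups. It then suffices to show the following for every ladder $x$, not just for $E_i,F_i$ acting on bare cups: your sliding argument turns $\mathsf{T}_h(x)\cdot v$ into $x\cdot v'$, where $v'$ is the relabeled bare cups, up to an invertible scalar depending only on the labels. This shows that $\mathsf{T}_h$ carries the annihilator of the bare cups onto that of the relabeled bare cups, and hence gives the isomorphism. That identity is what the paper's ``isotopy argument'' amounts to. Your final naturality check is unnecessary, since isomorphisms for generators compose automatically once the action is twisted.
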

\begin{proof}
	To prove this, we only need to check that:
	\begin{enumerate}
		\item A full twist of the two strands at the top of a single cup sends $C_{m_1,\dots, m_k}$ to itself. 
		\item A crossing of two strands at the top of the $j$th cup over the strands at the top of the $(j+1)$st cup sends $C_{m_1,\dots,m_k}$ to $C_{m_1,\dots, m_{j+1},m_j,\dots,m_k}$.  
 	\end{enumerate}
 	Both of these follow from easy isotopy arguments.  
\end{proof}

\subsection{Planar limit}

We now consider how this construction behaves in the limit $N\to \infty$ and $q\to 1$, with $g=q^N$ fixed.  
We have already done half the work by treating $g$ as an independent parameter, so only the limit $q\to 1$ remains.  
To do this, we work over the base ring $\Bbbk=\C(q)[g^{\pm 1},q^{\pm m}]_{m\in M}$. 
Since this ring has no maximal ideal defined by $q=1$, we instead pass to the subring $\Bbbk'=\C[q]_{(q-1)}[g^{\pm 1},q^{\pm m}]_{m\in M}$\notation{$\Bbbk'$}{The subring $\C[q]_{(q-1)}[g^{\pm 1},q^{\pm m}]_{m\in M}\subset \Bbbk$ where we only use rational functions regular at $q=1$.} and the corresponding subcategory of $\Spidm{\mathbf{m},\boldsymbol{\epsilon}}{N}$.  
Equivalently, we replace $\C(q)$ by the subring of rational functions with no pole at $q=1$; note that this contains the $q$-binomial coefficients $\qBinomial{k}{l}$ for all $k,l\in \Z_{\geq 0}$, and the inverses of these coefficients whenever the corresponding binomial coefficient is nonzero.   
There are obvious difficulties in doing this, caused by the appearance of $q-q^{-1}$ in the denominators in numerous places.  However, we can effectively just clear denominators everywhere and still obtain a well-defined limit.

Thus, in our diagrams, we will use a blue color and a dot-dash pattern on a rung in a ladder to indicate that the rung is multiplied by $q-q^{-1}$. 
\begin{center}
\fbox{\parbox{0.85\textwidth}{\textbf{Convention:} A \textcolor{blue}{blue, dot-dashed rung} in a diagram indicates multiplication by $(q-q^{-1})$. This allows us to work over the subring $\Bbbk'$ where $q=1$ is well-defined.}}
\end{center}
Of course, since this is just a scaling factor, only the number of blue rungs is significant, but since the power of $q-q^{-1}$ we need to multiply by is often proportional to the number of rungs in the ladder, this will prove to be a convenient shorthand.  In particular, the equations \crefrange{eq:thick-skein1}{eq:thick-skein2} become the skein relation for blue rungs:  
\begin{align}\label{eq:thick-blue-skein1}
	 \begin{tikzpicture}[baseline=23,scale=.6]
    \node (C) at (0,3) {$m$};
    \node (D) at (0,0) {$m$};
    \node (A) at (-1,3)  {$1$};
    \node (B) at (1,0){$1$};
    \node (i) at (intersection of A--B and D--C) {};
    \draw[rung>] (B) -- (A);
    \draw[mid>, very thick] (D) -- (i);
    \draw[mid>, very thick] (i) -- (C);
  \end{tikzpicture}-\begin{tikzpicture}[baseline=23,scale=.6]
    \node (C) at (0,3) {$m$};
    \node (D) at (0,0) {$m$};
    \node (A) at (-1,3)  {$1$};
    \node (B) at (1,0){$1$};
    \node (i) at (intersection of A--B and D--C) {};
    \draw[mid>,very thick] (D) -- (C);
    \draw[rung>] (B) -- (i);
    \draw[rung>] (i) -- (A);
  \end{tikzpicture} &=   \begin{tikzpicture}[baseline=23,scale=.6]
    \laddercoordinates{1}{2}
    \node (C) at (0,3) {$m$};
    \node (D) at (0,0) {$m$};
    \node (A) at (-1,3)  {$1$};
    \node (B) at (1,0){$1$};
     \node[inner sep=0pt] (i) at (0,1) {};
          \node[inner sep=0pt] (j) at (0,2){};
              \draw[rung>] (B) -- (j);
    \draw[rung>] (i) -- (A);
    \draw[mid>, very thick] (D)-- (C);
  \end{tikzpicture} \;,\\\label{eq:thick-blue-skein2}
  \begin{tikzpicture}[baseline=23,scale=.6]
    \node (C) at (1,3) {$m$};
    \node (D) at (-1,0) {$m$};
    \node (A) at (0,3)  {$1$};
    \node (B) at (0,0){$1$};
    \node (i) at (intersection of A--B and D--C) {};
    \draw[rung>] (D) -- (C);
    \draw[mid>,very thick] (B) -- (i);
    \draw[mid>,very thick] (i) -- (A);
  \end{tikzpicture} -   \begin{tikzpicture}[baseline=23,scale=.6]
    \node (C) at (0,3) {$m$};
    \node (D) at (0,0) {$m$};
    \node (A) at (1,3)  {$1$};
    \node (B) at (-1,0){$1$};
    \node (i) at (intersection of A--B and D--C) {};
    \draw[mid>,very thick] (D) -- (C);
    \draw[rung>] (B) -- (i);
    \draw[rung>] (i) -- (A);
  \end{tikzpicture}&=  - \begin{tikzpicture}[baseline=23,scale=.6]
    \laddercoordinates{1}{2}
    \node (C) at (0,3) {$m$};
    \node (D) at (0,0) {$m$};
    \node (A) at (1,3)  {$1$};
    \node (B) at (-1,0){$1$};
     \node[inner sep=0pt] (i) at (0,1) {};
          \node[inner sep=0pt] (j) at (0,2){};
              \draw[rung>] (B) -- (j);
    \draw[rung>] (i) -- (A);
    \draw[mid>, very thick] (D)-- (C);
  \end{tikzpicture} \;.
\end{align}
The corresponding $\C[q,q^{-1}]$ lattice in $U_q(\mathfrak{gl}_k)$ is denoted by $U_q(\mathfrak{gl}_k)'$ and its construction is discussed in \cite[\S 7.3]{gavariniGlobalQuantum2012}. Recall that an \textbf{integral form} of a quantum group is a subalgebra defined over the Laurent polynomials $\C[q,q^{-1}]$ rather than the field $\C(q)$; this allows us to specialize $q$ to non-zero complex numbers (such as $q=1$) while maintaining algebraic structure. The limit as $q\to 1$ of this algebra is commutative and can be identified with the ring of functions on the dual group $GL_k^*$; as an algebraic variety, this is isomorphic to $(\C^{\times})^k \times \C^{k(k-1)}$.  The coordinates for the first $k$ factors are the elements $K_i^{\pm 1}$ of the Cartan of $U_q(\mathfrak{gl}_k)$, and the coordinates for the remaining $k(k-1)$ factors are the generators corresponding to the roots of $\mathfrak{gl}_k$; in terms of diagrams, these are given by light strands connecting all pairs of distinct uprights.  
 For example, we can take:
\begin{equation}\label{eq:aij-def}
	 \aij =\begin{cases} \begin{tikzpicture}[baseline=20]
	\laddercoordinates{4}{1}
	\node (A) at (l00) {$m_i$};
	\node (B) at (l01) {$m_i$};
	\node (C) at (l30) {$m_j$};
	\node (D) at (l31) {$m_j$};
	\node (A1) at (l10) {\phantom{$m_i$}};
	\node (B1) at (l11) {\phantom{$m_i$}};
	\node (C1) at (l20) {\phantom{$m_i$}};
	\node (D1) at (l21) {\phantom{$m_i$}};
	\node[inner sep=0pt] (i) at (0,.5 *\ladderY){};
\node[inner sep=0pt] (j) at (3*\ladderX,.5 *\ladderY){};
	\node[inner sep=5pt] (i1) at (intersection of A1--B1 and i--j){};
		\node[inner sep=5pt] (i2) at (intersection of C1--D1 and i--j){};
	\draw[mid>,very thick] (A)--(B);
	\draw[mid>,very thick] (C)--(D); 
		\draw[mid>,very thick] (A1)--(B1);
	\draw[mid>,very thick] (C1)--(D1); 
	\draw[rung>] (i)--(i1);
	\draw[rung>] (i1)--(i2);
		\draw[rung>] (i2)--(j);	
\end{tikzpicture} & i < j\\
-\begin{tikzpicture}[baseline=20]
	\laddercoordinates{4}{1}
	\node (A) at (l00) {$m_j$};
	\node (B) at (l01) {$m_j$};
	\node (C) at (l30) {$m_i$};
	\node (D) at (l31) {$m_i$};
	\node (A1) at (l10) {\phantom{$m_i$}};
	\node (B1) at (l11) {\phantom{$m_i$}};
	\node (C1) at (l20) {\phantom{$m_i$}};
	\node (D1) at (l21) {\phantom{$m_i$}};
	\node[inner sep=3pt] (i) at (0,.5 *\ladderY){};
\node[inner sep=3pt] (j) at (3*\ladderX,.5 *\ladderY){};
\node[inner sep=0pt] (ii) at (0, .5*\ladderY-.3){};
\node[inner sep=0pt] (jj) at (3*\ladderX,.5 *\ladderY+.3){};
	\node[inner sep=5pt] (i1) at (intersection of A1--B1 and i--j){};
		\node[inner sep=5pt] (i2) at (intersection of C1--D1 and i--j){};
	\draw[mid>,very thick] (A)--(i) ;
	\draw[mid>,very thick] (i)--(B);
	\draw[mid>,very thick] (C)--(D); 
		\draw[mid>,very thick] (A1)--(B1);
	\draw[mid>,very thick] (C1)--(D1); 
	\draw[rung<] (ii) to[out=180,in=-90] (-.3,.5 *\ladderY-.15) to[out=90,in=180] (0,.5 *\ladderY) to (i1);
	\draw[rung] (jj) to[out=0,in=90] (3*\ladderX + .3,.5 *\ladderY+.15) to[out=-90,in=0]  (j);
	\draw[rung<] (i1)--(i2);
		\draw[rung<] (i2)--(j);	
\end{tikzpicture} & j < i. 
\end{cases}
\end{equation}
Unlike in $\mathfrak{gl}_k$ itself, there is no canonical choice of these elements, since we need to decide which path between two uprights to take, as well as the choice of framing, which is necessary to consistently apply skein relations. 

\begin{definition}
  Let $\Spidq{\mathbf{m},\boldsymbol{\epsilon}}{N}$\notation{$\Spidq{\mathbf{m},\boldsymbol{\epsilon}}{N}$}{The $\Bbbk'$-lattice in $\Spidm{\mathbf{m},\boldsymbol{\epsilon}}{N}$ generated by the diagrams $\aij$.} be the subcategory of 
$\Spidm{\mathbf{m},\boldsymbol{\epsilon}}{N}$ generated over $\Bbbk'$ by the diagrams $\aij$ above, and let $\Spidp{\mathbf{m},\boldsymbol{\epsilon}}{N}$ be its specialization at $q=1$.\notation{$\Spidp{\mathbf{m},\boldsymbol{\epsilon}}{N}$}{The specialization at $q=1$ of the $\Bbbk'$-lattice $\Spidq{\mathbf{m},\boldsymbol{\epsilon}}{N}$.}  
\end{definition}

One can easily prove that an analogue of \cref{cor:spider-basis} holds in $\Spidq{\mathbf{m},\boldsymbol{\epsilon}}{N}$.

\subsection{Tangles and knots}
The braid action and cup modules also descend to this limit to give $\Spidq{\mathbf{m},\boldsymbol{\epsilon}}{N}$ modules $C'_{m_1,\dots, m_k}$\notation{$C'_{m_1,\dots, m_k}$}{The $\Spidq{\mathbf{m},\boldsymbol{\epsilon}}{N}$-module obtained by specializing $C_{m_1,\dots, m_k}$.} generated by the diagram of the heavy tangle with no light strands.  As before, we use the shorthand $\ourmu=q^{m}$.  
The equation \cref{eq:cup-isotope} holds without change, and the action
\crefrange{E-cup-action}{F-cup-action} becomes
\begin{gather}\label{E-blue-cup}
	\begin{tikzpicture}[baseline=20,xscale=1.3]
	\laddercoordinates{1}{1}
	\node[inner sep=0pt] (A) at (.5*\ladderX,-.5) {};
	\draw[mid>,very thick] (u00) -- ($(l00)+(0,\ladderY)$) node[above, scale=.75] {$(m+p+1)^+$};
\draw[mid<,very thick] ($(d00)+(\ladderX,0)$) -- ($(l00)+(\ladderX,\ladderY)$) node[above, scale=.75] {$(m+p+1)^-$};
\draw[rung>] ($(d00)+(\ladderX,0)$) -- (u00);
\draw[mid>, very thick] ($(d00)+(\ladderX,0)$) to (l10) to[out=-90,in=0](A) to[out=180,in=-90] (l00) -- (u00);
\end{tikzpicture}=  (q^{p+1}\ourmu-q^{-p-1}\ourmu^{-1}) \quad		\begin{tikzpicture}[baseline=20,xscale=1.3]
	\laddercoordinates{1}{1}
	\node[inner sep=0pt] (A) at (.5*\ladderX,-.5) {};
	\draw[mid>,very thick] (u00) -- ($(l00)+(0,\ladderY)$) node[above, scale=.75] {$(m+p+1)^+$};
\draw[mid<,very thick] ($(d00)+(\ladderX,0)$) -- ($(l00)+(\ladderX,\ladderY)$) node[above, scale=.75] {$(m+p+1)^-$};
	 \draw[mid>, very thick] ($(d00)+(\ladderX,0)$) to (l10) to[out=-90,in=0](A) to[out=180,in=-90] (l00) -- (u00);
\end{tikzpicture} \;,\\\label{F-blue-cup}
	\begin{tikzpicture}[baseline=20,xscale=1.3]
	\laddercoordinates{1}{1}
	\node[inner sep=0pt] (A) at (.5*\ladderX,-.5) {};
	\draw[mid>,very thick] (u00) -- ($(l00)+(0,\ladderY)$) node[above, scale=.75] {$(m+p-1)^+$};
\draw[mid<,very thick] ($(d00)+(\ladderX,0)$) -- ($(l00)+(\ladderX,\ladderY)$) node[above, scale=.75] {$(m+p-1)^-$};
\draw[rung<] ($(u00)+(\ladderX,0)$) -- (d00);
\draw[mid>, very thick] ($(d00)+(\ladderX,0)$) to (l10) to[out=-90,in=0](A) to[out=180,in=-90] (l00) -- (u00);
\end{tikzpicture}=  (gq^{-p+1}\ourmu^{-1}-g^{-1}q^{p-1}\ourmu) \quad		\begin{tikzpicture}[baseline=20,xscale=1.3]
	\laddercoordinates{1}{1}
	\node[inner sep=0pt] (A) at (.5*\ladderX,-.5) {};
	\draw[mid>,very thick] (u00) -- ($(l00)+(0,\ladderY)$) node[above, scale=.75] {$(m+p-1)^+$};
\draw[mid<,very thick] ($(d00)+(\ladderX,0)$) -- ($(l00)+(\ladderX,\ladderY)$) node[above, scale=.75] {$(m+p-1)^-$};
\draw[mid>, very thick] ($(d00)+(\ladderX,0)$) to (l10) to[out=-90,in=0](A) to[out=180,in=-90] (l00) -- (u00);
\end{tikzpicture} \;.
\end{gather}

Given a link, there is a familiar notion of writing it as the closure of a braid.  That is, we can take a braid $\beta$ and close it by connecting the strands on the top and bottom in order; we represent this closure on the page by sweeping the strands to the right.

Since all skein-theoretic constructions require a framing, we will always equip a link with the zero-framing, characterized by the condition that each component has linking number 0 with a copy pushed off by the framing.  If we take a braid closure with the blackboard framing, then this linking number is the writhe $\wri(\beta)$\notation{$\wri(\beta)$}{The writhe of the braid $\beta$.} of the braid, that is, the signed count of crossings.  
Thus, we must add $-\wri(\beta)$ twists to the closure to get to the zero-framing.

If $L$ is a knot, then by convention, we do this just above the braid on the leftmost strand, at the spot marked by a star ($*$) in the picture below. 
The star serves two purposes: it marks where the framing correction twists are inserted, and it is also the reference point where the label-shift operator $\ourlambda$, which we will introduce below, acts.
\begin{equation}
	\begin{tikzpicture}[baseline]
	\draw[very thick,lower>] (-1.2,0) -- (-1.2,1) to[out=90,in=180]  node[pos=.2,left]          {$*$}  (2.5,3)to[out=0,in=90]  (6.2,1)-- (6.2,-1) to[out=-90,in=0] (2.5,-3) to[out=180, in=-90] (-1.2,-1)--cycle;
	\draw[very thick,lower>] (1.2,0) -- (1.2,1) to[out=90,in=180] (2.5,1.8) to[out=0,in=90]  (3.8,1)-- (3.8,-1) to[out=-90,in=0] (2.5,-1.8) to[out=180, in=-90] (1.2,-1)--cycle;	
	\draw[very thick,lower>] (-.4,0) -- (-.4,1) to[out=90,in=180] (2.5,2.6)to[out=0,in=90]  (5.4,1)-- (5.4,-1) to[out=-90,in=0] (2.5,-2.6) to[out=180, in=-90] (-.4,-1)--cycle;
	\draw[very thick,lower>] (.4,0) -- (.4,1) to[out=90,in=180] (2.5,2.2)to[out=0,in=90]  (4.6,1)-- (4.6,-1) to[out=-90,in=0] (2.5,-2.2) to[out=180, in=-90] (.4,-1)--cycle;
	\node[inner xsep=40pt,inner ysep=30pt,draw,very thick,fill=white] (box) at (0,0){$\beta$};
	\end{tikzpicture}
\end{equation}

For a link, we must choose such a base point for each component---the usual convention is to choose the leftmost strand of each component at the point just above the braid.

Having constructed a link $L$ as the closure of a braid with heavy strands as above, we consider the formal span of all diagrams obtained by labeling the upward-oriented portions of the heavy link with elements of $M+\Z$, and by adding all possible configurations of light strands. We choose the convention that the elements of $M$ are in bijection with the $r$ components of $L$, and that each upward-oriented heavy segment is labeled by the corresponding element of $M$.

More formally, we slice the braid closure in the middle and obtain a right module ${}_{T_1} C'$ and a left module $C'_{T_2}$ over $\Spidq{\mathbf{m},\boldsymbol{\epsilon}}{N}$, where $T_1$ is the top half (with only maxima) and $T_2$ is the bottom half (with only minima) of the link.

\begin{definition}
	We let $\ourabq L={}_{T_1} C'\otimes_{\Spidq{\mathbf{m},\boldsymbol{\epsilon}}{N}}C'_{T_2}$\notation{$\ourabq L$}{The link invariant $\ourabq L={}_{T_1} C'\otimes_{\Spidq{\mathbf{m},\boldsymbol{\epsilon}}{N}}C'_{T_2}$.}.  
\end{definition}
\begin{theorem}
  The invariant $\ourabq L$ only depends on the isotopy class of $L$.  That is, we obtain the same value for $\ourabq L$ regardless of the choice of splitting of a diagram of $L$ into tangles $T_1,T_2$.
\end{theorem}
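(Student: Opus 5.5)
Our plan is to reduce isotopy invariance to a finite list of local moves on Morse-position diagrams of $L$, each of which changes the splitting $(T_1,T_2)$ in a controlled way. Any two diagrams of $L$ in Morse position with all maxima above all minima are related by (a) isotopies of the top half among tangles with only maxima, or of the bottom half among tangles with only minima; (b) moving the slicing level through a crossing, i.e.\ transferring a braid generator $\sigma$ from the bottom of $T_1$ to the top of $T_2$; (c) planar isotopies changing the relative heights of extrema within one half; and (d) cancelling or creating a maximum–minimum pair, together with moving an extremum past the slicing line. Indeed, a generic isotopy of link diagrams in Morse position can be perturbed so that no minimum ever lies above a maximum except through such local events. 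Case (a) is exactly \cref{lem:only-tangle}, which gives canonical isomorphisms of the modules ${}_{T_1}C'$ and $C'_{T_2}$ (after specializing to $\Spidq{\mathbf{m},\boldsymbol{\epsilon}}{N}$ via the lattice). Case (c) is absorbed into (a), since a tangle with only maxima is still such a tangle after reordering the heights.

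For (b), we use the braid equivalence $\mathsf{T}_\sigma\colon \Spidm{\mathbf{m},\boldsymbol{\epsilon}}{N}\to\Spidm{\sigma\mathbf{m},\sigma\boldsymbol{\epsilon}}{N}$. By construction, ${}_{\sigma T_1}C'$ is the pullback of ${}_{T_1}C'$ along $\mathsf{T}_\sigma^{-1}$, and $C'_{\sigma^{-1}T_2}$ is likewise a twist of $C'_{T_2}$ (with framing twists producing powers of $-g\ourmu_i^{-2}$ on both sides, which cancel). For any equivalence $F$ one has a canonical isomorphism $F^*X\otimes_{\mathcal{D}}F^*Y\cong X\otimes_{\mathcal{C}}Y$. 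This gives the required isomorphism, and it is evidently compatible with the generators given by heavy diagrams without light strands. We must check that $\mathsf{T}_\sigma$ preserves the $\Bbbk'$-lattice $\Spidq{}{}$. We do this by noting that the conjugated images of the generators $\aij$ are again generated by blue-rung diagrams, via the blue skein relations \cref{eq:thick-blue-skein1,eq:thick-blue-skein2}.

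The main obstacle is (d): the moves that change the number of heavy uprights, namely a zigzag (a cap and cup cancelling) and sliding a maximum down through the slicing level. Here we cannot simply transport along an equivalence, because the categories $\Spidq{\mathbf{m},\boldsymbol{\epsilon}}{N}$ on either side have different numbers of uprights. We would first realize the cap of $T_1$ as a bimodule (a functor from the $(n+2)$-upright category to the $n$-upright category) built from $\widetilde{C}$ over $\Spl{\mathbf{m},\boldsymbol{\epsilon}}{N}$. We would then prove a zigzag identity: the cup module tensored with the cap bimodule over the two middle uprights is isomorphic to the identity bimodule on a single heavy upright. Concretely, we compute on the PBW basis of \cref{cor:spider-basis}. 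A basis of the tensor product is given by diagrams in which every rung meeting the cup has been pushed down using \cref{eq:cup-isotope} and evaluated by \cref{E-blue-cup,F-blue-cup}. The scalars $(q^{p+1}\ourmu-q^{-p-1}\ourmu^{-1})$ and $(gq^{-p+1}\ourmu^{-1}-g^{-1}q^{p-1}\ourmu)$ that appear must match those obtained by straightening the strand and using \cref{eq:bigon2}. Because these scalars vanish at special values, one must show that no extra torsion appears in the $\Bbbk'$-lattice; this is the step where we expect the real work. Once the zigzag relation holds, associativity of the tensor product yields invariance under (d). Combining (a)–(d) then proves the theorem.
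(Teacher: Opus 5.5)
Your route differs from the paper's. The paper works only with braid-closure splittings. Moving the cut and conjugating $\beta$ are handled essentially as in your (a)--(b), and Markov's theorem then leaves stabilization as the only new move. A stabilization creates a curl (a new cap, a new cup and a crossing). The paper slides light-strand endpoints off the curl with local relations, removes the curl by isotopy, and pushes the resulting framing twist back to the star.

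You instead allow arbitrary splittings and use stable equivalence of bridge (plat) presentations, so your new move is a crossingless zigzag straddling the cut. This buys a direct treatment of arbitrary splittings: a bridge splitting with fewer bridges than the braid index is not a braid-closure cut. The cost is that you need cap/cup bimodules between categories with different numbers of heavy uprights. You also need to say where the star and the zero-framing correction sit in a general bridge diagram, since Hilden-type cap twists in (a) change the blackboard framing.

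As written, though, the proposal leaves the decisive step unproved. First, the topological justification is wrong as stated. A generic isotopy cannot in general be perturbed so that all maxima stay above all minima; one must first insert perturbations. That this suffices is Birman's theorem on stable equivalence of plat presentations, which you should cite rather than derive this way.

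Second, and more importantly, step (d) is only a plan, and your proposed method creates an obstacle that is not there. You do not need to evaluate rungs against the cup via \cref{E-blue-cup,F-blue-cup} and then control torsion from the non-unit scalars $q^{p+1}\ourmu-q^{-p-1}\ourmu^{-1}$ in $\Bbbk'$. Instead, define the comparison maps diagrammatically:
\begin{itemize}
\item one map inserts a zigzag away from all light strands;
\item the other straightens the zigzag by planar isotopy, evaluating any resulting bigons by the integral relation \cref{eq:bigon2}.
\end{itemize}
Neither map divides by anything, and both are well defined because every relation is local and isotopy-invariant. Straightening after inserting is the identity. Inserting after straightening only moves light endpoints that were attached to the middle segment of the zigzag onto the outer segments. \Cref{eq:cup-isotope}, together with its rotated version for the cap, says exactly that such an endpoint may be slid around the extremum.

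So the two maps are inverse isomorphisms over $\Bbbk'$ and no torsion analysis is needed. The agreement of cup scalars with bigon scalars that you planned to check is automatic, since the cup action is itself derived from \cref{eq:cup-isotope} and \cref{eq:bigon2}. This is the same mechanism as the paper's curl removal; with it in place, your argument goes through.
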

\begin{proof}
 Since all of our relations are invariant under isotopy, isotopies that preserve the plane of the decomposition into two tangles do not change $\ourabq L$. Thus, we can reduce to the case where $T_1,T_2$ are obtained by cutting a braid closure of a braid $\beta$ representing $L$. A change of cut position corresponds to an isotopy of this decomposition, so it does not change $\ourabq L$. Furthermore, conjugation in the braid group can be performed compatibly with the cut, so $\ourabq L$ depends only on the conjugacy class, not on the chosen braid representative.
 
 Thus, by the Markov theorem, we only need to show invariance in the case of the addition or removal of a positive or negative stabilization.  We can use our local relations to move the end of any light strand away from the curl made by the stabilization;  once we have done this, the isomorphism is given by the isotopy which adds or removes the curl.  Note that this adds a twist to the heavy strand, which we must isotope back to the star.
\end{proof}

We can view this as a generalization of the computation of the HOMFLYPT polynomial as follows. For the usual HOMFLYPT polynomial with coloring $\bigwedge{}^{\!k_i}\C^N$ on each component, one specializes the formal parameters $m_i$ to the integers $k_i$ and evaluates in the spider category $\Spid{N}$. Our construction keeps $m_i$ formal, which captures the relations between these polynomials for different colorings.

More precisely, we let $\C_g$ be the 1-dimensional representation of $U_q(\mathfrak{gl}_n\oplus \mathfrak{gl}_n)$ considered as a subalgebra of $U_q(\mathfrak{gl}_{2n})$ where $E_i,F_i$ for $i\neq n$ act by $0$ and $K_i$ acts by $1$ if $i\leq n$ and by $g$ if $i>n$.  We then let $\pverma$ be the parabolic Verma module induced from $\C_g$ with $F_n$ acting by $0$ (so this is a lowest weight module), considered as a left $U_q(\mathfrak{gl}_{2n})$-module.  We let $\pverma^{\circ}$ be the right module obtained by twisting the action on $\pverma$ by the Cartan involution $E_i\mapsto F_i$, $F_i\mapsto E_i$, $K_i\mapsto K_i$.  

Given $\mathbf{k}=(k_1,\dots, k_n)\in \Z_{\geq 0}^n$, there is a vector in this module $v_{\mathbf{k}}$ which corresponds to the fully nested cups $T$ given by $E_{n,n+1}^{(k_n)} E_{n-1,n+2}^{(k_{n-1})}E_{n-2,n+3}^{(k_{n-2})}\cdots E_{1,2n}^{(k_1)}  v_{0}$.

We can compute the HOMFLYPT polynomial of a link $L$, written as the closure of a braid $\beta$ on $n$ strands, as follows: the Lusztig braid action of $\beta^{-1}$ induces a self-map $\pverma\to \pverma$ which intertwines the usual action of $U_q(\mathfrak{gl}_{2n})$ with the action twisted by this Lusztig braid action; let $\pverma^{\beta}$ be the parabolic Verma module with this twisted action.  

The tensor product $\pverma^{\circ}\otimes_{U_q(\mathfrak{gl}_{2n})} \pverma^{\beta}$ is 1-dimensional, spanned by $v_0\otimes v_0$; the tensor product $v_{\mathbf{k}}\otimes v_\mathbf{k}$ is precisely the product of the colored HOMFLYPT polynomial for $L$ and $v_0\otimes v_0$.

\subsection{Label-shifting}

The introduction of heavy objects introduces a new structure into this category:  the ability to shift the label on heavy strands.  
\begin{definition}
    For any $m\in M$, the label-shift automorphism $\shift_m$ of $\SpidM{N}$ is an autoequivalence which:
    \begin{enumerate}
        \item Acts on objects by sending $(m+p)^{\pm}$ to $(m+p-1)^{\pm}$, and all other elements of $\Pi^{\pm}$ to themselves.
        \item Acts on diagrams by shifting the label of each heavy upright as above.
        \item Acts on the base ring $\Bbbk$ by
        \begin{equation}\label{eq:shift-torus}
            \shift_m(q)=q,\qquad \shift_m(g)=g,\qquad \shift_m(\ourmu')=q^{-\delta_{m,m'}}\ourmu'.
        \end{equation}
    \end{enumerate}
\end{definition}
Thus, label-shift automorphisms and multiplication by the scalars $\ourmu_i$ acting on morphisms satisfy the relations of the quantum torus $\qt{r}$.  

We can define a smash product category, where we add in a label-shift isomorphism from $\mathbf{a}$ to $\shift_m\mathbf{a}$ for all $m\in M$ and objects $\mathbf{a}$, which commutes past the morphisms of the category $\SpidM{N}$ by the relations \cref{eq:shift-torus}.

More precisely, let $\tSpidM{N}$ be the category with the same underlying objects as $\SpidM{N}$ whose morphism spaces are
\[\Hom_{\tSpidM{N}}(\Br,\Bs) \cong \bigoplus_{(a_1,\dots, a_r)\in \Z^r}\Hom_{\SpidM{N}}(\Br,\prod\shift_i^{-a_i}\cdot \Bs)\cdot\prod\shift_i^{a_i}, \] with composition defined using \cref{eq:shift-torus} to simplify expressions of the form $f_1\cdot\prod\shift_i^{a_i}\cdot f_2 \cdot \prod\shift_i^{b_i}$.

We can extend all the modules appearing in the sections above to this category.  In particular, $\ourabq L$ for a link $L$ inherits a module structure over $\qt{r}$.  
If we consider the base change $\ourab{L}=\ourabq L\otimes_{\C[q,q^{-1}]}\C$\notation{$\ourab{L}$}{The base change of $\ourabq L$ where we set $q=1.$} where we set $q=1$, this becomes a module structure over the Laurent polynomial ring $\Laur{r}=\C[\ourmu^{\pm 1}_i,\ourlambda^{\pm 1}_i,g^{\pm 1}]$\notation{$\Laur{r}$}{The Laurent polynomial ring $\C[\ourmu^{\pm 1}_i,\ourlambda^{\pm 1}_i,g^{\pm 1}]$.}.
Over this algebra, we can interpret \crefrange{E-blue-cup}{F-blue-cup} as a presentation of the module $C_{m}$ over the smash product in terms of the images $\tilde{E},\tilde{F}$ of the elements $(q-q^{-1})E,(q-q^{-1})F$ of $U_q(\mathfrak{gl}_2)$, which act on the vector $v_0$ corresponding to the empty diagram as:  
\[(\tilde{E}-(q\ourmu-q^{-1}\ourmu^{-1})\Lambda^{-1})v_0=0, \qquad (\tilde{F}-(gq\ourmu^{-1}-g^{-1}q^{-1}\ourmu)\Lambda)v_0=0.\]

Recall that in the introduction, we defined an ideal 
\begin{equation}
    \Iq{L}=\{x\in \qt{r}\mid x\cdot [L]=0\}.\notation{$\Iq{L}$}{The ideal in $\qt{r}$ annihilating $[L]$.}
\end{equation}
This is the ``ideal of relations between HOMFLYPT polynomials which can be derived from the MOY relations.''  

In the introduction, we also introduced the space $\funcs{r}$ of all maps $\Z^r\to \C(q)[g^{\pm 1}]$, with its usual $\qt{r}$-module structure.
There is a $\qt{r}$-module homomorphism $\ev\colon \ourabq{L}\to \funcs{r}$ given by $\ev(D)(k_1,\dots, k_r)$ being the evaluation of $D$ in the HOMFLYPT skein module, that is, as an endomorphism of the tensor identity in  $\Spid{N}$, after the substitution $m_i\mapsto k_i$.  Of course, under this specialization, we identify $\ourmu_i\mapsto q^{k_i}$.  
\begin{conjecture}\label{conj:ev-injective}
The map $\ev$ is injective and $\Iq{L}$ is equal to the annihilator $\Iqp{L}$ of $\ev([L])$.
\end{conjecture}
In \cite{garoufalidisColoredHOMFLYPT2018}, Garoufalidis, Lauda, and L\^e show that $\ev([L])$ is a holonomic element of $\funcs{r}$, that is, that the quotient $\qt{r}/\Iqp{L}$ has Gelfand--Kirillov dimension $r$ as a $\qt{r}$-module.

\section{Comparison with knot contact homology}
In \cref{sec:background,sec:comparison}, we consider a fixed knot $K$ and compare the invariant $\ourab K$ with knot contact homology. Then, in \cref{sec:links}, we turn to the difficulties raised by more general links, and in \cref{sec:augmentation} we discuss connections to augmentation varieties.  

\subsection{Background on knot contact homology}\label{sec:background}

Knot contact homology is a powerful knot invariant arising from symplectic geometry. Roughly speaking, it counts holomorphic disks in the cotangent bundle $T^*\R^3$ with boundary on the unit conormal bundle of the knot. For computational purposes, we use the combinatorial formulation of Ng \cite{ngTopologicalIntroduction2014}, which associates to a braid presentation of a knot a differential graded algebra (dga) whose homology is the knot contact homology.

The variable $U$ used there is often denoted $Q$ in more recent literature, but we will use $U$ to more closely match the notation of \cite{ngTopologicalIntroduction2014}.  This defines a dga $\mathcal{A}(K)$ over $\C[\ngmu^{\pm 1},\nglambda^{\pm 1},U^{\pm 1}]$. The invariant of interest to us is the largest commutative quotient algebra of the degree 0 part of $\mathcal{A}(K)$.  To avoid cumbersome notation later, we build in the change of variables that we will need:
\begin{definition}
    Let $\ngab K$\notation{$\ngab K$}{The largest commutative quotient algebra of the degree 0 part of knot contact homology dga $\mathcal{A}(K)$.} be the largest commutative quotient algebra of the degree 0 part of $\mathcal{A}(K)$ with scalars extended from 
  	$\C[\ngmu^{\pm 1},\nglambda^{\pm 1},U^{\pm 1}]$ to $\Laur{1}=\C[\ourmu^{\pm 1},\ourlambda^{\pm 1},g^{\pm 1}]$ via the injective map
    	\begin{equation}\label{eq:substitution}
		\ngmu\mapsto\ourmu^{-2}, \qquad U\mapsto g^{-2},\qquad \nglambda\mapsto-g^{-1}\ourlambda^{-1}.
  \end{equation}  \notation{$\ngmu,U,\nglambda$}{The usual variables for knot contact homology, related to ours by $\ngmu=\ourmu^{-2}, U= g^{-2}, \nglambda=-g^{-1}\ourlambda^{-1}$.} This is the coordinate ring of the {\bf full augmentation variety} of the knot $K$ in the sense of Diogo and Ekholm \cite{diogoAugmentationsAnnuli2025};  it could thus also be called the full augmentation algebra.  The full augmentation variety maps to the usual augmentation variety by forgetting the variables $a_{ij}$ introduced below, which correspond to augmentations of Reeb chords between different strands of the braid.  Thus, the augmentation algebra is the subalgebra of $\ngab K$ generated by $\ourmu^{\pm 1},\ourlambda^{\pm 1},g^{\pm 1}$.
\end{definition}

The algebra $\mathcal{A}(K)$ is concentrated in degrees $\geq 0$, with the convention that the degree of the differential is $-1$ (in contrast to much of the literature on dg-algebras).  Thus, $\ngab K$ is generated by Ng's degree 0 generators $a_{ij}, \ngmu^{\pm 1},\nglambda^{\pm 1},U^{\pm 1}$, subject to the relations that the resulting algebra is commutative and that the differential of any degree 1 element is 0.  Let $\freeA$\notation{$\freeA$}{The free commutative algebra over $\C[\ngmu^{\pm 1},\nglambda^{\pm 1},U^{\pm 1}]$ generated by $a_{ij}$ with $i\neq j$.  } be the free commutative algebra over $\C[\ngmu^{\pm 1},\nglambda^{\pm 1},U^{\pm 1}]$ generated by $a_{ij}$ with $i\neq j$.  

The algebra $\freeA$ inherits a braid group action from \cite[Def. 3.3]{ngTopologicalIntroduction2014}, where the positive braid generator $\sigma_k$ acts by:
\begin{equation}
\phi_{\sigma_k}\notation{$\phi_{\beta}$}{The action of a braid $\beta$ on the algebra $\freeA$.}: \begin{cases}a_{i j} \mapsto a_{i j}, & i, j \neq k, k+1 \\ a_{k+1, i} \mapsto a_{k i}, & i \neq k, k+1 \\ a_{i, k+1} \mapsto a_{i k}, & i \neq k, k+1 \\ a_{k, k+1} \mapsto-a_{k+1, k} & \\ a_{k+1, k} \mapsto-a_{k, k+1} & \\ a_{k i} \mapsto a_{k+1, i}-a_{k+1, k} a_{k i}, & i \neq k, k+1 \\ a_{i k} \mapsto a_{i, k+1}-a_{i k} a_{k, k+1}, & i \neq k, k+1 .\end{cases}
\end{equation}

We now introduce matrices that organize the generators $a_{ij}$ in a way that makes the connection to our skein-theoretic constructions transparent. Define $n\times n$ matrices $\mathbf{A},\mathbf{\hat{A}}$ in $\freeA$ by:
\[\mathbf{A}_{ij}=\begin{cases} a_{ij}, & i< j\\ -\ngmu a_{ij}, & i>j\\
1-\ngmu, & i=j,
\end{cases}\qquad \qquad\mathbf{\hat{A}}_{ij}=\begin{cases} Ua_{ij}, & i< j\\ -\ngmu a_{ij}, & i>j\\
U-\ngmu, & i=j.
\end{cases}\]
\notation{$\mathbf{A},\mathbf{\hat{A}},\boldsymbol{\Lambda}$}{Matrices in $\freeA$ that appear in the relations of $\ngab{K}$.}These matrices encode the augmentation data: the diagonal entries record the meridian variable $\ngmu$, while off-diagonal entries $a_{ij}$ correspond to ``paths'' between strands $i$ and $j$ that contribute to the augmentation variety. 

The braid group action above can be captured by maps
\[\beta \mapsto \boldsymbol{\Phi}^L_{\beta},\qquad \beta \mapsto (\boldsymbol{\Phi}^R_{\beta})^{-1}.\]
These matrices are defined as in \cite[\S 2.3]{ngFramedKnot2008} (note that there is a typo in the corresponding formula in \cite{ngTopologicalIntroduction2014}):\notation{$\boldsymbol{\Phi}^L_{\beta},\boldsymbol{\Phi}^R_{\beta}$}{Matrices associated to the braid $\beta$ which describe the action on the matrix $\mathbf{A}$.}
\begin{equation}
\phi_\beta\left(a_{i *}\right)=\sum_{j=1}^n\left(\boldsymbol{\Phi}_\beta^L\right)_{i j} a_{j *} \quad \text { and } \quad \phi_\beta\left(a_{* j}\right)=\sum_{i=1}^n a_{* i}\left(\boldsymbol{\Phi}_\beta^R\right)_{i j},
\end{equation}
where $*$ denotes any index larger than all strands involved in the braid $\beta$; if necessary, we add an extra strand to the right of all existing strands to ensure such an index exists.
This is uniquely determined by starting with the matrices corresponding to the generators of the braid group:
\[
\boldsymbol{\Phi}^L_{\sigma_i}=\left[\begin{array}{c|cc|c}
    I_{i-1} & 0 & 0&0\\\hline 
    0 & -a_{i+1,i} & 1& 0\\
    0 & 1 & 0 & 0\\\hline 
    0 & 0 & 0 & I_{n-i-1}
\end{array}\right],\qquad \boldsymbol{\Phi}^R_{\sigma_i}=\left[\begin{array}{c|cc|c}
    I_{i-1} & 0 & 0&0\\\hline 
    0 & -a_{i,i+1} & 1& 0\\
    0 & 1 & 0 & 0\\\hline 
    0 & 0 & 0 & I_{n-i-1}
\end{array}\right],
\]
and combining them according to the rule that \[\boldsymbol{\Phi}_{\beta_1\beta_2}^L= \phi_{\beta_1}(\boldsymbol{\Phi}_{\beta_2}^L)\boldsymbol{\Phi}_{\beta_1}^L \quad \text { and } \quad \boldsymbol{\Phi}_{\beta_1\beta_2}^R=\boldsymbol{\Phi}_{\beta_1}^R \phi_{\beta_1}(\boldsymbol{\Phi}_{\beta_2}^R).  \]
These matrices satisfy the property that $\phi_{\beta}(\mathbf{A})=\boldsymbol{\Phi}^L_{\beta}\mathbf{A}\boldsymbol{\Phi}^R_{\beta}$.

\begin{lemma}\label{lem:Phi-braid}
	Under the map $\freeA\to \Spidp{\mathbf{m},\boldsymbol{\epsilon}}{N}$ sending $a_{ij}\mapsto \aij$, this braid group action is intertwined with the braid group action of \cref{sec:braid}.
\end{lemma}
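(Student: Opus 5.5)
Both actions respect composition of braids, so it suffices to check the statement on the generators $\sigma_k$ (and their inverses, which then follow automatically). The map $a_{ij}\mapsto \aij$ sends $\freeA$ to the commutative algebra of endomorphisms of the object $(m_1,\dots,m_n)$ in $\Spidp{\mathbf{m},\boldsymbol{\epsilon}}{N}$. Commutativity holds because at $q=1$ the lattice $\Spidq{\mathbf{m},\boldsymbol{\epsilon}}{N}$ specializes to functions on $GL_n^*$, and the blue-rung diagrams $\aij$ are weight-zero elements. We therefore need to show that $\mathsf{T}_{\sigma_k}(\aij)$ equals the image of $\phi_{\sigma_k}(a_{ij})$ for each case in Ng's list. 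Here $\mathsf{T}_{\sigma_k}$ is computed geometrically, as in \cref{sec:braid}: conjugate the diagram $\aij$ by the crossing of heavy strands $k$ and $k+1$, then straighten the uprights using the blue skein relations \crefrange{eq:thick-blue-skein1}{eq:thick-blue-skein2} at $q=1$.

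The cases go in the following order.
\begin{enumerate}
\item If $i,j\notin\{k,k+1\}$, the conjugation changes nothing. If the rung of $\aij$ passes over the crossing region, it can be isotoped past it, because a light strand crossing two heavy uprights can slide through their crossing.
\item For $a_{k+1,i}\mapsto a_{ki}$ and $a_{i,k+1}\mapsto a_{ik}$, the rung attached to upright $k+1$ is carried by the braid to upright $k$ without crossing the other strand. So it is isotopic to the corresponding $\gamma_{ki}$ or $\gamma_{ik}$, provided we choose the side (over or under) consistently with the paths fixed in \eqref{eq:aij-def}.
\item For $a_{k,k+1}\mapsto -a_{k+1,k}$, the rung between strands $k$ and $k+1$ is carried to one whose direction is reversed relative to the standard path. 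The sign in the definition of $\aij$ for $j<i$, together with the framing curl, should produce exactly the factor $-1$.
\item For $a_{ki}\mapsto a_{k+1,i}-a_{k+1,k}a_{ki}$, the rung from upright $k$ now has to cross upright $k+1$ on the wrong side. Applying the blue skein relation once changes the crossing, at the cost of a term where the rung is split into two blue rungs meeting on upright $k+1$. The first term is $\gamma_{k+1,i}$. The correction term factors, after isotopy, as a product of the rung between $k+1$ and $k$ and the rung from $k$ to $i$. At $q=1$ the ordering of these factors is irrelevant, so we obtain $-\gamma_{k+1,k}\gamma_{ki}$, with the sign coming from \cref{eq:thick-blue-skein2}.
\item The case $a_{ik}$ is the mirror image and uses \cref{eq:thick-blue-skein1}.
\end{enumerate}

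The main obstacle will be the bookkeeping of signs and of over/under choices. The elements $\aij$ depend on a choice of path and framing, and the $-1$ in \eqref{eq:aij-def} for $j<i$ has to be matched precisely against both the sign difference between the two blue skein relations and Ng's sign conventions. The paper already flags that its braid action agrees with \cite[(7.5)]{queffelecMixedQuantum2019} only up to sign.

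To carry this out, I would first fix the convention that for $i<j$ the rung of $\aij$ passes \emph{under} the intermediate uprights; this convention should be checked against the choice in the pictures. I would then verify the generator $\sigma_k$ on the smallest case $n=3$, with $i$ placed on each side of the pair $k,k+1$. After that, I would note that at $q=1$ any residual framing factors $-g\ourmu_i^{-2}$ cancel, since conjugation adds and removes the same twist. This reduces everything to the local computations above. Finally, it is worth checking that the matrix identities $\boldsymbol{\Phi}^L_{\sigma_k},\boldsymbol{\Phi}^R_{\sigma_k}$ come out as a consequence, as a consistency check.
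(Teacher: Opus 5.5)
Your plan matches the paper's proof essentially step for step. You reduce to $\sigma_k$ and handle $a_{ij}$ with $i,j\notin\{k,k+1\}$ and $a_{k+1,i},a_{i,k+1}$ by isotopy; the sign for $a_{k,k+1}$ comes from the framing curl together with the $-1$ in \eqref{eq:aij-def}; and the quadratic correction for $a_{ki},a_{ik}$ comes from one application of \eqref{eq:thick-blue-skein1} or \eqref{eq:thick-blue-skein2}. One small correction: the $\aij$ are not weight-zero endomorphisms, since they shift labels by $\alpha_{ij}$, but the commutativity you use to reorder the two rungs still holds because the $q=1$ specialization of the rescaled integral form is commutative.
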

Before the proof of this result, let us record a set of relations that show how we can change framing, which have already appeared in \cite[(E.21-23)]{gaiottoDbranesPlanar2026}: 
\begin{align}\label{eq:framingchange1}
\begin{tikzpicture}[baseline=-3]
\node (A) at (-1,-0.5) {};
\node (B) at (0,0) {};
\coordinate (B1) at (-0.3,0.3) {};
\coordinate (B2) at (0.3,0.3) {};
\coordinate (B3) at (-0.3,-0.3) {};
\coordinate (B4) at (0.3,-0.3) {};
\node (C) at (1,-0.5) {};
\draw[rung] (A) to[out=0,in=225,looseness=1]  (B3);
\draw[rung] (B3) --  (B);
\draw[rung] (B) --  (B2);
\draw[rung] (B1) -- (B4);
\draw[rung] (B4) to[out=-45,in=180,looseness=1]  (C);
\draw[rung] (B2) to[out=45,in=135,looseness=3] (B1);
\end{tikzpicture}\quad =& \quad -g\begin{tikzpicture}[baseline=-3]
\node (A) at (1,0) {};
\node (B) at (-1,0) {};
\draw[rung] (B) --  (A);
\end{tikzpicture} \;,
\end{align}
\begin{align}\label{eq:framingchange2}
\begin{tikzpicture}[baseline=-3]
\node (C) at (0,1) {};
\node (D) at (0,-1) {};
\coordinate (i) at (0,-0.2) {};
\node (j) at (0,0.2) {};
\coordinate (j2) at (0,0.2) {};
\node (A) at (1,0.2) {};
\draw[rung>] (j2) to[out=180,in=180,looseness=4] (i);
\draw[rung>] (A) -- (j2);
\draw[mid>,very thick] (D) -- (i);
\draw[very thick] (i) -- (j);
\draw[mid>,very thick] (j) -- (C);
\end{tikzpicture}
\quad = \quad - g^{-1}\ourmu \quad
\begin{tikzpicture}[baseline=-3]
\node (C) at (0,1) {};
\node (D) at (0,-1) {};
\coordinate (A) at (0,0) {};
\node (B) at (1,0) {};
\draw[rung<] (A) -- (B);
\draw[mid>,very thick] (D) -- (A);
\draw[mid>,very thick] (A) -- (C);
\end{tikzpicture} \;,
\quad \quad \quad \quad \quad \quad \quad
\begin{tikzpicture}[baseline=-3]
\node (C) at (0,1) {};
\node (D) at (0,-1) {};
\coordinate (i) at (0,-0.2) {};
\coordinate (i2) at (0,0) {};
\node (j) at (0,0.2) {};
\coordinate (j2) at (0,0.2) {};
\node (A) at (1,0.2) {};
\draw[rung>] (j) to[out=180,in=180,looseness=4] (i);
\draw[rung>] (A) -- (j);
\draw[mid>,very thick] (D) -- (i);
\draw[very thick] (i) -- (j2);
\draw[mid>,very thick] (j2) -- (C);
\end{tikzpicture}
\quad = \quad - g \ourmu^{-1} \quad
\begin{tikzpicture}[baseline=-3]
\node (C) at (0,1) {};
\node (D) at (0,-1) {};
\coordinate (A) at (0,0) {};
\node (B) at (1,0) {};
\draw[rung<] (A) -- (B);
\draw[mid>,very thick] (D) -- (A);
\draw[mid>,very thick] (A) -- (C);
\end{tikzpicture} \;.
\end{align}

\begin{proof}
	This is a straightforward computation.  The case $i, j \neq k, k+1$ is simple isotopy ignoring the crossing strands.  The case of $a_{k+1, i}$ and $a_{i, k+1}$ for $i \neq k, k+1$ is an isotopy along the $(k+1)$th strand as shown below:
	\begin{equation}
	\begin{tikzpicture}[baseline=23,scale=.6]
    \node (C) at (1,3) {$m_k$};
    \node (D) at (-1,0) {$m_k$};
    \node (A) at (-1,3)  {$m_{k+1}$};
    \node (B) at (1,0){$m_{k+1}$};
    \node (i) at (intersection of A--B and D--C) {};
    \draw[rung>] (1.5,1) -- (.33,1); 
    \draw[very thick,->] (D) -- (C);
    \draw[very thick] (B) -- (i);
    \draw[very thick,->] (i) -- (A);
  \end{tikzpicture}
  = 
		  \begin{tikzpicture}[baseline=23,scale=.6]
    \node (C) at (1,3) {$m_k$};
    \node (D) at (-1,0) {$m_k$};
    \node (A) at (-1,3)  {$m_{k+1}$};
    \node (B) at (1,0){$m_{k+1}$};
    \node (i) at (intersection of A--B and D--C) {};

    \draw[rung>] (1.5,2) -- (-.33,2); 
    \draw[line width=4pt,white] (D)--(C);
    \draw[very thick,->] (D) -- (C);
    \draw[very thick] (B) -- (i);
    \draw[very thick,->] (i) -- (A);
  \end{tikzpicture} \;,  \hspace{1in} 		  \begin{tikzpicture}[baseline=23,scale=.6]
    \node (C) at (1,3) {$m_k$};
    \node (D) at (-1,0) {$m_k$};
    \node (A) at (-1,3)  {$m_{k+1}$};
    \node (B) at (1,0){$m_{k+1}$};
    \node (i) at (intersection of A--B and D--C) {};
    \draw[rung<] (-1.5,1) -- (.33,1); 
    \draw[line width=4pt,white] (D)--(C);
    \draw[very thick,->] (D) -- (C);
    \draw[very thick] (B) -- (i);
    \draw[very thick,->] (i) -- (A);
  \end{tikzpicture}
  = 
		  \begin{tikzpicture}[baseline=23,scale=.6]
    \node (C) at (1,3) {$m_k$};
    \node (D) at (-1,0) {$m_k$};
    \node (A) at (-1,3)  {$m_{k+1}$};
    \node (B) at (1,0){$m_{k+1}$};
    \node (i) at (intersection of A--B and D--C) {};

    \draw[rung<] (-1.5,2) -- (-.33,2); 
    \draw[line width=4pt,white] (D)--(C);
    \draw[very thick,->] (D) -- (C);
    \draw[very thick] (B) -- (i);
    \draw[very thick,->] (i) -- (A);
  \end{tikzpicture} \;.   
	\end{equation}
	The case $a_{k, k+1}$ follows from 
the isotopy
	\begin{equation}
\begin{tikzpicture}[baseline=23,scale=.6]
    \node (C) at (1,3) {$m_k$};
    \node (D) at (-1,0) {$m_k$};
    \node (A) at (-1,3)  {$m_{k+1}$};
    \node (B) at (1,0){$m_{k+1}$};
    \node (i) at (intersection of A--B and D--C) {};
    \draw[rung<] ({1-.6/1.5},.6) -- ({-1+.6/1.5},.6); 
    \draw[very thick,->] (D) -- (C);
    \draw[very thick] (B) -- (i);
    \draw[very thick,->] (i) -- (A);
  \end{tikzpicture}
=\begin{tikzpicture}[baseline=23,scale=.6]
    \node (C) at (1,3) {$m_k$};
    \node (D) at (-1,0) {$m_k$};
    \node (A) at (-1,3)  {$m_{k+1}$};
    \node (B) at (1,0){$m_{k+1}$};
    \node (i) at (intersection of A--B and D--C) {};
    \draw[very thick,->] (i) -- (A);
\draw[line width=2pt,white]    (.85,2.2) to[out=-160,in=20] (-.6,1.9) ;
    \draw[rung>] ({1-.5/1.5},{3-.7}) to[out=0,in=20] (.85,2.2) to[out=-160,in=20] (-.6,1.9)  to[out=-160,in=180] ({-1+1.2/1.5},{3-1.2}); 
    
    \draw[line width=3pt,white] (D)--(C);
    \draw[very thick,->] (D) -- (C);
    \draw[very thick] (B) -- (i);
  \end{tikzpicture} \;.
	\end{equation}
	The case of $a_{k, i}$ and $a_{i, k}$ for $i \neq k, k+1$ follows from the skein relation \crefrange{eq:thick-blue-skein1}{eq:thick-blue-skein2}.  For example, if $i<k$, then for $a_{i,k}$ we calculate:
		\begin{equation}
\begin{tikzpicture}[baseline=23,scale=.6]
    \node (C) at (1,3) {$m_k$};
    \node (D) at (-1,0) {$m_k$};
    \node (A) at (-1,3)  {$m_{k+1}$};
    \node (B) at (1,0){$m_{k+1}$};
    \node (i) at (intersection of A--B and D--C) {};
    \draw[rung>] (-1.5,1) -- (-.33,1); 
    \draw[line width=4pt,white] (D)--(C);
    \draw[very thick,->] (D) -- (C);
    \draw[very thick] (B) -- (i);
    \draw[very thick,->] (i) -- (A);
  \end{tikzpicture}
  = 
		  \begin{tikzpicture}[baseline=23,scale=.6]
    \node (C) at (1,3) {$m_k$};
    \node (D) at (-1,0) {$m_k$};
    \node (A) at (-1,3)  {$m_{k+1}$};
    \node (B) at (1,0){$m_{k+1}$};
    \node (i) at (intersection of A--B and D--C) {};

    \draw[very thick,->] (i) -- (A);
    \draw[line width=4pt,white] (-1.5,2) -- (.33,2);
    \draw[rung>] (-1.5,2) -- (.33,2); 
    \draw[very thick,->] (D) -- (C);
    \draw[very thick] (B) -- (i);
\end{tikzpicture}   = 
		  \begin{tikzpicture}[baseline=23,scale=.6]
    \node (C) at (1,3) {$m_k$};
    \node (D) at (-1,0) {$m_k$};
    \node (A) at (-1,3)  {$m_{k+1}$};
    \node (B) at (1,0){$m_{k+1}$};
    \node (i) at (intersection of A--B and D--C) {};
    \draw[rung>] (-1.5,2) -- (.33,2);
       \draw[line width=4pt,white] (i) -- (A); 
    \draw[very thick,->] (D) -- (C);
    \draw[very thick] (B) -- (i);
    \draw[very thick,->] (i) -- (A);
  \end{tikzpicture}   -
  		  \begin{tikzpicture}[baseline=23,scale=.6]
    \node (C) at (1,3) {$m_k$};
    \node (D) at (-1,0) {$m_k$};
    \node (A) at (-1,3)  {$m_{k+1}$};
    \node (B) at (1,0){$m_{k+1}$};
    \node (i) at (intersection of A--B and D--C) {};
    \draw[rung>] (-1.5,2) -- (-.33,2);
    \draw[rung<] ({1-.6/1.5},{3-.6}) -- ({-1+.6/1.5},{3-.6});
    \draw[very thick,->] (D) -- (C);
    \draw[very thick] (B) -- (i);
    \draw[very thick,->] (i) -- (A);
  \end{tikzpicture} \;.  
\end{equation}
	The other cases are similar.
\end{proof}
\begin{remark}\label{rem:two-halves}
    It is useful to think about this computation as follows: we can interpret $\gamma_{i*}$ as a light strand going from the $i$-th strand to the far right under all other strands, and $\gamma_{*i}$ as this diagram with the light strand going in the opposite direction.  We can then view $\aij$ (up to some changes of framing) as connecting these two light strands at the far right of the diagram.  Calculations as in the proof of \cref{lem:Phi-braid} show that the rules for sliding $\gamma_{i*}$ and $\gamma_{*i}$ over the braid diagram are described by left multiplication by $\boldsymbol{\Phi}^L_{\beta}$ and right multiplication by $\boldsymbol{\Phi}^R_{\beta}$ respectively.  This is just the special case of \cref{lem:Phi-braid} where one end of the light strand is to the right of any strands moved by the braid.   In \cite{ngFramedKnot2008,ngTopologicalIntroduction2014}, this is dealt with by adding another heavy upright which only serves a bookkeeping purpose, to give the light strand a fixed place to connect at the far right of the diagram. Thus, in our framework, we can omit this extra strand and just anchor the light strand in space.

    Thus, we can do the computation in \cref{lem:Phi-braid} on the two halves of the braid separately, with left multiplication by $\boldsymbol{\Phi}^L_{\beta}$ describing how the first half of the light strand interacts with the braid, and right multiplication by $\boldsymbol{\Phi}^R_{\beta}$ describing how the second half does.  Together these show that we recover $\phi_{\beta}(\mathbf{A})=\boldsymbol{\Phi}^L_{\beta}\mathbf{A}\boldsymbol{\Phi}^R_{\beta}$.  
\end{remark}

Consider the diagonal matrix $\boldsymbol{\Lambda}=\diag(\nglambda\ngmu^{\wri(\beta)}U^{(n-\wri(\beta)-1)/2},1,\dots, 1)$.
\begin{lemma}[\mbox{\cite[Th. 1.3]{ekholmFiltrationsKnot2013}}]
With these definitions, $\ngab K$ is the quotient of $\freeA$ by the relations:
\begin{align}\label{KCH1}
  \mathbf{A}&=\boldsymbol{\Lambda}\cdot \phi_{\beta}(\mathbf{A}) \cdot \boldsymbol{\Lambda}^{-1},\\  \label{KCH2} \mathbf{\hat{A}}&=\boldsymbol{\Lambda}\cdot \boldsymbol{\Phi}^L_{\beta}\cdot \mathbf{A},\\ \label{KCH3}
  \mathbf{A}&=\mathbf{\hat{A}}\cdot \boldsymbol{\Phi}^R_{\beta}\cdot\boldsymbol{\Lambda}^{-1}.  
\end{align}	
\end{lemma}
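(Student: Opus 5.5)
The lemma is cited from \cite[Th. 1.3]{ekholmFiltrationsKnot2013}, so the plan is to derive it from Ng's combinatorial presentation \cite{ngTopologicalIntroduction2014} of the degree $\le 1$ part of $\mathcal{A}(K)$, rather than to reprove any geometry.

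\emph{Step 1: recall Ng's presentation.} The degree 0 generators are the $a_{ij}$ together with the scalars $\ngmu^{\pm1},\nglambda^{\pm1},U^{\pm1}$. The degree 1 generators are $b_{ij}$ (for $i\neq j$), $c_{ij}$ and $d_{ij}$ (for all $i,j$). Their differentials are given in matrix form by
\[
\partial \mathbf{B}=-\boldsymbol{\Lambda}^{-1}\mathbf{A}\boldsymbol{\Lambda}+\phi_\beta(\mathbf{A}),\qquad
\partial \mathbf{C}=\mathbf{\hat A}-\boldsymbol{\Lambda}\boldsymbol{\Phi}^L_\beta\mathbf{A},\qquad
\partial \mathbf{D}=\mathbf{A}-\mathbf{\hat A}\boldsymbol{\Phi}^R_\beta\boldsymbol{\Lambda}^{-1}.
\]
Up to the sign and framing normalizations built into $\boldsymbol{\Lambda}$, these are the formulas of \cite[Def. 3.9]{ngTopologicalIntroduction2014}. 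The entries of $\mathbf{B}$ are off-diagonal, so one should check that the diagonal entries of \cref{KCH1} hold automatically.

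\emph{Step 2: identify the degree 0 homology.} Since $\mathcal{A}(K)$ is concentrated in degrees $\ge 0$, the degree 0 homology is the degree 0 part $\mathcal{A}_0$ modulo the two-sided ideal generated by the differentials of degree 1 generators. Degree 1 elements are spanned by words with exactly one degree 1 letter, and $\partial(xby)=x(\partial b)y$ for such words because $x,y$ have degree 0 and are therefore cycles. Consequently the boundaries form exactly the two-sided ideal generated by the entries of $\partial\mathbf{B},\partial\mathbf{C},\partial\mathbf{D}$.

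\emph{Step 3: abelianize.} Passing to the largest commutative quotient replaces the noncommutative free algebra by $\freeA$. The relations become the vanishing of the entries of the three matrices above, which are exactly \cref{KCH1,KCH2,KCH3}. Extending scalars along \cref{eq:substitution} commutes with taking this quotient.

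The main obstacle is matching conventions. First, $\boldsymbol{\Lambda}$ must carry the correct powers $\ngmu^{\wri(\beta)}U^{(n-\wri(\beta)-1)/2}$; these reflect the zero-framing correction and Ng's normalization of the longitude in the framed version \cite{ngFramedKnot2008}. Second, the typo in $\boldsymbol{\Phi}$ noted above must be accounted for. I would handle both by comparison with \cite{ekholmFiltrationsKnot2013}, which states the result in exactly this form, checking a single generator $\sigma_1$ on two strands (the unknot) against the known augmentation polynomial to fix the signs.
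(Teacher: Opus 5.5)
Your proposal is correct and matches the paper's approach. The paper gives no independent argument: it cites the combinatorial DGA of \cite{ekholmFiltrationsKnot2013} and, as it says just before the lemma, reads off $\ngab K$ as the abelianized degree-0 generators modulo the differentials of the degree-1 generators $b_{ij},c_{ij},d_{ij}$, which is exactly your Steps 1--3. Your two added details are also right: $\partial(xby)=x(\partial b)y$ identifies the boundaries with the two-sided ideal generated by those differentials, and the diagonal of \cref{KCH1} holds automatically because $\phi_\beta(\mathbf{A})=\boldsymbol{\Phi}^L_\beta\mathbf{A}\boldsymbol{\Phi}^R_\beta$ fixes the diagonal entries $1-\ngmu$.
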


Note that \eqref{KCH1} follows from \eqref{KCH2} and \eqref{KCH3}.

\subsection{Comparison}\label{sec:comparison}
\begin{theorem}\label{main-theorem}
Let $K$ be a knot. We have an isomorphism of $\Laur{1}$-modules
\[\ngab K\cong \ourab K.\]
\end{theorem}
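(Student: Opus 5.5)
The plan is to compute $\ourab K$ explicitly from its definition as a tensor product ${}_{T_1}C'\otimes_{\Spidq{\mathbf{m},\boldsymbol{\epsilon}}{N}}C'_{T_2}$. We then match the resulting presentation with the presentation of $\ngab K$ given by \eqref{KCH1}--\eqref{KCH3}. Write $K$ as the closure of a braid $\beta$ on $n$ strands. Cut the closure so that the heavy object consists of the $n$ braid strands together with the $n$ returning strands, so that $\mathbf{m}=(m,\dots,m)$ with signs $(+,\dots,+,-,\dots,-)$. The bottom half is then the nested-cup module, twisted by $\mathsf{T}_\beta$ on the first $n$ strands, and the top half is the nested-cap module. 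At $q=1$ the category $\Spidp{\mathbf{m},\boldsymbol{\epsilon}}{N}$ on a fixed object is, by the analogue of \cref{cor:spider-basis}, the commutative ring $\C[GL_{2n}^*]$ base-changed along the label shifts. In the smash product with the $\ourlambda$'s, its degree-zero part is therefore generated by the elements $\gamma_{ij}$ for $1\le i\ne j\le 2n$, together with $\ourmu^{\pm1},\ourlambda^{\pm1},g^{\pm1}$. Hence $\ourab K$ is a cyclic module: it is the quotient of a polynomial ring in the $\gamma_{ij}$ (over $\Laur{1}$) by the annihilators of the cap vector on the left and the cup vector on the right.

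First, I compute these annihilators for the untwisted nested cups. Using \cref{eq:cup-isotope}, the relations \crefrange{E-blue-cup}{F-blue-cup}, and the framing-change relations \eqref{eq:framingchange1}--\eqref{eq:framingchange2}, every $\gamma_{i,j}$ that joins the $i$-th upward strand to a strand on the returning side can be slid around the cups. This expresses it in terms of $\gamma_{i,i'}$ for strands among the first $n$. The rung joining the two ends of a single cup contributes the scalar $q\ourmu-q^{-1}\ourmu^{-1}$ times a label shift, or $gq\ourmu^{-1}-g^{-1}q^{-1}\ourmu$ times a label shift, which at $q=1$ become $\ourmu-\ourmu^{-1}$ and $g\ourmu^{-1}-g^{-1}\ourmu$. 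Under \eqref{eq:substitution} these should become (up to units) the diagonal entries $1-\ngmu$ and $U-\ngmu$ of $\mathbf A$ and $\mathbf{\hat A}$. The label shift $\ourlambda$ appears only at the star, which is the source of the matrix $\boldsymbol\Lambda$ with its single nontrivial entry. Collecting the cup relations in matrix form, I expect them to say exactly that the $n\times n$ matrix of rungs from the bottom-right block equals $\mathbf{\hat A}$, or $\mathbf A$, times a correction, with $a_{ij}\mapsto\gamma_{ij}$.

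Second, I twist by $\beta$. By \cref{lem:Phi-braid} and \cref{rem:two-halves}, sliding the half-strands $\gamma_{i*}$ and $\gamma_{*i}$ through the braid multiplies the rung matrix on the left by $\boldsymbol\Phi^L_\beta$ and on the right by $\boldsymbol\Phi^R_\beta$, respectively. The cup relations transported through $\mathsf{T}_\beta$ then become \eqref{KCH2}, and the cap relations become \eqref{KCH3}. The framing correction of $-\wri(\beta)$ twists at the star contributes the powers $(-g\ourmu^{-2})^{-\wri(\beta)}$, which produce the factor $\ngmu^{\wri(\beta)}U^{(n-\wri(\beta)-1)/2}$ together with the sign in $\nglambda\mapsto -g^{-1}\ourlambda^{-1}$. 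This yields a surjection $\freeA\otimes\Laur{1}\to\ourab K$ that kills \eqref{KCH1}--\eqref{KCH3}, and hence a map $\ngab K\to\ourab K$.

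Third, I show this map is an isomorphism. For this I would show that the relations in the tensor product are generated by the cup/cap annihilators applied to the generator. This follows from the PBW basis: the cup module is cyclic, with annihilator the left ideal generated by the $\tilde E-(\cdots)\Lambda^{-1}$ and $\tilde F-(\cdots)\Lambda$ type elements for each nested cup. Consequently, no further relations appear at $q=1$ beyond commutativity, and the classical limit of these generators is exactly the list of matrix entries above.

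The main obstacle will be getting the bookkeeping exactly right: the framings of the paths $\gamma_{ij}$ for $i>j$ and for strands on opposite sides of the cups, the signs, and the placement of $\ourlambda$ at the star, so that the $\ngmu a_{ij}$ versus $Ua_{ij}$ asymmetry of $\mathbf A,\mathbf{\hat A}$ comes out on the nose. I would first do the unknot ($n=1$) and a single crossing by hand to fix these conventions, and then argue generally using \cref{lem:Phi-braid}. A secondary subtlety is checking that the base change to $q=1$ commutes with forming the tensor product, that is, that the lattice modules are flat enough. This should follow from the explicit cyclic presentation.
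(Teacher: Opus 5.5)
\paragraph{Verdict.} Your Steps 1--2, which derive the relations by sliding light strands around the cups and caps and through the braid via Lemma~\ref{lem:Phi-braid}, match the paper's proof that $Z$ is well defined. Step 3, however, has a genuine gap.

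\paragraph{The gap in Step 3.} Step 3 relies on the cap and cup modules being cyclic, so that $\ourab K$ is a polynomial ring in the $\gamma_{ij}$ modulo the sum of two annihilators and ``no further relations appear.'' This is false for $n\ge 2$.
\begin{itemize}
\item For a knot there is a single label shift $\ourlambda$, which moves every heavy label at once.
\item By contrast, ${}_{T_1}C'$ and $C'_{T_2}$ have basis vectors for every label vector $\mathbf p=(p_1,\dots,p_n)\in\Z^n$ (one label per cap or cup), times monomials in the $a_{ij}$ with $i\neq j\le n$.
\item Changing the label of a single cup by a rung across it costs a non-unit scalar, $\ourmu-\ourmu^{-1}$ or $g\ourmu^{-1}-g^{-1}\ourmu$ (the $q=1$ case of \eqref{E-blue-cup} and \eqref{F-blue-cup}). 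Every route from $v_{\mathbf 0}$ to $v_{\mathbf p}$ with non-constant $\mathbf p$ passes through such a scalar, so these vectors are not in the submodule generated by $v_{\mathbf 0}$.
\item You cannot discard those vectors. Inserting even one $\gamma_{ij}$ into the closure forces the cap and cup labels to be non-constant, because the label must return to itself after going once around the knot.
\item The $\gamma_{ij}$ are morphisms between different objects, so ``the polynomial ring in the $\gamma_{ij}$'' only makes sense after normalizing labels. Your sketch shows this tension: each cup rung comes with ``a label shift,'' yet $\ourlambda$ is supposed to appear only at the star.
\end{itemize}
So $R/(J+J')$ does not compute the tensor product, and Step 3 does not prove injectivity. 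A telltale sign is that you never use that the permutation of $\beta$ is an $n$-cycle. Your argument would therefore equally give $\ourab L\cong\ngab L$ for links, whereas Theorem~\ref{th:link-iso} only reaches the degree-$0$ part.

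\paragraph{What is missing.} The missing ingredient is the label bookkeeping, which is where the knot hypothesis enters.
\begin{itemize}
\item In the tensor product, a pair ${}_{\mathbf p,a}v\otimes v_{\mathbf p',a'}$ contributes only if the labels at the cut match.
\item Because $K$ is a knot, matching determines every label from $p_1$ (the label at the star) and the light-strand monomial.
\item This is why $\ourab K$ is cyclic, i.e.\ spanned by $\ourlambda^{-p_1}$ times monomials, so $Z$ is onto. It is not because the two factors are cyclic.
\item It also lets the paper write the inverse directly: $\tilde Y({}_{\mathbf p,a}v,v_{\mathbf p',a'})=\ourlambda^{-p_1}aa'$ when labels match, and $0$ otherwise.
\item $\Spidq{\mathbf{m},\boldsymbol{\epsilon}}{N}$-bilinearity is then checked against each $\gamma_{ij}$, $i,j\le 2n$. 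It is trivial for $i,j\le n$, and follows from \eqref{ourCH3}, \eqref{ourCH2}, \eqref{ourCH1} in the three remaining cases.
\end{itemize}
Each of these relations arises from sliding one half of a light strand over a cap, around a cup and up through the braid. They do not come from the caps or the cups separately, as your attribution of \eqref{KCH2} to cups and \eqref{KCH3} to caps suggests.

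\paragraph{Smaller corrections.}
\begin{itemize}
\item The isomorphism is not literally $a_{ij}\mapsto\gamma_{ij}$. You need the rescaling $\Psi\colon a_{ij}\mapsto(-g)^{-k(i,j)}a_{ij}$, which absorbs the $n$-dependent factor $U^{(n-1)/2}$ of $\boldsymbol\Lambda$ and produces the factor $-g$ in front of $\mathbf{\hat A}$. The $-\wri(\beta)$ framing twists contribute only $(-g\ourmu^{-2})^{\pm\wri(\beta)}$.
\item No flatness is needed to commute base change with the tensor product; that holds by right exactness. You do, however, need the $q=1$ cap and cup modules to have the expected bases.
\end{itemize}
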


The proof strategy is as follows: we construct a map $Z\colon \freeA \to \ourab{K}$ sending each generator $a_{ij}$ to a specific diagram in the braid closure. We then show that this map descends to $\ngab{K}$ by verifying that the relations that define $\ngab{K}$ (which come from the differentials in the knot contact homology dga) are satisfied in $\ourab{K}$. Surjectivity follows from showing that all diagrams in $\ourab{K}$ can be written in terms of the images of the $a_{ij}$. The key technical tool is ``sliding'' light strands through the braid closure and tracking how this affects the diagrams.

\begin{remark}
By \cite[Th. 1.2]{cornwellKCHRepresentations2017},  every component where $\ngmu\neq 1$ of the spectrum of the specialization of $\ngab K$ at $U=1$ is a component of the moduli space of KCH representations of $\pi_1(S^3\setminus K)$ of a given dimension.  It seems to be widely expected, but not proven, that this moduli space is always 1-dimensional.  This, together with \cref{main-theorem}, would imply that $\ourabq{K}$  is holonomic, analogous to the result of \cite{garoufalidisColoredHOMFLYPT2018}.  

The spectrum of $\ngab K$ is called the ``full augmentation variety'' in \cite{diogoAugmentationsAnnuli2025}; a proof that this affine variety is 2-dimensional for all knots (as would be expected from holonomicity) has been announced by Dimitroglou Rizell and Legout, but the full proof has not yet appeared \cite{dimitroglourizellRelativeCalabi2024}.  
\end{remark}
    
This isomorphism is induced by sending $a_{ij}$ to the diagram $\aij$ inserted into the braid closure above the braid and the star that marks the twists required to fix the framing.  We also fix the labeling on thick strands so that it is $m$ at the star.  Thus, if we shift a light strand past the star, we must act by the automorphism $\ourlambda$ to keep labels consistent. This is easiest to remember via the local relation in \cite[(E.13-14)]{gaiottoDbranesPlanar2026}:
\begin{align}\label{eq:lambda-local}
\begin{tikzpicture}[baseline=-3]
\node (C) at (0,1) {};
\node (E) at (.2,0) {*};
\node (D) at (0,-1) {};
\node (A) at (-1,-0.2) {};
\coordinate (i) at (0,-0.2) {};
\draw[rung>] (A) -- (i);
\draw[lower>,very thick] (D) -- (C);
\end{tikzpicture}\quad=&\quad \ourlambda\begin{tikzpicture}[baseline=-3]
\node (C) at (0,1) {};
\node (E) at (.2,0) {*};
\node (D) at (0,-1) {};
\node (A) at (-1,0.2) {};
\coordinate (j) at (0,0.2) {};
\draw[rung>] (A) -- (j);
\draw[mid>,very thick] (D)-- (C);
\end{tikzpicture} \;,
&
\begin{tikzpicture}[baseline=-3]
\node (C) at (0,1) {};
\node (E) at (.2,0) {*};
\node (D) at (0,-1) {};
\node (A) at (-1,-0.2) {};
\coordinate (i) at (0,-0.2) {};
\draw[rung>] (i) -- (A);
\draw[mid>,very thick] (D) -- (C);
\end{tikzpicture}\quad=&\quad \ourlambda^{-1}\begin{tikzpicture}[baseline=-3]
\node (C) at (0,1) {};
\node (E) at (.2,0) {*};\node (D) at (0,-1) {};
\node (A) at (-1,0.2) {};
\coordinate (j) at (0,0.2) {};
\draw[rung>] (j) -- (A);
\draw[mid>,very thick] (D) -- (C);
\end{tikzpicture} \;.
\end{align}

In the diagram below, we show the image of $a_{14}$ for a 4-strand braid:
\begin{equation}
	\begin{tikzpicture}[baseline]
	\draw[very thick,lower>] (-1.2,0) -- (-1.2,1) to[out=90,in=180]  node[pos=.4,inner sep=0] (A) {} node[pos=.2, left]{$*$}  (2.5,3)to[out=0,in=90]  (6.2,1)-- (6.2,-1) to[out=-90,in=0] (2.5,-3) to[out=180, in=-90] (-1.2,-1)--cycle;
	\draw[very thick,lower>] (-.4,0) -- (-.4,1) to[out=90,in=180] node[pos=.4] (B) {}(2.5,2.6)to[out=0,in=90]  (5.4,1)-- (5.4,-1) to[out=-90,in=0] (2.5,-2.6) to[out=180, in=-90] (-.4,-1)--cycle;
	\draw[very thick,lower>] (.4,0) -- (.4,1) to[out=90,in=180] node[pos=.4] (C)  {}(2.5,2.2)to[out=0,in=90]  (4.6,1)-- (4.6,-1) to[out=-90,in=0] (2.5,-2.2) to[out=180, in=-90] (.4,-1)--cycle;
		\draw[very thick,lower>] (1.2,0) -- (1.2,1) to[out=90,in=180] node[pos=.4,inner sep=0] (D) {} (2.5,1.8) to[out=0,in=90]  (3.8,1)-- (3.8,-1) to[out=-90,in=0] (2.5,-1.8) to[out=180, in=-90] (1.2,-1)--cycle;
	\node[inner xsep=40pt,inner ysep=30pt,draw,very thick,fill=white] (box) at (0,0){$\beta$};
	\draw[rung>] (A)--(B)--(C)--(D);
	\end{tikzpicture} \;.
\end{equation}
\begin{remark}
	This is the first point where the fact that $K$ is a knot is used: if $K$ is a link and we draw a light strand connecting two different components of $K$, then there is no consistent way of labeling the heavy strands, and thus this does not give a well-defined element of $\ourab K$.  We discuss in \cref{sec:links} how to modify this construction to work for links. 
\end{remark}

Note that in our notation, \[\boldsymbol{\Lambda}=\diag(-\ourlambda^{-1}\ourmu^{-2\wri(\beta)}g^{\wri(\beta)-n},1,\dots, 1)=\diag(\ourlambda^{-1}\ourmu^{-2\wri(\beta)}(-g)^{\wri(\beta)-n},1,\dots, 1);\] since $K$ is a knot, the underlying permutation of $\beta$ is always an $n$-cycle and so $\wri(\beta)-n$ is always odd.

Thus, for each $i$, there is a unique integer $d(i)\in\{0,\dots, n-1\}$ such that $1=\beta^{d(i)}(i)$.  Note that 
\begin{equation}
    d(\beta(i))=\begin{cases}
    d(i)-1,& i\neq 1\\
    n-1,& i=1.
\end{cases}
\end{equation}
Let $k(i,j)=d(j)-d(i)$.\notation{$d(i),k(i,j), D,D_{\beta}$}{Statistics and diagonal matrices that measure how many iterations of $\beta$ send $i$ to $1$.}  Let us rewrite the definitions above in terms that will be more convenient for us.  Let $D=\diag((-g)^{d(1)},\dots, (-g)^{d(n)})$.  Consider the automorphism $\Psi$ of $\freeA$ that sends $\mathbf{A}\to D\mathbf{A}D^{-1}$, that is $a_{ij}\mapsto (-g)^{-k(i,j)}a_{ij}$.  Let $D_{\beta}=\diag((-g)^{d(\beta(1))},\dots, (-g)^{d(\beta(n))})$.  Note that 
\[-g\boldsymbol{\Lambda}D_{\beta}D^{-1}=\boldsymbol{\Lambda}':=\diag(\ourlambda^{-1}\ourmu^{-2\wri(\beta)}(-g)^{\wri(\beta)},1,\dots, 1).\]

\begin{lemma} The automorphism $\Psi$ acts on the matrices $\boldsymbol{\Phi}^L_{\beta}, \boldsymbol{\Phi}^R_{\beta}$ by the formulas
    \[\Psi(\boldsymbol{\Phi}^L_{\beta})=D_{\beta}\boldsymbol{\Phi}^L_{\beta}D^{-1},\qquad \Psi(\boldsymbol{\Phi}^R_{\beta})=D\boldsymbol{\Phi}^R_{\beta}D^{-1}_{\beta}.\]
\end{lemma}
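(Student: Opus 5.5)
The plan is to deduce the lemma from a more general equivariance property of Ng's braid action with respect to ``diagonal rescalings'' of the generators. The specific values $(-g)^{d(i)}$ play no role. For any vector of units $c=(c_1,\dots,c_n)$, define the automorphism $S_c$ of $\freeA$ by $a_{ij}\mapsto c_ic_j^{-1}a_{ij}$; then $\Psi=S_c$ for $c_i=(-g)^{d(i)}$. When we use the auxiliary index $*$ from the definition of $\boldsymbol{\Phi}^L_\beta,\boldsymbol{\Phi}^R_\beta$, we extend $c$ by an arbitrary unit $c_*$.

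\emph{Step 1.} I will show that for every braid $\beta$,
\[ S_c\circ\phi_\beta=\phi_\beta\circ S_{c^\beta}, \]
where $c^\beta$ is $c$ permuted by the underlying permutation of $\beta$ and $c^\beta_*=c_*$. Both sides are multiplicative in $\beta$, so it suffices to check this on a generator $\sigma_k$, with $c^{\sigma_k}$ the vector $c$ with $c_k$ and $c_{k+1}$ swapped. This is a direct check on each line of the definition of $\phi_{\sigma_k}$. For example, $a_{k,k+1}\mapsto -a_{k+1,k}$ is compatible with the rescaling because $c_k/c_{k+1}=c^{\sigma_k}_{k+1}/c^{\sigma_k}_k$. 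For the nonlinear formulas such as $a_{ki}\mapsto a_{k+1,i}-a_{k+1,k}a_{ki}$, the right-hand side is homogeneous: each monomial scales by the same telescoping factor $c^{\sigma_k}_{k+1}/c^{\sigma_k}_i=c_k/c_i$.

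\emph{Step 2.} Next I apply Step 1 to $a_{i*}$. On one hand,
\[ S_c(\phi_\beta(a_{i*}))=\sum_j S_c\bigl((\boldsymbol{\Phi}^L_\beta)_{ij}\bigr)\,c_jc_*^{-1}a_{j*}. \]
On the other hand,
\[ \phi_\beta(S_{c^\beta}a_{i*})=c^\beta_ic_*^{-1}\sum_j(\boldsymbol{\Phi}^L_\beta)_{ij}a_{j*}. \]
The entries of $\boldsymbol{\Phi}^L_\beta$ lie in the subalgebra generated by the $a_{kl}$ with $k,l\neq *$, and $\freeA$ is free over this subalgebra on the monomials in the $a_{j*}$. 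So I can compare coefficients of $a_{j*}$ to get
\[ S_c\bigl((\boldsymbol{\Phi}^L_\beta)_{ij}\bigr)=c^\beta_i\,(\boldsymbol{\Phi}^L_\beta)_{ij}\,c_j^{-1}, \]
that is, $S_c(\boldsymbol{\Phi}^L_\beta)=\diag(c^\beta)\,\boldsymbol{\Phi}^L_\beta\,\diag(c)^{-1}$. The same argument with $a_{*j}$ gives
\[ S_c(\boldsymbol{\Phi}^R_\beta)=\diag(c)\,\boldsymbol{\Phi}^R_\beta\,\diag(c^\beta)^{-1}. \]
As an independent check, this is consistent with the identity $\phi_\beta(\mathbf{A})=\boldsymbol{\Phi}^L_\beta\mathbf{A}\boldsymbol{\Phi}^R_\beta$, since $S_c(\mathbf{A})=\diag(c)\mathbf{A}\diag(c)^{-1}$.

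\emph{Step 3.} It then remains to specialize $c_i=(-g)^{d(i)}$ and identify $\diag(c^\beta)$ with $D_\beta$, that is, $c^\beta_i=c_{\beta(i)}$. I expect this convention bookkeeping to be the main (if modest) obstacle: whether $c^\beta$ is $c\circ\beta$ or $c\circ\beta^{-1}$ depends on how the permutation of $\beta$ is read. I would settle it on $\sigma_k$. There $\phi_{\sigma_k}(a_{k+1,i})=a_{ki}$, so row $k+1$ of $\boldsymbol{\Phi}^L_{\sigma_k}$ has its $1$ in column $k$, forcing $c^{\sigma_k}_{k+1}=c_k$. I would then check that this agrees with the convention implicit in $d(\beta(i))=d(i)-1$ for $i\neq 1$, i.e.\ in $1=\beta^{d(i)}(i)$. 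If the conventions turn out to be opposite, the fix is simply to read $\beta(i)$ in $D_\beta$ as the image under the permutation used in that relation. The identity of Steps 1 and 2 holds for arbitrary $c$, so no property of $d$ beyond this indexing is needed.
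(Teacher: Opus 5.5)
Your argument is correct, but it is organized differently from the paper's.

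\textbf{The paper's route.} It inducts on the braid word using $\boldsymbol{\Phi}^L_{\beta_1\beta_2}=\phi_{\beta_1}(\boldsymbol{\Phi}^L_{\beta_2})\boldsymbol{\Phi}^L_{\beta_1}$. The claim is that $(\boldsymbol{\Phi}^L_\beta)_{ij}$ is a sum of ``path'' monomials $a_{\beta(i),j_1}a_{j_1,j_2}\cdots a_{j_\ell,j}$. The scaling $(-g)^{d(\beta(i))-d(j)}$ under $\Psi$ is then read off by telescoping.

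\textbf{Your route.} You prove the equivariance $S_c\circ\phi_\beta=\phi_\beta\circ S_{c^\beta}$ of the automorphisms themselves, checking it only on generators. You then extract the entries of $\boldsymbol{\Phi}^L_\beta$ and $\boldsymbol{\Phi}^R_\beta$ by comparing coefficients of $a_{j*}$ and $a_{*j}$. This uses that the $(n+1)$-strand algebra is a polynomial ring over the $n$-strand one.

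\textbf{Comparison.} Both arguments rest on the same grading, with $a_{ij}$ of weight $e_i-e_j$. The paper's ``easy'' inductive step implicitly needs to know how $\phi_{\sigma_k}$ transforms a path monomial, which is exactly your Step~1. Your version buys three things:
\begin{itemize}
\item it isolates that generator check as the only computation and never manipulates matrix products;
\item it treats $\boldsymbol{\Phi}^L$ and $\boldsymbol{\Phi}^R$ uniformly;
\item it makes plain that no property of $d$ is used.
\end{itemize}
The paper's route gives a more informative combinatorial description of the entries.

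\textbf{A correction to Step~3.} Checking on $\sigma_k$ cannot settle the $\beta$ versus $\beta^{-1}$ convention, because the permutation of $\sigma_k$ is an involution. What fixes it is the composition law. Use $\phi_{\beta_1\beta_2}=\phi_{\beta_1}\circ\phi_{\beta_2}$, which is the convention matching the product rule for $\boldsymbol{\Phi}^L$. Your Step~1 then forces $c^{\beta_1\beta_2}=(c^{\beta_1})^{\beta_2}$. Hence $c^\beta_i=c_{\beta(i)}$, where $\beta=\sigma_{k_1}\cdots\sigma_{k_m}$ acts by $s_{k_1}\circ\cdots\circ s_{k_m}$. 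This is the same convention the paper's path-monomial claim presupposes, and with it $\diag(c^\beta)=D_\beta$.
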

\begin{proof}
One can easily check by induction that the $ij$-entry of $\boldsymbol{\Phi}^L_{\beta}$ is a sum of monomials $a_{\beta(i),j_1}\allowbreak a_{j_1,j_2}\allowbreak \cdots \allowbreak a_{j_{\ell},j}$ for different sequences $j_1,\dots, j_{\ell}$. The automorphism $\Psi$ acts on these by multiplication by $(-g)^{-k(\beta(i),j)}$.  The result follows for $\boldsymbol{\Phi}^L_{\beta}$. The argument for $\boldsymbol{\Phi}^R_{\beta}$ is identical with indices reversed.
\end{proof}
Thus, when we apply $\Psi$ to the equations \crefrange{KCH1}{KCH3}, we obtain the modified relations:
\begin{align}\label{ourCH1}
  \mathbf{A}&=\boldsymbol{\Lambda}'\cdot \phi_{\beta}(\mathbf{A}) \cdot (\boldsymbol{\Lambda}')^{-1},\\  \label{ourCH2} -g\mathbf{\hat{A}}&=\boldsymbol{\Lambda}'\cdot \boldsymbol{\Phi}^L_{\beta}\cdot \mathbf{A},\\ \label{ourCH3}
  \mathbf{A}&=-g\mathbf{\hat{A}}\cdot \boldsymbol{\Phi}^R_{\beta}\cdot(\boldsymbol{\Lambda}')^{-1}.  
\end{align}

We can rewrite the matrices $\mathbf{A}$ and $\mathbf{\hat{A}}$ identifying the formal symbols $a_{ij}$ with diagrams $\aij$ defined in \cref{eq:aij-def}. It will be more convenient for our proof to consider $-g\mathbf{\hat{A}}$, so we write out the entries in our conventions:
\[\mathbf{A}_{ij}=\begin{cases} a_{ij}, & i< j\\ -\ourmu^{-2} a_{ij}, & i>j\\
1-\ourmu^{-2}, & i=j,
\end{cases}\qquad \qquad-g\mathbf{\hat{A}}_{ij}=\begin{cases} -g^{-1}a_{ij}, & i< j\\ g\ourmu^{-2} a_{ij}, & i>j\\
g\ourmu^{-2}-g^{-1}, & i=j.
\end{cases}\]

If we map $a_{ij}\to \aij$, then we will map the entries of $\mathbf{A}_{ij}$ to  diagrams which we can simplify using the framing change relations \crefrange{eq:framingchange1}{eq:framingchange2},
\begin{align}
	\begin{tikzpicture}[baseline=20]
	\laddercoordinates{4}{1}
	\node (A) at (l00) {$m_i$};
	\node (B) at (l01) {$m_i$};
	\node (C) at (l30) {$m_j$};
	\node (D) at (l31) {$m_j$};
	\node (A1) at (l10) {\phantom{$m_i$}};
	\node (B1) at (l11) {\phantom{$m_i$}};
	\node (C1) at (l20) {\phantom{$m_i$}};
	\node (D1) at (l21) {\phantom{$m_i$}};
	\node[inner sep=0pt] (i) at (0,.5 *\ladderY){};
\node[inner sep=0pt] (j) at (3*\ladderX,.5 *\ladderY){};
	\node[inner sep=5pt] (i1) at (intersection of A1--B1 and i--j){};
		\node[inner sep=5pt] (i2) at (intersection of C1--D1 and i--j){};
	\draw[mid>,very thick] (A)--(B);
	\draw[mid>,very thick] (C)--(D); 
		\draw[mid>,very thick] (A1)--(B1);
	\draw[mid>,very thick] (C1)--(D1); 
	\draw[rung>] (i)--(i1);
	\draw[rung>] (i1)--(i2);
		\draw[rung>] (i2)--(j);	
\end{tikzpicture} &= \ourmu^{-1}\begin{tikzpicture}[baseline=20]
	\laddercoordinates{4}{1}
	\node (A) at (l00) {$m_i$};
	\node (B) at (l01) {$m_i$};
	\node (C) at (l30) {$m_j$};
	\node (D) at (l31) {$m_j$};
	\node (A1) at (l10) {\phantom{$m_i$}};
	\node (B1) at (l11) {\phantom{$m_i$}};
	\node (C1) at (l20) {\phantom{$m_i$}};
	\node (D1) at (l21) {\phantom{$m_i$}};
	\node[inner sep=0pt] (i) at (0,.5 *\ladderY){};
\node[inner sep=3pt] (j) at (3*\ladderX,.5 *\ladderY){};
\node[inner sep=0pt] (ii) at (0, .5*\ladderY-.3){};
\node[inner sep=0pt] (jj) at (3*\ladderX,.5 *\ladderY+.3){};
	\node[inner sep=5pt] (i1) at (intersection of A1--B1 and i--j){};
		\node[inner sep=5pt] (i2) at (intersection of C1--D1 and i--j){};
	\draw[mid>,very thick] (A)--(B);
	\draw[mid>,very thick] (C)--(D); 
		\draw[mid>,very thick] (A1)--(B1);
	\draw[mid>,very thick] (C1)--(D1); 
	\draw[rung>]  (i) to (i1);
	\draw[rung] (jj) to[out=0,in=90] (3*\ladderX + .3,.5 *\ladderY+.15) to[out=-90,in=0]  (j);
	\draw[rung>] (i1)--(i2);
		\draw[rung>] (i2)--(j);	
\end{tikzpicture}& i < j,\label{eq:A-matrix1}\\
\ourmu^{-2}\begin{tikzpicture}[baseline=20]
	\laddercoordinates{4}{1}
	\node (A) at (l00) {$m_j$};
	\node (B) at (l01) {$m_j$};
	\node (C) at (l30) {$m_i$};
	\node (D) at (l31) {$m_i$};
	\node (A1) at (l10) {\phantom{$m_i$}};
	\node (B1) at (l11) {\phantom{$m_i$}};
	\node (C1) at (l20) {\phantom{$m_i$}};
	\node (D1) at (l21) {\phantom{$m_i$}};
	\node[inner sep=3pt] (i) at (0,.5 *\ladderY){};
\node[inner sep=3pt] (j) at (3*\ladderX,.5 *\ladderY){};
\node[inner sep=0pt] (ii) at (0, .5*\ladderY-.3){};
\node[inner sep=0pt] (jj) at (3*\ladderX,.5 *\ladderY+.3){};
	\node[inner sep=5pt] (i1) at (intersection of A1--B1 and i--j){};
		\node[inner sep=5pt] (i2) at (intersection of C1--D1 and i--j){};
	\draw[mid>,very thick] (A)--(i) ;
	\draw[mid>,very thick] (i)--(B);
	\draw[mid>,very thick] (C)--(D); 
		\draw[mid>,very thick] (A1)--(B1);
	\draw[mid>,very thick] (C1)--(D1); 
	\draw[rung<] (ii) to[out=180,in=-90] (-.3,.5 *\ladderY-.15) to[out=90,in=180] (0,.5 *\ladderY) to (i1);
	\draw[rung] (jj) to[out=0,in=90] (3*\ladderX + .3,.5 *\ladderY+.15) to[out=-90,in=0]  (j);
	\draw[rung<] (i1)--(i2);
		\draw[rung<] (i2)--(j);	
\end{tikzpicture} &=\ourmu^{-1} \begin{tikzpicture}[baseline=20]
	\laddercoordinates{4}{1}
	\node (A) at (l00) {$m_j$};
	\node (B) at (l01) {$m_j$};
	\node (C) at (l30) {$m_i$};
	\node (D) at (l31) {$m_i$};
	\node (A1) at (l10) {\phantom{$m_i$}};
	\node (B1) at (l11) {\phantom{$m_i$}};
	\node (C1) at (l20) {\phantom{$m_i$}};
	\node (D1) at (l21) {\phantom{$m_i$}};
	\node[inner sep=3pt] (i) at (0,.5 *\ladderY){};
\node[inner sep=3pt] (j) at (3*\ladderX,.5 *\ladderY){};
\node[inner sep=0pt] (ii) at (0, .5*\ladderY-.3){};
\node[inner sep=0pt] (jj) at (3*\ladderX,.5 *\ladderY-.3){};
	\node[inner sep=5pt] (i1) at (intersection of A1--B1 and i--j){};
		\node[inner sep=5pt] (i2) at (intersection of C1--D1 and i--j){};
	\draw[mid>,very thick] (A)--(B);
	\draw[mid>,very thick] (C)--(D); 
		\draw[mid>,very thick] (A1)--(B1);
	\draw[mid>,very thick] (C1)--(D1); 
	\draw[rung<] (0,.5 *\ladderY) to (i1);
	\draw[rung] (jj) to[out=0,in=-90] (3*\ladderX + .3,.5 *\ladderY-.15) to[out=90,in=0]  (j);
	\draw[rung<] (i1)--(i2);
		\draw[rung<] (i2)--(j);	
\end{tikzpicture} & j < i, \label{eq:A-matrix2}
\end{align}
and the entries of $-g\mathbf{\hat A}_{ij}$ to 
\begin{align}
	-g^{-1}\begin{tikzpicture}[baseline=20]
	\laddercoordinates{4}{1}
	\node (A) at (l00) {$m_i$};
	\node (B) at (l01) {$m_i$};
	\node (C) at (l30) {$m_j$};
	\node (D) at (l31) {$m_j$};
	\node (A1) at (l10) {\phantom{$m_i$}};
	\node (B1) at (l11) {\phantom{$m_i$}};
	\node (C1) at (l20) {\phantom{$m_i$}};
	\node (D1) at (l21) {\phantom{$m_i$}};
	\node[inner sep=0pt] (i) at (0,.5 *\ladderY){};
\node[inner sep=0pt] (j) at (3*\ladderX,.5 *\ladderY){};
	\node[inner sep=5pt] (i1) at (intersection of A1--B1 and i--j){};
		\node[inner sep=5pt] (i2) at (intersection of C1--D1 and i--j){};
	\draw[mid>,very thick] (A)--(B);
	\draw[mid>,very thick] (C)--(D); 
		\draw[mid>,very thick] (A1)--(B1);
	\draw[mid>,very thick] (C1)--(D1); 
	\draw[rung>] (i)--(i1);
	\draw[rung>] (i1)--(i2);
		\draw[rung>] (i2)--(j);	
\end{tikzpicture} &= \ourmu^{-1}\begin{tikzpicture}[baseline=20]
	\laddercoordinates{4}{1}
	\node (A) at (l00) {$m_i$};
	\node (B) at (l01) {$m_i$};
	\node (C) at (l30) {$m_j$};
	\node (D) at (l31) {$m_j$};
	\node (A1) at (l10) {\phantom{$m_i$}};
	\node (B1) at (l11) {\phantom{$m_i$}};
	\node (C1) at (l20) {\phantom{$m_i$}};
	\node (D1) at (l21) {\phantom{$m_i$}};
	\node[inner sep=0pt] (i) at (0,.5 *\ladderY){};
\node[inner sep=3pt] (j) at (3*\ladderX,.5 *\ladderY){};
\node[inner sep=0pt] (ii) at (0, .5*\ladderY-.3){};
\node[inner sep=0pt] (jj) at (3*\ladderX,.5 *\ladderY-.3){};
	\node[inner sep=5pt] (i1) at (intersection of A1--B1 and i--j){};
		\node[inner sep=5pt] (i2) at (intersection of C1--D1 and i--j){};
	\draw[mid>,very thick] (A)--(B);
	\draw[mid>,very thick] (C)--(D); 
		\draw[mid>,very thick] (A1)--(B1);
	\draw[mid>,very thick] (C1)--(D1); 
	\draw[rung>]  (i) to (i1);
	\draw[rung] (jj) to[out=0,in=-90] (3*\ladderX + .3,.5 *\ladderY-.15) to[out=90,in=0]  (j);
	\draw[rung>] (i1)--(i2);
		\draw[rung>] (i2)--(j);	
\end{tikzpicture}& i < j,\\
-g\ourmu^{-2}\begin{tikzpicture}[baseline=20]
	\laddercoordinates{4}{1}
	\node (A) at (l00) {$m_j$};
	\node (B) at (l01) {$m_j$};
	\node (C) at (l30) {$m_i$};
	\node (D) at (l31) {$m_i$};
	\node (A1) at (l10) {\phantom{$m_i$}};
	\node (B1) at (l11) {\phantom{$m_i$}};
	\node (C1) at (l20) {\phantom{$m_i$}};
	\node (D1) at (l21) {\phantom{$m_i$}};
	\node[inner sep=3pt] (i) at (0,.5 *\ladderY){};
\node[inner sep=3pt] (j) at (3*\ladderX,.5 *\ladderY){};
\node[inner sep=0pt] (ii) at (0, .5*\ladderY-.3){};
\node[inner sep=0pt] (jj) at (3*\ladderX,.5 *\ladderY+.3){};
	\node[inner sep=5pt] (i1) at (intersection of A1--B1 and i--j){};
		\node[inner sep=5pt] (i2) at (intersection of C1--D1 and i--j){};
	\draw[mid>,very thick] (A)--(i) ;
	\draw[mid>,very thick] (i)--(B);
	\draw[mid>,very thick] (C)--(D); 
		\draw[mid>,very thick] (A1)--(B1);
	\draw[mid>,very thick] (C1)--(D1); 
	\draw[rung<] (ii) to[out=180,in=-90] (-.3,.5 *\ladderY-.15) to[out=90,in=180] (0,.5 *\ladderY) to (i1);
	\draw[rung] (jj) to[out=0,in=90] (3*\ladderX + .3,.5 *\ladderY+.15) to[out=-90,in=0]  (j);
	\draw[rung<] (i1)--(i2);
		\draw[rung<] (i2)--(j);	
\end{tikzpicture} &=\ourmu^{-1} \begin{tikzpicture}[baseline=20]
	\laddercoordinates{4}{1}
	\node (A) at (l00) {$m_j$};
	\node (B) at (l01) {$m_j$};
	\node (C) at (l30) {$m_i$};
	\node (D) at (l31) {$m_i$};
	\node (A1) at (l10) {\phantom{$m_i$}};
	\node (B1) at (l11) {\phantom{$m_i$}};
	\node (C1) at (l20) {\phantom{$m_i$}};
	\node (D1) at (l21) {\phantom{$m_i$}};
	\node[inner sep=3pt] (i) at (0,.5 *\ladderY){};
\node[inner sep=3pt] (j) at (3*\ladderX,.5 *\ladderY){};
\node[inner sep=0pt] (ii) at (0, .5*\ladderY-.3){};
\node[inner sep=0pt] (jj) at (3*\ladderX,.5 *\ladderY+.3){};
	\node[inner sep=5pt] (i1) at (intersection of A1--B1 and i--j){};
		\node[inner sep=5pt] (i2) at (intersection of C1--D1 and i--j){};
	\draw[mid>,very thick] (A)--(B);
	\draw[mid>,very thick] (C)--(D); 
		\draw[mid>,very thick] (A1)--(B1);
	\draw[mid>,very thick] (C1)--(D1); 
	\draw[rung<] (0,.5 *\ladderY) to (i1);
	\draw[rung] (jj) to[out=0,in=90] (3*\ladderX + .3,.5 *\ladderY+.15) to[out=-90,in=0]  (j);
	\draw[rung<] (i1)--(i2);
		\draw[rung<] (i2)--(j);	
\end{tikzpicture} & j < i. 
\end{align}
Note that the RHS of these equations also matches the diagonal entries of the matrices by the $q\to 1$ limit of \cref{eq:bigon2}:
\begin{equation}
	  \begin{tikzpicture}[baseline=30]
    \foreach \n in {0,...,3} {
      \coordinate (z\n) at (0.4*\n, 0.8*\n);
    }
    \draw[mid>,very thick] (z0) -- node[right] {$m$} (z1);
    \draw[mid>,very thick] (z2) -- node[right] {$m$} (z3);
    \draw[mid>,very thick] (z1) to[out=150,in=-190] node[left] {$m-1$} (z2);
    \draw[rung>] (z1) to[out=-30,in=0]    (z2);
  \end{tikzpicture}
  = (\ourmu-\ourmu^{-1})
  \tikz[baseline=30]{\draw[mid>,very thick] (0,0) -- node[right] {$m$} (1,2);} \;,
  \qquad
  \begin{tikzpicture}[baseline=30]
    \foreach \n in {0,...,3} {
      \coordinate (z\n) at (0.4*\n, 0.8*\n);
    }
    \draw[mid>,very thick] (z0) -- node[right] {$m$} (z1);
    \draw[mid>,very thick] (z2) -- node[right] {$m$} (z3);
    \draw[mid>,very thick] (z1) to[out=150,in=-190] node[left] {$m+1$} (z2);
    \draw[rung<] (z1) to[out=-30,in=0]   (z2);
  \end{tikzpicture}
  = (g\ourmu^{-1}-g^{-1}\ourmu)
  \tikz[baseline=30]{\draw[mid>,very thick] (0,0) -- node[right] {$m$} (1,2);} \;.
\end{equation}
Written in this form, we readily see that: 
\begin{lemma}
	We have a surjective map \[Z\colon \ngab K\to  \ourab K.\]  
\end{lemma}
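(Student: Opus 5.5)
We define $Z$ on the free commutative algebra $\freeA$ (with scalars extended to $\Laur{1}$ via \cref{eq:substitution}) by sending $a_{ij}$ to the diagram $\aij$ inserted into the braid closure just above $\beta$ and the star. Products are taken by stacking these light strands. The first point to check is that this is well-defined on $\freeA$, i.e.\ that the $\aij$ commute in $\ourab K$. At $q=1$ the category $\Spidp{\mathbf{m},\boldsymbol{\epsilon}}{N}$ is the specialization of the integral form $U_q(\mathfrak{gl}_n)'$, whose $q\to 1$ limit is commutative (functions on $GL_n^*$). So the blue-rung diagrams commute, and so do the scalars $\ourmu,g$. The operator $\ourlambda$ commutes with $\ourmu$ at $q=1$ because the quantum torus relation $\ourmu\ourlambda=q\ourlambda\ourmu$ degenerates. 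Hence $Z\colon \freeA\otimes\Laur{1}\to\ourab K$ is a ring-type map of $\Laur{1}$-modules.

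Next we show that $Z$ kills the relations \crefrange{ourCH2}{ourCH3}; these are equivalent to \crefrange{KCH2}{KCH3} via $\Psi$, and \eqref{KCH1} then follows. The entries of $\mathbf{A}$ and $-g\mathbf{\hat{A}}$ have been identified with diagrams in which a light strand leaves strand $i$ and ends on strand $j$, with its endpoint attached either just above or just below the heavy strand, up to framing curls. The diagonal entries fit this pattern by the $q\to1$ bigon relations. Following \cref{rem:two-halves}, we take the strand representing $\mathbf{A}_{ij}$ and slide its starting half $\gamma_{i*}$ down around the closure: through the bottom cups, up through $\beta$, and back to the top. By \cref{lem:Phi-braid}, passing through $\beta$ multiplies on the left by $\boldsymbol{\Phi}^L_{\beta}$. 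Passing the star contributes $\ourlambda^{\pm1}$ by \cref{eq:lambda-local}, but only on strand $1$. The $-\wri(\beta)$ framing twists and the winding around the closure contribute powers of $-g\ourmu^{-2}$ and $-g$ via \crefrange{eq:framingchange1}{eq:framingchange2}. Together these should produce exactly $\boldsymbol{\Lambda}'$, the extra factor $-g$, and the conversion of an $\mathbf{A}$-type endpoint into a $-g\mathbf{\hat A}$-type endpoint, giving \eqref{ourCH2}. Sliding the terminal half $\gamma_{*j}$ the other way gives \eqref{ourCH3} with $\boldsymbol{\Phi}^R_{\beta}$. The bookkeeping of $d(i)$ explains why the conjugation by $D$ was needed: each trip of strand $i$ around the closure to strand $\beta(i)$ picks up one factor of $-g$ from the cup/cap framing.

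Surjectivity comes from \cref{cor:spider-basis} and its $\Spidq{}$ analogue. Every element of $\ourab K$ is a $\Laur{1}$-combination of diagrams consisting of the heavy closure with light ladders placed above the braid. We can place them there because any light strand can be isotoped through the cups, caps and the braid, past the star at the cost of $\ourlambda$, into the region just above $\beta$. In that region the diagrams are spanned by the products $c(B)$ of the root diagrams. These equal products of $\aij$ up to the $K_i$-scalars $\ourmu,g$, which are units in $\Laur{1}$, and up to label shifts absorbed by $\ourlambda$. So the image of $Z$ contains a spanning set, and $Z$ descends to a surjection $\ngab K\to\ourab K$.

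The main obstacle is the sign and power bookkeeping in the sliding step. One must check that the accumulated framing factors, the $\ourlambda$ from the star, and the $\ourmu^{-2\wri(\beta)}$ from Reidemeister I moves of light strands around the heavy braid combine to precisely $\boldsymbol{\Lambda}'$ and $-g$. I would first verify this for a single generator $\sigma_1$ on two strands (and the unknot, recovering the example in the introduction), and then reduce the general case to \cref{lem:Phi-braid} by multiplicativity.
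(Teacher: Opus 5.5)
Your proposal is correct and follows the paper's route: you verify \eqref{ourCH2}--\eqref{ourCH3} exactly as the paper does, by sliding one half of the light strand around the closure and through $\beta$ (\cref{rem:two-halves}, \cref{lem:Phi-braid}), with the star and the $-\wri(\beta)$ framing twists producing $\boldsymbol{\Lambda}'$ via \cref{eq:lambda-local} and the curl relation. Your explicit commutativity check and your surjectivity argument (moving light strands through the caps to the region above $\beta$ and expanding in monomials in the $\aij$) fill in steps that the paper's proof of this lemma leaves implicit; the surjectivity step is essentially the basis argument the paper uses afterwards to construct the inverse $Y$.
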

\begin{proof}
	
Whichever order $i$ and $j$ come in, we divide the light strand at its rightmost point into two curves: its {\it first half}, connecting the $i$th heavy strand, and its {\it second half}, connecting the $j$th heavy strand.

To prove that \cref{ourCH3} holds, we start with \crefrange{eq:A-matrix1}{eq:A-matrix2} and slide the second half of the light strand clockwise around the closed braid to obtain a diagram like the one below:
\begin{equation}
\ourmu\mathbf{A}_{ij}=\,	\begin{tikzpicture}[baseline]
	\newcommand{\braidwidth}{1.3}
	\node (D) at (0*\braidwidth,1.6){};
	\draw[very thick,lower>] (-2*\braidwidth,0) -- (-2*\braidwidth,1) to[out=90,in=180]  node[pos=.4,inner sep =0] (A) {} node[pos=.2,left]{$*$}  (0,3)to[out=0,in=90]  (2*\braidwidth,1)-- (2*\braidwidth,-1) to[out=-90,in=0] (0,-3) to[out=180, in=-90] (-2*\braidwidth,-1)--cycle;
	\draw[very thick,lower>] (-1.5*\braidwidth,0) -- (-1.5*\braidwidth,1) to[out=90,in=180] node[pos=.4] (B) {}(0,2.6)to[out=0,in=90]  (1.5*\braidwidth,1)-- (1.5*\braidwidth,-1) to[out=-90,in=0] (0,-2.6) to[out=180, in=-90] (-1.5*\braidwidth,-1)--cycle;
	\draw[very thick,lower>] (-\braidwidth,0) -- (-\braidwidth,1) to[out=90,in=180] node[pos=.4] (C) {}node[pos=.6,inner sep=0] (E) {}(0,2.2)to[out=0,in=90]  (\braidwidth,1)-- (\braidwidth,-1) to[out=-90,in=0] (0,-2.2) to[out=180, in=-90] (-\braidwidth,-1)--cycle;
		\draw[rung>] (A) to node[above left,at start]{$i$}  (B)--(C);
	\draw[rung>] (C) to [in=-125, out=-45] (D) to[out=55,in=-45] node [above left,at end,yshift=-1pt,xshift=1pt]{$j$} (E);
		\draw[white,line width=5pt]		(-.5*\braidwidth,1) to[out=90,in=180] (0,1.8) ;
		\draw[very thick,lower>] (-.5*\braidwidth,0) -- (-.5*\braidwidth,1) to[out=90,in=180] (0,1.8) to[out=0,in=90]  (.5*\braidwidth,1)-- (.5*\braidwidth,-1) to[out=-90,in=0] (0,-1.8) to[out=180, in=-90] (-.5*\braidwidth,-1)--cycle;
	\node[inner xsep=35pt,inner ysep=28pt,draw,very thick,fill=white] (box) at (-1.25*\braidwidth,0){$\beta$};
	\end{tikzpicture}\quad =	\quad\begin{tikzpicture}[baseline]
	\newcommand{\braidwidth}{1.5}
	\node (D) at (0*\braidwidth,.7){};
	\draw[very thick,lower>] (-2*\braidwidth,0) -- (-2*\braidwidth,1) to[out=90,in=180]  node[pos=.4,inner sep =0] (A) {} node[pos=.2,left]{$*$}   (0,3) to[out=0,in=90]  (2*\braidwidth,1)-- (2*\braidwidth,-1) to[out=-90,in=0] (0,-3) to[out=180, in=-90] (-2*\braidwidth,-1)--cycle;
	\draw[very thick,lower>] (-1.5*\braidwidth,0) -- (-1.5*\braidwidth,1) to[out=90,in=180] node[pos=.4] (B) {}(0,2.6)to[out=0,in=90]  (1.5*\braidwidth,1)-- (1.5*\braidwidth,-1) to[out=-90,in=0] (0,-2.6) to[out=180, in=-90] (-1.5*\braidwidth,-1)--cycle;
	\draw[very thick,lower>] (-\braidwidth,0) -- (-\braidwidth,1) to[out=90,in=180] node[pos=.4] (C) {}(0,2.2)to[out=0,in=90]  (\braidwidth,1)-- (\braidwidth,-1) to[out=-90,in=0] (0,-2.2) to[out=180, in=-90] node[pos=.6,inner sep=0] (E) {}(-\braidwidth,-1)--cycle;
		\draw[rung>] (A) to node[above left,at start]{$i$}(B)--(C);
	\draw[rung>] (C) to [in=90, out=-30](D) -- (0*\braidwidth,-.7) to[out=-90,in=-45] node [below left,at end,yshift=1pt,xshift=1pt]{$j$} (E);
		\draw[white,line width=5pt]		(-.5*\braidwidth,1) to[out=90,in=180] (0,1.8) ;
		\draw[white,line width=5pt]	(0,-1.8) to[out=180, in=-90] (-.5*\braidwidth,-1);
		\draw[very thick,lower>] (-.5*\braidwidth,0) -- (-.5*\braidwidth,1) to[out=90,in=180] (0,1.8) to[out=0,in=90]  (.5*\braidwidth,1)-- (.5*\braidwidth,-1) to[out=-90,in=0] (0,-1.8) to[out=180, in=-90] (-.5*\braidwidth,-1)--cycle;
	\node[inner xsep=35pt,inner ysep=28pt,draw,very thick,fill=white] (box) at (-1.25*\braidwidth,0){$\beta$};
	\end{tikzpicture} \;.
	\label{eq:slide-around}
\end{equation}

Now, we slide the second half of the light strand upward through the braid to obtain a sum of diagrams $\gamma_{ik}$ for different values of $k$. As discussed in \cref{rem:two-halves}, since we are only sliding half of the light strand through, this sum is exactly described by the matrix $\boldsymbol{\Phi}^R_{\beta}$, as the sum 
\begin{equation}
\ourmu\mathbf{A}_{ij}=\sum_{k=1}^n (\boldsymbol{\Phi}^R_{\beta})_{kj}\quad 
\begin{tikzpicture}[baseline]
	\newcommand{\braidwidth}{1.5}
	\coordinate (D) at (-.14*\braidwidth,1.5);
	\draw[very thick,lower>] (-2*\braidwidth,0) -- (-2*\braidwidth,1) to[out=90,in=180]  node[pos=.4,inner sep =0] (A) {} node[pos=.2,left]{$*$}   (0,3)to[out=0,in=90]  (2*\braidwidth,1)-- (2*\braidwidth,-1) to[out=-90,in=0] (0,-3) to[out=180, in=-90] (-2*\braidwidth,-1)--cycle;
	\draw[very thick,lower>] (-1.5*\braidwidth,0) -- (-1.5*\braidwidth,1) to[out=90,in=180] node[pos=.5] (B) {}node[pos=.2,inner sep=0] (E) {} (0,2.6) to[out=0,in=90]  (1.5*\braidwidth,1)-- (1.5*\braidwidth,-1) to[out=-90,in=0] (0,-2.6) to[out=180, in=-90] (-1.5*\braidwidth,-1)--cycle;
		\draw[rung>](D) to[out=-120,in=0] node [above left,at end]{$k$}  (E);
	\draw[white,line width=5pt]		(-1*\braidwidth,0)--(-1*\braidwidth,1) to[out=90,in=180] (0,2.1) ;
	\draw[very thick,lower>] (-\braidwidth,0) -- (-\braidwidth,1) to[out=90,in=180] node[pos=.6] (C) {}  (0,2.2)to[out=0,in=90]  (\braidwidth,1)-- (\braidwidth,-1) to[out=-90,in=0] (0,-2.2) to[out=180, in=-90] (-\braidwidth,-1)--cycle;
					\draw[rung>] (A) to node[above left,at start]{$i$}(B)--(C);
	\draw[rung>] (C) to [in=60, out=-30](D);
	\draw[white,line width=5pt]	(0,-1.8) to[out=180, in=-90] (-.5*\braidwidth,-1);
	\draw[white,line width=5pt]		(-.5*\braidwidth,0)--(-.5*\braidwidth,1) to[out=90,in=180] (0,1.8) ;
		\draw[very thick,lower>] (-.5*\braidwidth,0) -- (-.5*\braidwidth,1) to[out=90,in=180] (0,1.8) to[out=0,in=90]  (.5*\braidwidth,1)-- (.5*\braidwidth,-1) to[out=-90,in=0] (0,-1.8) to[out=180, in=-90] (-.5*\braidwidth,-1)--cycle;
	\node[inner xsep=35pt,inner ysep=28pt,draw,very thick,fill=white] (box) at (-1.25*\braidwidth,0){$\beta$};
	\end{tikzpicture} \;.
    \label{eq:after-beta}
\end{equation}
This result is very close to $-g\ourmu\mathbf{\hat{A}}\cdot \boldsymbol{\Phi}^R_{\beta}$, but if $k=1$, we have to drag past the star and $-\wri(\beta)$ twists in the heavy strand. Together this is the same as multiplying by $(\boldsymbol{\Lambda}')^{-1}$ by \cref{eq:lambda-local} and the relation:
\begin{equation}
	\begin{tikzpicture}[baseline]
	\draw[rung>] (-.5,0) to [out=-90,in=180] (0,-.45) to[out=0,in=-90] (.5,-.25) to[out=90,in=0] (0,0);
	\draw[white,line width=5pt] (0,-1) --(0,-.1);

		\draw[very thick,mid>] (0,-1) --(0,0) -- (0,1);
		\draw[white,line width=5pt] (1,0) to [out=180,in=0] (0,.35) to[out=180,in=90] (-.5,0);
		\draw[rung>] (1,0) to [out=180,in=0] (0,.35) to[out=180,in=90] (-.5,0);
	\end{tikzpicture}\,\,= -g^{-1}\ourmu^2 \,\,\,\begin{tikzpicture}[baseline]
		\draw[very thick,mid>] (0,-1) --(0,0) -- (0,1);
		\draw[rung>] (1,0) to (0,0);
	\end{tikzpicture} \;.
\end{equation}
This proves \cref{ourCH3}. 

A similar argument applies to \eqref{ourCH2}, sliding the first half of the light strand rather than the second half.  Since we are sliding the first half through the braid, we get the action of $\boldsymbol{\Phi}^L_{\beta}$ on the left, but otherwise the argument is the same.  The equation \eqref{ourCH1} follows from substituting \eqref{ourCH2} into \eqref{ourCH3}.
\end{proof}

\begin{lemma}\label{lem:Z-isomorphism}
	The map $Z$ is an isomorphism.
\end{lemma}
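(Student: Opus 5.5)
We already know $Z$ is surjective, so it remains to show injectivity. The plan is to construct an inverse map $W\colon \ourab K\to \ngab K$ and check that $W\circ Z$ is the identity on generators.

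To build $W$, we first need a presentation of $\ourab K$. By definition, $\ourabq K={}_{T_1}C'\otimes_{\Spidq{\mathbf{m},\boldsymbol{\epsilon}}{N}}C'_{T_2}$. After cutting the closed braid just above $\beta$ and the star, $T_2$ is the braid $\beta$ with its bottom strands capped off, and $T_1$ is the family of nested cups closing the strands. Using the twisted cup modules and \cref{Uq-injective}, each of these modules is cyclic over the (smash product with label-shifts of the) lattice $\Spidq{\mathbf{m},\boldsymbol{\epsilon}}{N}$. Their generator is the empty diagram, and their annihilators are generated by the $q\to1$ limits of the cup relations \crefrange{E-blue-cup}{F-blue-cup}, twisted by $\mathsf{T}_\beta$ on one side.

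At $q=1$ the lattice specializes to a commutative algebra. Its degree-$0$ part, in the grading by label shifts, is a polynomial ring in the $\aij$ over $\Laur1$. This follows from the $q=1$ analogue of \cref{cor:spider-basis} together with the identification of the classical limit of $U_q(\mathfrak{gl}_n)'$ with functions on $GL_n^*$. Hence $\ourab K$ is a quotient of $\freeA\otimes\Laur1$ by the images of the generators of the two annihilator ideals. Moving the label-shift parts to the star via \cref{eq:lambda-local} eliminates the nonzero-weight components: each rung carrying a nonzero shift is rewritten as $\ourlambda^{\pm1}$ times a degree-$0$ element. This reduces everything to the commutative ring $\freeA\otimes\Laur1$ modulo explicit relations.

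Next we identify these relations with \crefrange{ourCH1}{ourCH3}. The right-module relations coming from the nested cups are the statements that a light strand leaving strand $i$ and ending on its cup partner can be evaluated by \crefrange{E-blue-cup}{F-blue-cup}. For an upright light path between strands, a cup (or cap) of the closure at strand $i$ converts a path $\gamma_{*i}$ into a scalar multiple of the diagonal entry, and more generally identifies the two halves of a strand sliding across the closure. On the other side, sliding through $\beta$ produces $\boldsymbol{\Phi}^R_\beta$ or $\boldsymbol{\Phi}^L_\beta$ by \cref{lem:Phi-braid} and \cref{rem:two-halves}. So each defining relation of the tensor product should be exactly an entry of \cref{ourCH2} or \cref{ourCH3}, with $\boldsymbol{\Lambda}'$ arising from the star and the framing twists, precisely as in the proof of surjectivity. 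Reading this dictionary backwards, the map $W$ sending $\aij\mapsto a_{ij}$ kills all defining relations of $\ourab K$, so it is well defined, and $W\circ Z=\mathrm{id}$ on generators.

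The main obstacle is showing that the relations generating the tensor-product annihilator are exhausted by the matrix entries of \crefrange{ourCH2}{ourCH3}. In particular, the Serre-type and commutation relations, and relations involving higher products of rungs or rungs attached to interior heavy strands, must not impose anything new at $q=1$. I would first handle this by using the PBW/lattice basis to see that the cyclic modules are generated by the root-vector relations $\tilde E_{ij},\tilde F_{ij}$ acting on the vacuum. I would then argue that the annihilator of the twisted vacuum is generated by the images under $\mathsf{T}_\beta$ of the root-vector relations, so that only the $n^2$ entries per side, namely $\gamma_{i*}$ and $\gamma_{*j}$ slid once, are needed. The remaining care is sign and framing bookkeeping, which can be cross-checked against the unknot computation from the introduction.
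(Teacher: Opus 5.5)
Your overall strategy, constructing an inverse $\ourab K\to\ngab K$ and checking it on monomials, matches the paper's. However, the step that carries all of the content is only asserted. You write that each defining relation of the tensor product ``should be'' an entry of \eqref{ourCH2} or \eqref{ourCH3}. You also defer the claim that the annihilators of the two cup modules are generated by ($\mathsf{T}_\beta$-twisted) root-vector relations. Without that claim nothing rules out extra relations in $\ourab K$, so injectivity is not established.

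That claim is also not routine:
\begin{itemize}
\item The relations \eqref{E-blue-cup}--\eqref{F-blue-cup} only evaluate a light strand running directly across a single cup. For the outer nested caps, and for strands ending on the right-hand uprights, you also need the isotopy relation \eqref{eq:cup-isotope} and framing corrections.
\item The $q=1$ PBW basis of $\Spidp{\mathbf{m},\boldsymbol{\epsilon}}{N}$ describes the algebra, not these modules, so it does not produce generators of an annihilator.
\item Passing from the polynomial ring in all $2n(2n-1)$ diagrams $\aij$ to $\freeA$ (only $i,j\le n$) requires eliminating the generators with an index $>n$ using the module structure. That is the same missing input.
\item Even if the modules were known to be free, your route would need a further spanning argument to turn freeness into a generating set for the annihilators.
\end{itemize}

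The missing idea is to avoid presentations altogether, as the paper does:
\begin{itemize}
\item Since ${}_{T_1}C'$ and $C'_{T_2}$ are limits of induced modules, they have bases ${}_{\mathbf{p},a}v$ and $v_{\mathbf{p}',a'}$. Each basis vector is a monomial $a$ in the $a_{ij}$ placed on the leftmost $n$ strands above the braid, together with labels $m+p_1,\dots,m+p_n$ on the left ends.
\item Define $\tilde Y$ on pairs of basis vectors to be $0$ when the labels do not match and $\ourlambda^{-p_1}aa'$ otherwise.
\item By the universal property of the tensor product, $\tilde Y$ descends to $\ourab K$ once it is balanced. Balancedness only has to be checked against the generators $\aij$, $1\le i,j\le 2n$, since it then propagates to products.
\end{itemize}
In particular, the Serre and commutation relations you were worried about never enter. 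They already hold in $\Spidq{\mathbf{m},\boldsymbol{\epsilon}}{N}$, and both factors are genuine modules over it.

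The four cases are:
\begin{itemize}
\item $i,j\le n$: immediate from the definition.
\item $i\le n<j$: exactly \eqref{ourCH3}, via the sliding computation from the surjectivity proof split at the cut.
\item $j\le n<i$: \eqref{ourCH2}.
\item $n<i,j$: \eqref{ourCH1}.
\end{itemize}
With this, $Y$ and $Z$ are visibly inverse on monomials.
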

\begin{proof}
	To prove this, we need only construct an inverse map $Y\colon \ourab K\to \ngab K.$
  By the universal property of tensor products, to construct $Y$ it is enough to construct a $\Spidq{\mathbf{m},\boldsymbol{\epsilon}}{N}$-bilinear map $  \tilde{Y}\colon {}_{T_1} C' \times C'_{T_2}\to \ngab K$ where ${}_{T_1}C'$ is the right module for the top tangle $T_1$ with $n$ nested caps (only maxima), and $C'_{T_2}$ is the left module for the bottom tangle $T_2$ with $n$ nested cups (only minima), with the left ends of $T_2$ braided by $\beta$.  The tangles are shown below with a representative light strand.
	\begin{equation}
	 T_1=\begin{tikzpicture}[baseline=30]
	\newcommand{\braidwidth}{1.3}
			\draw[rung>] (-2*\braidwidth,.5) to node[above left,at start]{$i$}  (
			-.5*\braidwidth,.5);
		\draw[white,line width=5pt]		(-1.5*\braidwidth,1) -- (-1.5*\braidwidth,0) ;
				\draw[white,line width=5pt]		(-1*\braidwidth,1) -- (-1*\braidwidth,0) ;
	\draw[very thick,lower>] (-2*\braidwidth,0) -- (-2*\braidwidth,1) to[out=90,in=180]  node[pos=.4,inner sep =0] (A) {} node[pos=.2,left]{$*$}  (0,3)to[out=0,in=90]  (2*\braidwidth,1)-- (2*\braidwidth,0) ;
	\draw[very thick,lower>] (-1.5*\braidwidth,0) -- (-1.5*\braidwidth,1) to[out=90,in=180] node[pos=.4] (B) {}(0,2.6)to[out=0,in=90]  (1.5*\braidwidth,1)-- (1.5*\braidwidth,0);
	\draw[very thick,lower>] (-\braidwidth,0) -- (-\braidwidth,1) to[out=90,in=180] node[pos=.4] (C) {}node[pos=.6,inner sep=0] (E) {}(0,2.2)to[out=0,in=90]  (\braidwidth,1)-- (\braidwidth,0);
		\draw[very thick,lower>] (-.5*\braidwidth,0) -- (-.5*\braidwidth,1) to[out=90,in=180] (0,1.8) to[out=0,in=90]  (.5*\braidwidth,1)-- (.5*\braidwidth,0);
	\end{tikzpicture}\qquad\qquad  T_2=	\begin{tikzpicture}[baseline=-30]
	\newcommand{\braidwidth}{1.3}
\draw[rung>] (-2*\braidwidth,.5) to node[above left,at start]{$i$}  (
			-.5*\braidwidth,.5);
	\node (D) at (0*\braidwidth,1.6){};
	\draw[very thick,lower>]  (2*\braidwidth,1)-- (2*\braidwidth,-1) to[out=-90,in=0] (0,-3) to[out=180, in=-90] (-2*\braidwidth,-1)-- (-2*\braidwidth,1);
	\draw[very thick,lower>]  (1.5*\braidwidth,1)-- (1.5*\braidwidth,-1) to[out=-90,in=0] (0,-2.6) to[out=180, in=-90] (-1.5*\braidwidth,-1)--(-1.5*\braidwidth,1);
	\draw[very thick,lower>]  (\braidwidth,1)-- (\braidwidth,-1) to[out=-90,in=0] (0,-2.2) to[out=180, in=-90] (-\braidwidth,-1)--(-\braidwidth,1) ;
	\draw[very thick,lower>]  (.5*\braidwidth,1)-- (.5*\braidwidth,-1) to[out=-90,in=0] (0,-1.8) to[out=180, in=-90] (-.5*\braidwidth,-1)--(-.5*\braidwidth,1) ;
	\node[inner xsep=35pt,inner ysep=18pt,draw,very thick,fill=white] (box) at (-1.25*\braidwidth,-.75){$\beta$};
	\end{tikzpicture}
	\end{equation}
	 Since ${}_{T_1}C'$ is a limit of an induction, we can give it a basis by 
	 \begin{enumerate}
	 \item Multiplying on the leftmost $n$ strands by a monomial in $a_{ij}$ for $i\neq j$, as shown in the picture above.
	 	\item Fixing the labels on the left ends of the strands to be $m+p_1, \dots, m+p_n$; the labels on the right ends are determined by the pattern of the light strands.
	 \end{enumerate}
	 Similarly, we can give a basis to $C'_{T_2}$ by similar multiplication and fixing labels.  Let us emphasize that this is done above the braid.  For $a\in \freeA$ and $\mathbf{p}$, let ${}_{\mathbf{p},a}v$ be the corresponding vector in ${}_{T_1}C'$ and $v_{\mathbf{p},a}\in C'_{T_2}$.
	 
	 Thus, we can define the map $\tilde{Y}$ by taking the unique $\Laur{1}$-linear map such that  $\tilde{Y}({}_{\mathbf{p},a}v,{v}_{\mathbf{p'},a'})=0$ if the labels on the bottom of ${}_{\mathbf{p},a}v$ don't match those on top of $ {v}_{\mathbf{p'},a'}$, and $\tilde{Y}({}_{\mathbf{p},a}v,{v}_{\mathbf{p'},a'})$ is the image of $\ourlambda^{-p_1} aa'$ in $\ngab{K}$ otherwise.   
	 
	 Now we only need to confirm $\Spidq{\mathbf{m},\boldsymbol{\epsilon}}{N}$-bilinearity. That is, we have to prove the compatibility \[\tilde{Y}({}_{\mathbf{p},a}v\cdot \gamma_{ij},{v}_{\mathbf{p'},a'})=\tilde{Y}({}_{\mathbf{p},a}v,\gamma_{ij}\cdot {v}_{\mathbf{p'},a'}),\] with multiplication by $\gamma_{ij}$ for $i,j\in \{1,\dots, 2n\}$.
	 \begin{enumerate}
	 	\item If $i,j\leq n$, then this is manifest from the definition.  
   	\item If $i\leq n <j$, then this follows from \cref{ourCH3}.  The diagram $\gamma_{ij}$ is an intermediate step in the proof of this equality; the first half uses equalities in ${}_{T_1} C'$ and the second half equalities in $C'_{T_2}$.
   	\item If $j\leq n <i$, then this follows by the same argument as for \cref{ourCH2}.
   	\item If $n<i,j$, then this follows by the same argument from \cref{ourCH1}.\qedhere
	 \end{enumerate}
   Therefore, $\tilde{Y}$ descends to a well-defined map $Y\colon \ourab K\to \ngab K$. By construction, $Y$ and $Z$ are inverse to each other on the generators, so they are inverse isomorphisms.
\end{proof}
This completes the proof of \cref{main-theorem}.

\subsection{Links}
\label{sec:links}
If, instead of a knot, we consider a link $L=L_1\cup \cdots \cup L_r$ with multiple components, then the results of the paper continue with minor changes.  Let us list here the changes that need to be made to carry this out in the case of links to compare with the knot contact homology defined for links in \cite[Appendix]{ngTopologicalIntroduction2014}. \begin{enumerate}[wide]
	\item The definition of $\ourab{L}$ remains unchanged, but we can now independently choose the label $m_i$ on the different components, which we number in order from left to right in terms of where we encounter their leftmost strand.  Thus, the vector space $\ourab{L}$ is now a module over the ring $\Laur{r}$.  
	\item We now add the correct number of twists to produce the zero-framing of each component on the leftmost strand lying in that component, just above the braid as before.  
  \item We now define the integer $d(i)$ to be the smallest non-negative integer such that $\beta^{d(i)}(i)$ is the leftmost strand of its component, and we define the automorphism $\Psi$ by
  \[\Psi(a_{ij})=(-g)^{d(i)-d(j)}\frac{\ourmu_i}{\ourmu_j}a_{ij}.\]
	\item With this modification, the relations \crefrange{ourCH1}{ourCH3} hold for the diagonal matrix $\boldsymbol{\Lambda}'$ whose entries are $1$ except for those corresponding to the leftmost strand of each component, which are $\ourlambda_i^{-1}\ourmu_i^{-2\wri(L_i)}(-g)^{\wri(L_i)}$, where $\wri(L_i)$ is the writhe of the diagram of $L_i$ obtained by forgetting all other components. 
	\item The map $Y\colon \ourab{L}\to \ngab{L}$ is well-defined by the same arguments as in \cref{lem:Z-isomorphism}.
\end{enumerate}
Unfortunately, the map $Y$ is not manifestly surjective. For each component $L_i$, we let $k_i$ be the number of light strands beginning on the $i$th component minus the number of strands ending on it. The vector space $\ngab{L}$ is graded by $\Z^r$, where the grading of a diagram is $(k_1,\dots, k_r)$. Since $\sum k_i=0$, this grading is trivial if $r=1$, but for $r>1$, it is not clear to us whether it is trivial. Thus, applying the same arguments again shows that:
\begin{theorem}\label{th:link-iso}
	The map $Y\colon \ourab{L} \to \ngab{L}$ is an isomorphism to the degree $0$ subspace of $\ngab{L}$.
\end{theorem}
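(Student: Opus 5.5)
Following the knot case, I would build the maps $Z\colon \freeA\to\ourab{L}$ and $\tilde Y\colon {}_{T_1}C'\times C'_{T_2}\to\ngab{L}$ in parallel and show that they are mutually inverse onto the stated subspace.

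\textbf{Step 1: the grading.} I would first make the $\Z^r$-grading on $\ngab{L}$ precise. Give $a_{ij}$ degree $e_{c(i)}-e_{c(j)}$, where $c(i)$ is the component containing strand $i$, and give all scalars in $\Laur{r}$ degree $0$. I would check that the braid action $\phi_{\sigma_k}$ is homogeneous for this grading, which holds because $\sigma_k$ permutes strands compatibly with components. Then every entry $(\boldsymbol{\Phi}^L_\beta)_{ij}$ is homogeneous of degree $e_{c(\beta(i))}-e_{c(j)}$, by the same induction as in the lemma describing $\Psi$. Hence the relations \crefrange{ourCH1}{ourCH3}, twisted by the modified $\Psi$ and $\boldsymbol{\Lambda}'$, are homogeneous, and the grading descends to $\ngab{L}$.

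\textbf{Step 2: $Y$ lands in degree $0$.} I would define $\tilde Y({}_{\mathbf p,a}v, v_{\mathbf p',a'})=\prod_i\ourlambda_i^{-p_{\ell_i}}\,aa'$ when the labels match, where $\ell_i$ is the leftmost strand of $L_i$. The label-matching condition around the closed braid says that the net number of light-strand endpoints on each component vanishes, since otherwise the labels on $L_i$ could not close up. So $aa'$ has degree $0$, and the image of $Y$ lies in $\ngab{L}_0$. Bilinearity follows from the four-case check of \cref{lem:Z-isomorphism}, now using the link versions of \crefrange{ourCH1}{ourCH3} listed in items (3)--(4).

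\textbf{Step 3: surjectivity onto $\ngab{L}_0$ and the inverse.} Take a degree-$0$ monomial $a=\prod a_{i_sj_s}$ in $\freeA$. I would realise it as a diagram $Z(a)$: insert the light strands $\gamma_{i_sj_s}$ above the braid, label the heavy strands at each star by $m_{i}$, and propagate the labels along each component. Degree $0$ is exactly the condition that the propagated labels return to themselves, so $Z(a)$ is a well-defined element of $\ourab{L}$. Each time a label is propagated past a star, I would insert the corresponding power of $\ourlambda_i$ via \cref{eq:lambda-local}. I expect this step to be the main obstacle. The difficulty is that $Z$ is only defined on $\freeA_0$, so showing that it descends to $\ngab{L}_0$ needs care: the kernel of $\freeA_0\to\ngab{L}_0$ is $J\cap\freeA_0$, where $J$ is the ideal of relations, and because $J$ is homogeneous this equals $\sum_d \freeA_{-d}J_d$. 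I would therefore check that $Z$ kills products $b\cdot r$ with $b\in\freeA_{-d}$ and $r$ a relation entry of degree $d$. The plan is to reuse the sliding argument of \cref{eq:slide-around,eq:after-beta}, applied to one light strand inside an ambient diagram whose labels already balance. This works because the sliding moves are local and never require the global labels to close up except at the end.

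\textbf{Step 4: conclusion.} Finally, $Y\circ Z$ and $Z\circ Y$ are the identity on the generators: on diagrams built from $\gamma_{ij}$ above the braid on one side, and on degree-$0$ monomials on the other. Since ${}_{T_1}C'\otimes C'_{T_2}$ is spanned by such diagrams by the basis description, both compositions are the identity. This gives the isomorphism $\ourab{L}\cong\ngab{L}_0$.
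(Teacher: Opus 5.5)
Your proposal is correct and follows the paper's argument, just made explicit. $Y$ comes from the same bilinear map $\tilde Y$ as in \cref{lem:Z-isomorphism}, and label-matching around each closed component forces $aa'$ into degree $0$ of the $\Z^r$-grading by net light-strand endpoints; the sliding argument, run inside label-balanced diagrams (your description of $J\cap\freeA_0$ via homogeneous generators), then gives the inverse on the degree-$0$ part, which is what the paper compresses into ``applying the same arguments again.''

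One small fix in Step 1: an individual $\phi_{\sigma_k}$ does not preserve the fixed component grading when strands $k$ and $k+1$ lie on different components (e.g.\ $a_{k+1,i}\mapsto a_{ki}$). Homogeneity of the relations should instead be derived, as you go on to do, from the path-monomial form of the entries of $\boldsymbol{\Phi}^L_\beta$ and $\boldsymbol{\Phi}^R_\beta$ together with $c\circ\beta=c$.
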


\begin{remark}
  It has been noted previously that the spectrum of $\ngab{L}$ can have dimension that is too large for a link; for example, \cite[\S 4.4]{gaoAugmentationsSheaves2021} describes a 4-dimensional component for the 3-component unlink.  It would be interesting to consider whether this degree 0 subspace points the way to a better definition for links that has the expected dimension.
\end{remark}

\subsection{Comparison with the augmentation ideal}\label{sec:augmentation}

\cref{th:link-iso} shows that the annihilator of the class $[L]$ in $\ourab{L}$, viewed as an ideal in $\Laur{r}$, is the augmentation ideal $\mathsf{Au}_L$.  However, it is not clear that this equals the ideal $I(L)$ obtained by base-changing $\Iq{L}$; we cannot rule out the possibility that $\mathsf{Au}_L$ is larger.
It is helpful to study this via the exact sequence of $\C[q^{\pm 1}]$-modules
\[0\to \qt{r}/\Iq{L}\to \ourabq{L}\to \ourabq{L}/\qt{r}\cdot [L]\to 0.\]
The universal coefficient theorem then gives a six-term exact sequence:
\begin{multline*}
    0\to\Tor_{q-1}(\qt{r}/\Iq{L})\to\Tor_{q-1}(\ourabq{L})\to \Tor_{q-1}(\ourabq{L}/\qt{r}\cdot [L])\overset{\partial}\to
    \\   
    \Laur{r}/I{(L)}\to \ourab{L}\to \ourab{L}/(\Laur{r}\cdot [L])\to 0.
\end{multline*}
where $\Tor_{q-1}(M)=\{m\in M\mid(q-1)m=0\}$ for any $\C[q^{\pm 1}]$-module $M$.
The quotient $\mathsf{Au}_L/I(L)$ is the image of the connecting homomorphism $\partial$.  We expect that this connecting homomorphism $\partial$ is 0 for all links $L$.  Combined with \cref{conj:ev-injective}, this would imply that:
\begin{conjecture}\label{conj:augmentation-ideal}
We have equalities $\Iq{L}=\Iqp{L}$ and $I(L) = \mathsf{Au}_L$ for all links $L$. That is, the augmentation ideal $\mathsf{Au}_L$ is the classical limit of the ideal defining the recursion relations on antisymmetric HOMFLYPT polynomials.

In particular, the vanishing set of the augmentation ideal $\mathsf{Au}_L$ is Lagrangian in $(\C^{\times})^{2r}$.
\end{conjecture}
In both cases, our ability to prove this conjecture is limited by our poor understanding of the structure of the module $\ourabq{L}$.  If we knew, for example, that this module was free as a $\C[q^{\pm 1},\ourmu_i^{\pm 1}]$-module of known rank, then our task would be considerably simplified.

\bigskip
\IndexOfNotation
\printbibliography

\end{document}